\newcommand{\al}{\alpha}
\newcommand{\ga}{\gamma}
\newcommand{\de}{\delta}
\newcommand{\la}{\lambda}
\newcommand{\om}{\omega}
\newcommand{\eps}{\varepsilon}
\newcommand{\iy}{\infty}
\theoremstyle{plain}
\numberwithin{equation}{section}
\newtheorem{thm}{Theorem}[section]
\newtheorem{lem}[thm]{Lemma}
\newtheorem{prop}[thm]{Proposition}
\newtheorem{cor}[thm]{Corollary}
\theoremstyle{definition}
\newtheorem{ip}[thm]{Inverse Problem}
\newtheorem{df}[thm]{Definition}
\theoremstyle{remark}
\DeclareMathOperator*{\Res}{Res}
\DeclareMathOperator{\rank}{rank}
\DeclareMathOperator{\diag}{diag}
\DeclareMathOperator{\Ran}{Ran}
\begin{document}

\begin{center}
{\large\bf Spectral data characterization for the Sturm-Liouville operator on the star-shaped graph}
\\[0.2cm]
{\bf Natalia P. Bondarenko} \\[0.2cm]
\end{center}

\vspace{0.5cm}

{\bf Abstract.} The inverse spectral problems are studied for the Sturm-Liouville operator on the star-shaped graph and for the matrix Sturm-Liouville operator with the boundary condition in the general self-adjoint form. We obtain necessary and sufficient conditions of solvability for these two inverse problems, and also prove their local solvability and stability.
  
\medskip

{\bf Keywords:} inverse spectral problem; Sturm-Liouville operator on graph; differential operators on graphs; quantum graphs; spectral data characterization; local solvability; stability; method of spectral mappings.

\medskip

{\bf AMS Mathematics Subject Classification (2010):} 34A55 34B07 34B09 34B45 34L40 47E05

\vspace{1cm}

\section{Introduction}

The paper aims to give spectral data characterization for the Sturm-Liouville operator on a geometrical graph. Differential operators on graphs, also called quantum graphs, are used for modeling wave propagation in structures, consisting of thin tubes, strings, beams, etc. Such models appear in organic chemistry, mechanics, nanotechnology, theory of waveguides and other applications (see \cite{Nic85, LLS94, Kuch02, GS06, BCFK06, BK13, PPP04} and references therein).

Inverse spectral problems, that consist in recovering differential operators on graphs from their spectral characteristics, have been studied by many scholars (see \cite{Piv00, KS00, GS01, Har02, Bel04, KN05, BW05, Yur05, BV06, Piv07, CP07, AK08, FIY08, TMM06, Kur10, Yang10, ALM10, AKN10, EK12, BF13, Ign15, Yur16, BS17, MT17, XY18}). The results of those studies generalize the classical results of inverse problem theory for ordinary differential operators on intervals (see the monographs \cite{Mar77, Lev84, PT87, FY01}). The majority of the papers on inverse problems for quantum graphs concern the second-order (also called Sturm-Liouville or Schr\"odinger) differential operators. On the one hand, such operators are easier for investigation, on the other hand, they are natural for applications. 

For quantum graphs, there are three types of inverse problems, that consist in reconstruction of the following characteristics: 

\begin{enumerate}
\item Coefficients of differential expressions (e.g., Sturm-Lioville potentials) on the graph edges (see \cite{Piv00, Har02, Bel04, BW05, Yur05, BV06, Piv07, AK08, FIY08, Yang10, Kur10, BF13, Ign15, Yur16, BS17, MT17, XY18}).
\item Graph structure (see \cite{GS01, KN05, Kur10, ALM10}). 
\item Boundary conditions (see \cite{KS00, AKN10, EK12}). 
\end{enumerate}

In this paper, we focus on a problem of the first type. For recovering coefficients of differential expressions on graphs, two constructive methods made the most significant impact. The first of them is the BC-method, developed by Belishev and his successors (see \cite{Bel04, BV06, AK08, ALM10}). That method allowed them to solve inverse problems on arbitrary trees (graphs without cycles) and recover not only operator coefficients, but also a graph structure. The second approach is based on the method of spectral mappings (see \cite{Yur05, FIY08, BF13, Ign15, Yur16, BS17}). Relying on that method, Yurko and other mathematicians have solved inverse spectral problems for differential operators on arbitrary compact graphs (see \cite{Yur16}) and inverse spectral-scattering problems on noncompact graphs (see \cite{BF13, Ign15, Yur16}). There were also attempts to apply the methods of Marchenko (see \cite{Mar77, AM63}) to inverse scattering problems for special types of graphs with infinite rays (see \cite{Har02, TMM06, MT17, XY18}). Nevertheless, although there is a significant number of studies on inverse problems for differential operators on graphs, they concern only uniqueness theorems and constructive algorithms for solution. The question of spectral data characterization remained open even for the following operator on the simplest star-shaped graph.

In this paper, we consider the geometrical graph $G$ with the vertices $\{ v_j \}_{j = 0}^m$ and the edges $\{ e_j \}_{j = 1}^m$. Every edge $e_j$ connects the vertices $v_0$ and $v_j$, $j = \overline{1, m}$, i.e. $v_0$ is the internal vertex, and $\{ v_j \}_{j = 1}^m$ are the boundary vertices. For $j = \overline{1, m}$, we associate with the edge $e_j$ the interval $[0, \pi]$ and a parameter $x_j \in [0, \pi]$, so that $x_j = 0$ corresponds to the boundary vertex $v_j$ and $x_j = \pi$ corresponds to the internal vertex $v_0$. 

Consider the system of the Sturm-Liouville equations on the star-shaped graph $G$:
\begin{equation} \label{eqv-g}
    \ell_j y_j := -y_j''(x_j) + q_j(x_j) y_j(x_j) = \la y_j(x_j), \quad x_j \in (0, \pi), \quad j = \overline{1, m},
\end{equation}
with the Dirichlet conditions at the boundary vertices
\begin{equation} \label{bc-g}
    y_j(0) = 0, \quad j = \overline{1, m},
\end{equation}
and the following matching conditions at the internal vertex
\begin{equation} \label{mc-g}
    y_1(\pi) = y_j(\pi), \quad j = \overline{1, m}, \quad
    \sum_{j = 1}^m (y_j'(\pi) - h y_j(\pi)) = 0.
\end{equation}

Here $q_j$, $j = \overline{1, m}$, are real-valued functions from $L_2(0, \pi)$, called {\it the potentials}, and $h \in \mathbb R$.

Introduce the spaces
\begin{gather*}
L_2(G) := \{ y = [y_j]_{j = 1}^m \colon y_j \in L_2(0, \pi), \, j = \overline{1, m} \}, \\
W_2^2(G) := \{ y = [y_j]_{j = 1}^m \colon y_j, y_j' \in AC[0, \pi], \, y_j'' \in L_2(0, \pi), \, j = \overline{1, m} \}.
\end{gather*}

The boundary value problem \eqref{eqv-g}-\eqref{mc-g} defines the self-adjoint operator $\mathcal L$ in $L_2(G)$, acting by the rule $\mathcal L y = [\ell_j y_j]_{j = \overline{1, m}}$ and having the domain
$$
\mathcal D(\mathcal L) := \{ y \in W_2^2(G) \colon y \: \text{satisfies (1.2), (1.3)} \}.
$$

It is well-known that the operator $\mathcal L$ has a purely discrete spectrum, consisting of real eigenvalues.

\begin{df} \label{def:eig}
Let $\{ \la_{nk} \}_{n \ge 1, \, k = \overline{1,m}}$ be the eigenvalues of $\mathcal L$, numbered in the nondecreasing order: $\la_{n_1, k_1} \le \la_{n_2, k_2}$, if $(n_1, k_1) < (n_2, k_2)$, i.e. $n_1 < n_2$ or $n_1 = n_2$, $k_1 = k_2$. Multiple eigenvalues occur in the sequence $\{ \la_{nk} \}_{n \ge 1, \, k = \overline{1, m}}$ several times, according to their multiplicities. It is convenient to number the eigenvalues by two indices $n$ and $k$ because of the asymptotic formulas \eqref{asymptla}.
\end{df}

\begin{df} \label{def:Weyl}
For $k = \overline{1, m}$, we introduce the vector function $\Phi_k(x, \la) = [\phi_{kj}(x, \la)]_{j = 1}^m$, satisfying equations~\eqref{eqv-g} for $x_j = x$, $j = \overline{1, m}$, the matching conditions~\eqref{mc-g} and the following boundary conditions:
$$
    \phi_{kk}(0, \la) = 1, \quad \phi_{kj}(0, \la) = 0, \quad k, j = \overline{1, m}, \: k \ne j.
$$
Let $\Phi(x, \la) = [\phi_{kj}(x, \la)]_{k, j = 1}^m$ be the matrix with the columns $\Phi_k(x, \la)$. The matrix function $M(\la) := \Phi'(0, \la)$ is called {\it the Weyl matrix} of the boundary value problem \eqref{eqv-g}-\eqref{mc-g}.
\end{df}

Weyl matrices and Weyl functions are natural spectral characteristics for recovering differential operators of various types (see, e.g., \cite{BW05, Yur16, FIY08, Ign15, Mar77, FY01, Xu19}).

The matrix functions $\Phi(x, \la)$ and $M(\la)$ are meromorphic in the $\la$-plane. All their poles are simple and coincide with the the eigenvalues $\{ \la_{nk} \}_{n \ge 1, \, k = \overline{1, m}}$ (see \cite{Xu19, Bond19-sp}).
Thus, we define the weight matrices
\begin{equation} \label{defal}
    \al_{nk} := -\Res_{\la = \la_{nk}} M(\la), \quad n \ge 1, \quad k = \overline{1, m}.
\end{equation}

The collection $\mathcal S := \{ \la_{nk}, \al_{nk} \}_{n \ge 1, \, k = \overline{1, m}}$ is called {\it the spectral data} of $\mathcal L$. This paper is devoted to the following inverse spectral problem.

\begin{ip} \label{ip:graph}
Given the spectral data  $\mathcal S$, construct the potentials $\{ q_j \}_{j = 1}^m$ and the coefficient $h$.
\end{ip}

The uniqueness of Inverse Problem~\ref{ip:graph} solution follows, in particular, from the results of \cite{BW05, Yur05, Xu19, Bond19-alg}. In the papers \cite{Yur05, FIY08, Bond19-alg}, a constructive solution of this inverse problem has been developed, based on the method of spectral mappings \cite{FY01}.

In this paper, we obtain necessary and sufficient conditions of solvability for Inverse Problem~\ref{ip:graph}. In other words, we provide spectral data characterization for the Sturm-Liouville operator $\mathcal L$ on the star-shaped graph. Moreover, local solvability and stability of Inverse Problem~\ref{ip:graph} are proved.

The question of necessary and sufficient is the most important issue of inverse problem theory and usually is the most complicated one. For differential operators on graphs, this question has not been solved before. Complicated structural properties and the behavior of the spectrum cause significant difficulties in spectral data characterization for quantum graphs.
Some results in this directions were obtained by Pivovarchik \cite{Piv00,Piv07}. However, for reconstruction of the operator, Pivovarchik used spectra corresponding to separate edges of the graph but not to the whole graph. Local solvability means that the solution of inverse problem still exists under a sufficiently small perturbation of the spectral data. Local solvability is closely related with stability, which is essential for justification of numerical methods for solving inverse problems. 

Our approach is based on representation of the boundary value problem \eqref{eqv-g}-\eqref{mc-g} in the equivalent matrix form:
\begin{gather} \label{eqv}
    -Y''(x) + Q(x) Y(x) = \la Y(x), \quad x \in (0, \pi), \\ \label{bc}
    Y(0) = 0, \quad V(Y) := T (Y'(0) - H Y(0)) - T^{\perp} Y(0) = 0,
\end{gather}
where $Y(x) = [y_j(x)]_{j = 1}^m$ is a vector function, $Q(x) = \diag\{ q_j(x) \}_{j = 1}^m$ is the diagonal matrix, and
\begin{equation} \label{defT}
T = [T_{jk}]_{j, k = 1}^m, \quad T_{jk} = \tfrac{1}{m}, \: j, k = \overline{1, m}, \quad
T^{\perp} = I - T, \quad H = h T.
\end{equation}
The symbol $I$ denotes the $m \times m$ unit matrix. We denote the problem~\eqref{eqv}-\eqref{bc} by $L = L(Q(x), T, H)$.

In addition, we study the problem $L(Q(x), T, H)$ in {\it the general form}, where 
\begin{itemize}
    \item $Q(x)$ is an arbitrary Hermitian matrix function with the elements from $L_2(0, \pi)$;
    \item $T$ is an arbitrary orthogonal projector, $1 \le \rank(T) \le m-1$, $T^{\perp} = I - T$;
    \item $H$ is a Hermitian matrix, such that $H = T H T$. 
\end{itemize}
The case, when $Q(x)$ is diagonal and \eqref{defT} is fulfilled, is called {\it the graph case}.

Note that the condition $V(Y) = 0$ turns into the Dirichlet condition $Y(\pi) = 0$ in the case $T = 0$ and into the Robin condition $Y'(\pi) - H Y(\pi) = 0$ in the case $T = I$. In the latter two degenerated cases, our main results remain valid, but the proofs require technical modifications. Therefore we suppose that $1 \le \rank(T) \le m - 1$. 

For the problem $L$ in the general case, we define {\it the Weyl solution} $\Phi(x, \la)$ as the matrix solution of equation~\eqref{eqv}, satisfying the conditions $\Phi(0, \la) = I$, $V(\Phi) = 0$, and {\it the Weyl matrix} as follows: $M(0, \la) = \Phi'(0, \la)$. Clearly, these definitions generalize Definition~\ref{def:Weyl} for the graph case. The weight matrices in the general case are defined by the formula~\eqref{defal}. Along with Inverse Problem~\ref{ip:graph}, we investigate the following general matrix inverse problem.

\begin{ip} \label{ip:matr}
Given the spectral data $\mathcal S$, find $Q(x)$, $T$ and $H$.
\end{ip}

The most complete investigation of inverse problems has been carried out for the matrix Sturm-Liouville equation~\eqref{eqv} with the Dirichlet boundary conditions $Y(0) = Y(\pi) = 0$ and the Robin boundary conditions $Y'(0) - H_1 Y(0) = 0$, $Y'(\pi) + H_2 Y(\pi) = 0$ instead of \eqref{bc}. Here $H_1$ and $H_2$ are $m \times m$ matrices. In \cite{CK09, MT10, Bond12, Bond19-tamkang}, spectral data characterization has been provided for those matrix Sturm-Liouville operators. Nevertheless, operators with general self-adjoint boundary conditions appeared to be more difficult for investigation. There are only uniqueness results for recovering the matrix Sturm-Liouville operator with the both boundary conditions in the form similar to $V(Y) = 0$ from spectral characteristics (see \cite{Xu19}). In the recent study \cite{Bond19-alg}, a constructive method for solving Inverse Problem~\ref{ip:matr} has been developed. We also mention that the inverse scattering problem for the matrix Sturm-Liouville operator on the half-line with the Dirichlet boundary condition at $x = 0$ was solved in \cite{AM63}. Harmer \cite{Har02} generalized the results of \cite{AM63} to the case of general self-adjoint boundary condition analogous to $V(Y) = 0$. In addition, Harmer \cite{Har02} studied the inverse scattering problem for the Sturm-Liouville operator on the star-shaped graphs with infinite rays. However, the operators considered in \cite{AM63, Har02} have a finite number of eigenvalues, so the difficulties related to spectral data asymptotics do not arise. Therefore inverse scattering problems for matrix Sturm-Liouville operators on infinite domains appear to be easier for investigation than inverse spectral problems on a finite interval.

In this paper, we obtain necessary and sufficient conditions of solvability for Inverse Problem~\ref{ip:matr} for the general matrix case and, in parallel, for Inverse Problem~\ref{ip:graph} for the Sturm-Liouville operator on the graph. Furthermore, local solvability and stability are proved for the both problems. Note that our necessary and sufficient conditions (Proposition~\ref{prop:nc}, Theorems~\ref{thm:sc} and~\ref{thm:scg}) generalize \cite[Theorem~1.6.2]{FY01} for the scalar Sturm-Liouville operator on a finite interval. Similarly, local solvability and stability Theorems~\ref{thm:loc} and~\ref{thm:locg} generalize \cite[Theorem~1.6.4]{FY01}. However, these generalizations are far from being trivial. The main difficulty in our research is caused by complicated behavior of the spectrum. The spectrum of the problem $L$ can contain an infinite number of groups of multiple and/or asymptotically multiple eigenvalues, that influences the structure of the weight matrices. In order to overcome this difficulty, we group the eigenvalues by asymptotics and investigate the sums of the weight matrices, corresponding to each group.

Our analysis relies on the basic ideas of the method of spectral mappings (see \cite{FY01}). A crucial step of this method is contour integration in the complex plane of the spectral parameter. As a result, a nonlinear inverse problem is reduced to a linear equation in a Banach space. Investigation of matrix Sturm-Liouville operators requires essential development of this method. We construct a special Banach space of infinite matrix sequences, by relying on our eigenvalue grouping, and then investigate solvability of the main equation in that Banach space.
    
The paper is organized as follows. Section~2 contains preliminaries. We provide asymptotic formulas for the eigenvalues $\{ \la_{nk} \}$ and for the weight matrices $\{ \al_{nk} \}$. Then Inverse Problem~\ref{ip:matr} is reduced to the so-called main equation in an appropriate Banach space. In Section~3, we formulate necessary and sufficient conditions of solvability for Inverse Problems~\ref{ip:graph} and~\ref{ip:matr}. The proofs are provided in the next three sections. In Section~4, auxiliary asymptotics and estimates are obtained. In Section~5, we investigate solvability of the main equation. In Section~6, using the solution of the main equation, we construct the operator and show that its spectral data coincide with the initially given numbers. In Section~7, local solvability and stability theorems are provided. 

\section{Preliminaries}

The goal of this section is to provide preliminary results from~\cite{Bond19-sp, Bond19-asympt, Bond19-alg}. In particular, Propositions~\ref{prop:asymptla} and~\ref{prop:asymptal} give asymptotic formulas for the eigenvalues and the weight matrices, respectively. Further the special Banach space $B$ of infinite matrix sequences is constructed, and Inverse Problem~\ref{ip:matr} is reduced to the main equation~\eqref{main} in $B$. In the construction of $B$, an important role is played by the grouping $\{ G_k \}_{k \ge 1}$ of the square roots of the eigenvalues. 

First of all, we introduce the {\bf notations.}

\begin{itemize}
    \item The prime denotes differentiation by $x$ in expressions similar to $Y'(x, \la)$.
    \item The symbol $\dagger$ denotes the conjugate transpose, i.e. for a matrix $A = [a_{jk}]_{j, k = 1}^m$ we have $A^{\dagger} = [\bar a_{kj}]_{j, k = 1}^m$.
    \item The spaces of complex-valued $m$-vectors and $m \times m$ matrices are denoted by $\mathbb C^{m}$ and $\mathbb C^{m \times m}$, respectively. In these spaces, we use
    the Euclidean vector norm and the induced matrix norm: $\| A \| = \sqrt{ \la_{max} (A^{\dagger} A)}$, where $\la_{max}$ is the maximal eigenvalue.
    \item For any interval $\mathbb I \subset \mathbb R$, we denote by $L_2(\mathbb I; \mathbb C^m)$ and $L_2(\mathbb I; \mathbb C^{m \times m})$ the spaces of $m$-vector functions and $m \times m$ matrix functions, respectively, having elements from $L_2(\mathbb I)$. For example, $Q \in L_2((0, \pi); \mathbb C^{m \times m})$. 
    \item The scalar product and the norm in the Hilbert space $L_2(\mathbb I; \mathbb C^{m \times m})$ are defined as follows:
$$
(Y, Z) = \int_{\mathbb I} Y^{\dagger}(x) Z(x) \, dx, \quad \| Y \| = \sqrt{(Y, Y)}, \quad
Y = [y_j(x)]_{j = 1}^m, \quad Z = [z_j(x)]_{j = 1}^m.
$$
    \item In $L_2(\mathbb I; \mathbb C^{m \times m})$, the following norm is used:
$$
\| A \|_{L_2} = \max_{1 \le j, k \le m} \left( \int_{\mathbb I} |a_{jk}(x)|^2 \right)^{1/2}, \quad A(x) = [a_{jk}(x)]_{j, k = 1}^m.
$$
    \item The matrix Wronskian is denoted by $\langle Y(x), Z(x)\rangle := Y(x) Z'(x) - Y'(x) Z(x)$, where $Y$ and $Z$ are $m \times m$ matrix functions.
    \item In estimates, we use the same symbol $C$ for various constants, independent of $x$, $\la$, $n$, etc.
    \item The notation $\{ K_n \}$ is used for various matrix sequences, such that $\{ \| K_n \| \} \in l_2$.
    \item $\la = \rho^2$, $\tau := \mbox{Im}\,\rho$.
\end{itemize}

Below we suppose that the problem $L = L(Q(x), T, H)$ is of the general from, unless the opposite is stated. Denote $p := \rank (T)$, then $\rank(T^{\perp}) = m - p$. In the general case, $1 \le p \le m - 1$. In the graph case, we have $p = 1$ according to \eqref{defT}. 

Denote
\begin{gather} \nonumber
\Omega := \frac{1}{2} \int_0^{\pi} Q(x) \, dx, \\ \label{defP12}
\mathcal P_1(z) := z^{p-m} \det(z I - T (\Omega - H) T), \quad
\mathcal P_2(z) := z^{-p} \det (z I - T^{\perp} H T^{\perp} ).
\end{gather}

One can easily show that $\mathcal P_1(z)$ and $\mathcal P_2(z)$ are polynomials of degrees $p$ and $(m - p)$, respectively, whose roots are real. Denote the roots of $\mathcal P_1(z)$ by $\{ z_k \}_{k = 1}^p$ and the roots of $\mathcal P_2(z)$ by $\{ z_k \}_{k = p + 1}^m$, counting with the multiplicities and in the nondecreasing order: $z_k \le z_{k + 1}$ for $k = \overline{1, m} \backslash \{ p \}$. 

In the graph case, we have $\Omega = \diag\{ \om_j \}_{j = 1}^m$, $\om_j := \frac{1}{2} \int_0^{\pi} q_j(x) \, dx$, $j = \overline{1, m}$,
\begin{equation} \label{defz1}
z_1 = \frac{1}{m} \sum_{j = 1}^m \om_j - h,
\end{equation}
and $\{ z_j \}_{j = 2}^m$ are the roots of the polynomial $\mathcal P_2(z)$, which takes the form
\begin{equation} \label{defP2}
    \mathcal P_2(z) = \frac{1}{m} \frac{d}{dz} \left( \prod_{j = 1}^m (z - \om_j) \right)
\end{equation}

Let $\{ \la_{nk} \}_{n \ge 1, \, k = \overline{1, m}}$ be the eigenvalues of $L$, numbered according to Definition~\ref{def:eig}. Put $\rho_{nk} := \sqrt{\la_{nk}}$, $n \ge 1$, $k = \overline{1, m}$. The following proposition gives the asymptotic formulas for the eigenvalues.

\begin{prop} \label{prop:asymptla}
The following relations hold
\begin{equation} \label{asymptla}
\arraycolsep=1.4pt\def\arraystretch{1.8}
\left.\begin{array}{ll}
\rho_{nk} = n - \dfrac{1}{2} + \dfrac{z_k}{\pi n}+ \dfrac{\varkappa_{nk}}{n},  & \quad k = \overline{1, p}, \\
\rho_{nk} = n + \dfrac{z_k}{\pi n} + \dfrac{\varkappa_{nk}}{n}, & \quad 
k = \overline{p + 1, m},
\end{array} \right\}
\end{equation}
where $n \ge 1$, $\{ \varkappa_{nk} \} \in l_2$.
\end{prop}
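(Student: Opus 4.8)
The eigenvalues of $L = L(Q(x), T, H)$ are the zeros of the characteristic function $\Delta(\la) = \det V(\Psi(x,\la))$, where $\Psi(x,\la)$ is the $m \times m$ matrix solution of \eqref{eqv} satisfying $\Psi(0,\la) = 0$, $\Psi'(0,\la) = I$; thus $\Psi(x,\la) = S(x,\la)$, the matrix sine-type solution. The plan is to derive a sufficiently precise asymptotic expansion of $\Delta(\rho^2)$ as $|\rho| \to \infty$ and then apply Rouch\'e's theorem to locate its zeros. First I would invoke the standard transformation (Volterra-type integral equation / successive approximations for the matrix equation \eqref{eqv}) to get, uniformly on $\{|\rho - n| \ge \delta\}$, the asymptotics
\begin{gather*}
S(x,\la) = \frac{\sin \rho x}{\rho} I + \frac{\cos \rho x}{\rho^2}\, \frac{1}{2}\int_0^x Q(t)\,dt + o\!\left(\frac{e^{|\tau| x}}{\rho^2}\right), \\
S'(x,\la) = \cos \rho x \cdot I + \frac{\sin \rho x}{\rho}\, \frac{1}{2}\int_0^x Q(t)\,dt + o\!\left(\frac{e^{|\tau| x}}{\rho}\right),
\end{gather*}
evaluated at $x = \pi$, with $\Omega = \frac12\int_0^\pi Q$. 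Substituting into $V(S) = T(S'(\pi,\la) - H S(\pi,\la)) - T^{\perp} S(\pi,\la)$ and taking the determinant, I expect $\Delta(\rho^2)$ to split, modulo lower-order terms, into a product governed by the action of $T$ and of $T^{\perp}$: on $\Ran T$ (dimension $p$) the leading behaviour is $\cos \rho\pi$ (from $S'(\pi)$), while on $\Ran T^{\perp}$ (dimension $m-p$) it is $\dfrac{\sin \rho\pi}{\rho}$ (from $S(\pi)$), which is exactly why the two families in \eqref{asymptla} sit near the half-integers $n - \tfrac12$ and near the integers $n$, respectively.

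The key computation is to extract the next-order correction. Writing $\rho\pi = \pi n - \tfrac\pi2 + \xi$ (resp. $\rho\pi = \pi n + \xi$) with $\xi$ small, one expands $\cos\rho\pi = \pm\sin\xi \approx \pm\xi$ and $\sin\rho\pi = \pm\cos\xi \approx \pm 1$ (resp. the reverse), and the determinant factorizes — after conjugating by the orthogonal decomposition $I = T \oplus T^{\perp}$ and using $H = THT$ — into a piece behaving like $\det\big(\xi\, I_p - \tfrac1\rho T(\Omega - H)T|_{\Ran T}\big)$ times a piece behaving like $\det\big(\xi\, I_{m-p} - \tfrac1\rho T^{\perp} H T^{\perp}|_{\Ran T^{\perp}}\big)$, up to factors of $\rho$ and $o(1/\rho)$ errors. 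Comparing with the definitions \eqref{defP12} of $\mathcal P_1(z)$ and $\mathcal P_2(z)$, whose roots are precisely $\{z_k\}_{k=1}^p$ and $\{z_k\}_{k=p+1}^m$, one reads off that the zeros of $\Delta$ near $\pi n$ satisfy $\xi = \dfrac{z_k}{\pi n} + o\!\left(\tfrac1n\right)$, i.e. $\rho_{nk}\pi = \pi n - \tfrac\pi2 + \dfrac{z_k}{n} + o\!\left(\tfrac1n\right)$ for $k = \overline{1,p}$ and similarly for $k = \overline{p+1,m}$, which is \eqref{asymptla}.

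To make this rigorous I would: (i) fix a horizontal strip and, for large $n$, apply Rouch\'e on small circles around each $\pi n - \tfrac\pi2$ and $\pi n$ (or around clusters, when some $z_k$ coincide) comparing $\Delta(\rho^2)$ with its leading term, concluding that $\Delta$ has exactly $p$ (resp. $m-p$) zeros, counted with multiplicity, in each such neighbourhood; (ii) refine the localization by Rouch\'e again on shrinking circles to get the $z_k/(\pi n)$ term; (iii) promote the $o(1/n)$ error to an $\ell_2$ error $\varkappa_{nk}/n$ by tracking the $o(e^{|\tau|\pi}/\rho^2)$ remainders in $S, S'$, which are of the form $\tfrac1\rho \cdot (\text{Fourier-type coefficient})$ with the coefficients forming an $\ell_2$ sequence in $n$ — this is the standard mechanism by which $L_2$ potentials produce $\ell_2$ corrections, and it requires the Riemann–Lebesgue-type bound $\int_0^\pi Q(t)\cos 2\rho_{nk}(\pi - t)\,dt = o(1)$ sharpened to $\ell_2$ via Bessel's inequality. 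The main obstacle is step (ii)–(iii) in the presence of asymptotically multiple eigenvalues: when two roots $z_k = z_{k+1}$ (or a cluster) collide, the simple scalar Rouch\'e argument must be replaced by a vector/matrix version, and one must argue that the $p$-fold (or $(m-p)$-fold) determinantal factor has its zeros within $C/n^{2}$ of the cluster — equivalently, one perturbs the matrix $T(\Omega-H)T$ by an $O(1/n) + o(1/n)$ term and controls how its eigenvalues move; since $\{z_k\}$ are merely the eigenvalues of a Hermitian matrix, continuity of the (unordered) spectrum under Hermitian perturbation (Weyl's inequality) delivers the needed uniform bound, and the $\ell_2$-summability of the perturbation's correction yields $\{\varkappa_{nk}\} \in \ell_2$. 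All of this parallels, and is quoted from, \cite{Bond19-asympt}; here I would only sketch the determinantal factorization and the Rouch\'e step, citing that reference for the full argument.
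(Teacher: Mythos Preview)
The paper itself does not prove Proposition~\ref{prop:asymptla}; it simply cites \cite{Bond19-asympt} for the general case and \cite{MP15} for the graph case. Your sketch follows the standard route of those references --- asymptotic expansion of $S(\pi,\la)$, $S'(\pi,\la)$, determinantal analysis of $V(S)$, Rouch\'e localization, and the $L_2\Rightarrow\ell_2$ mechanism --- so in spirit you are reproducing exactly what the paper defers to.

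There is, however, a concrete computational slip that would make your second family come out wrong. You claim the factor on $\Ran T^{\perp}$ is governed by $\det\bigl(\xi\,I_{m-p} - \tfrac{1}{\rho}\,T^{\perp} H T^{\perp}|_{\Ran T^{\perp}}\bigr)$. But $H = THT$, so $T^{\perp} H T^{\perp} = 0$, and your formula would force $z_k = 0$ for all $k = \overline{p+1,m}$. The correct matrix is $T^{\perp}\Omega T^{\perp}$, as in the definition \eqref{defP12} of $\mathcal P_2(z)$. The reason is that near $\rho\pi = \pi n$ one has $\sin\rho\pi \approx (-1)^n \xi$ and $\cos\rho\pi \approx (-1)^n$, so in $-T^{\perp} S(\pi,\la)$ the leading contribution on the $T^{\perp}$ block comes from the pair
\[
-T^{\perp}\Bigl(\tfrac{\sin\rho\pi}{\rho}\,I - \tfrac{\cos\rho\pi}{\rho^2}\,\Omega\Bigr)
\;\sim\; -\tfrac{(-1)^n}{\rho}\bigl(\xi\,I - \tfrac{1}{\rho}\,\Omega\bigr),
\]
whose determinant on $\Ran T^{\perp}$ vanishes when $\xi\rho$ is an eigenvalue of $T^{\perp}\Omega T^{\perp}$, not of $T^{\perp} H T^{\perp}$. (There is also a sign slip in your expansion of $S(x,\la)$: the $\cos\rho x/\rho^2$ term carries a minus sign, which is exactly why $\Omega$ survives here.) With this correction the rest of your outline --- block triangularization modulo $o(1/\rho)$, Rouch\'e on shrinking circles, Weyl's inequality for clustered roots, and the Bessel-inequality upgrade to $\{\varkappa_{nk}\}\in\ell_2$ --- is sound.
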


Proposition~\ref{prop:asymptla} has been proved in \cite{Bond19-asympt} for the general case and in~\cite{MP15} for the graph case.

In order to provide asymptotic formulas for the weight matrices $\{ \al_{nk} \}_{n \in \mathbb N, \, k = \overline{1, m}}$, we need some additional notations.

\begin{df} \label{def:al}
Consider in the sequence $\{ \la_{nk} \}_{n \ge 1, \, k = \overline{1, m}}$ any group of multiple eigenvalues $\la_{n_1, k_1} = \la_{n_2, k_2} = \dots = \la_{n_r, k_r}$, $(n_j, k_j) < (n_{j + 1}, k_{j + 1})$, $j = \overline{1, r-1}$, maximal by inclusion. Obviously, we have $\al_{n_1, k_1} = \al_{n_2, k_2} = \dots = \al_{n_r, k_r}$. Define $\al'_{n_1, k_1} := \al_{n_1, k_1}$ and $\al'_{n_j, k_j} := 0$ for $j = \overline{2, r}$. Defining the matrices $\al'_{nk}$ for every group of multiple eigenvalues in such a way, we get the sequence $\{ \al'_{nk} \}_{n \ge 1, \, k = \overline{1, m}}$.
\end{df}

Introduce the sums
\begin{gather*}
\al_n^I = \sum_{k = 1}^p \al'_{nk}, \quad \al_n^{II} = \sum_{k = p + 1}^m \al'_{nk}, \\
\al_n^{(s)} = \sum_{\substack{k = 1 \\ z_k = z_s}}^p \al'_{nk}, \: s = \overline{1, p}, \qquad
\al_n^{(s)} = \sum_{\substack{k = p+1 \\ z_k = z_s}}^{m} \al'_{nk}, \: s = \overline{p + 1, m}.
\end{gather*}

\begin{prop} \label{prop:asymptal}
The following relations hold
\begin{equation} \label{asymptal}
\arraycolsep=1.4pt\def\arraystretch{1.8}
\left.\begin{array}{c}
\al_n^I = \frac{2(n - 1/2)^2}{\pi} \Bigl( T + \frac{K_n}{n} \Bigr), \quad
\al_n^{II} = \frac{2 n^2}{\pi} \Bigl( T^{\perp} + \frac{K_n}{n}\Bigr), \\
\al_n^{(s)} = \frac{2 n^2}{\pi} (A^{(s)} + K_n), \quad s = \overline{1, m},
\end{array} \right\}
\end{equation}
where 
\begin{gather*}
A^{(s)} = U^{\dagger} T_s U, \quad s = \overline{1, m}, \\
T_s = [T_{s, jk}]_{j, k = 1}^m, \quad
T_{s, jk} = \begin{cases}
                1, \quad j = k, \quad z_j = z_s, \quad j, s \le p \quad \text{or} \quad j,s > p, \\
                0, \quad \text{otherwise},
             \end{cases}
\end{gather*}
and $U$ is a unitary matrix, such that
$$
U \Theta U^{\dagger} = \diag\{ z_k \}_{k = 1}^m, \quad
\Theta := T (\Omega - H) T + T^{\perp} \Omega T^{\perp}.
$$
The matrices $\{ A^{(s)} \}_{s = 1}^m$ do not depend on the choice of $U$.

In the graph case, the following relations hold
\begin{gather} \label{defAs}
A^{(1)} = T, \quad A^{(s)} = \frac{1}{m} \Res_{z = z_s} \frac{A(z)}{\mathcal P_2(z)}, \quad s = \overline{2, m},  \\ \label{defA}
A(z) = [a_{jk}]_{j,k = 1}^m, \quad a_{jj}(z) =\frac{d}{dz} \left( \prod_{\substack{s = 1 \\ s \ne j}}^m (z - \om_s) \right), 
\quad 
a_{jk} = -\prod_{\substack{s = 1 \\ s \ne j, k}}^m (z - \om_s), \quad j \ne k.
\end{gather}
\end{prop}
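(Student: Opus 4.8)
The plan is to derive the asymptotics of the weight matrices from the asymptotics of the eigenvalues (Proposition~\ref{prop:asymptla}) together with an explicit representation of the residues of the Weyl matrix. The starting point is the contour-integral / residue machinery of the method of spectral mappings: one has a representation of $M(\la)$ via fundamental solutions of \eqref{eqv}, and $\al_{nk} = -\Res_{\la = \la_{nk}} M(\la)$ is computed from the characteristic function and the solution matrices evaluated at the eigenvalues. Concretely, I would write the Weyl matrix in the form $M(\la) = -\Delta(\la)^{-1}\,\mathcal{N}(\la)$ where $\Delta(\la)$ is the characteristic matrix built from the boundary functional $V$ applied to the solution $C(x,\la)$ of \eqref{eqv} with $C(0,\la)=0$, $C'(0,\la)=I$, and $\mathcal N(\la)$ is the analogous object built from the solution with $S(0,\la)=I$. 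Then at a pole $\la_{nk}$ the residue $\al_{nk}$ is expressed through $\ker V(C(\cdot,\la_{nk}))$ and the derivative $\dot\Delta(\la_{nk})$. The large-$\la$ asymptotics of $C(x,\la)$, $S(x,\la)$ are the standard transformation-operator expansions $C(x,\la) = \frac{\sin\rho x}{\rho} I + \frac{\cos\rho x}{\rho^2}\,\Omega(x) + \cdots$ with $\Omega(x) = \frac12\int_0^x Q$, uniformly on vertical strips.

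Next I would group the eigenvalues exactly as in Definition~\ref{def:al} and in the sums $\al_n^I$, $\al_n^{II}$, $\al_n^{(s)}$: since $\rho_{nk} = n - \tfrac12 + O(1/n)$ for $k \le p$ and $\rho_{nk} = n + O(1/n)$ for $k > p$, the square roots within one index $n$ cluster near $n-\tfrac12$ and near $n$ respectively, and within each cluster they split further according to the distinct values of $z_k$. The key point is that, after substituting the eigenvalue asymptotics \eqref{asymptla} into the residue formula, the leading $\frac{2n^2}{\pi}$ (resp. $\frac{2(n-1/2)^2}{\pi}$) factor appears from $\dot\Delta$ and from $(\sin\rho x/\rho)^{-2}$-type terms evaluated at $x=\pi$, while the matrix structure of the leading coefficient is governed by the spectral projections of the matrix $\Theta = T(\Omega-H)T + T^\perp\Omega T^\perp$ onto its eigenspaces. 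Summing $\al'_{nk}$ over a whole $z_s$-cluster produces the spectral projector of $\Theta$ onto the $z_s$-eigenspace, which is precisely $A^{(s)} = U^\dagger T_s U$; summing over all $k\le p$ collapses the cluster near $n-\tfrac12$ to $T$ (since $\sum_{s\le p} T_s = T$ in the $U$-rotated frame, and the $z_k$-dependence contributes only an $O(1/n)$ correction), and similarly the $k>p$ part collapses to $T^\perp$. The $\{K_n\}\in l_2$ error terms are tracked by carrying the $\varkappa_{nk}/n$ remainders from \eqref{asymptla} and the $L_2$-type remainders from the transformation-operator expansions through the (smooth, invertible near the unperturbed values) algebraic operations.

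For the graph case I would specialize: here $p=1$, so $\al_n^{(1)} = \al_n^I$ and $A^{(1)} = T$ follows from the general formula; for $s\ge 2$ the matrix $\Theta$ restricted to the range of $T^\perp$ is $T^\perp\Omega T^\perp$ with $\Omega = \diag\{\om_j\}$, and the eigenvalue-problem $\det(zI - T^\perp\Omega T^\perp)=0$ after the normalization in \eqref{defP12} becomes $\mathcal P_2(z)=0$ with $\mathcal P_2$ as in \eqref{defP2}. The associated spectral projectors are computed by the classical residue formula for the resolvent of a matrix, $(zI - A)^{-1} = \operatorname{adj}(zI-A)/\det(zI-A)$, whose residue at a simple root $z_s$ is the rank-one (or higher, for multiple roots) projector; identifying $\operatorname{adj}(zI - T^\perp\Omega T^\perp)$ — restricted appropriately — with the matrix $A(z)$ of \eqref{defA} gives $A^{(s)} = \frac1m\Res_{z=z_s} A(z)/\mathcal P_2(z)$. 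This last identification is the one genuinely computational step: one must check that the cofactors of $zI - \diag\{\om_j\}$, projected by $T^\perp$, reproduce the entries $a_{jj}(z) = \frac{d}{dz}\prod_{s\ne j}(z-\om_s)$ and $a_{jk}(z) = -\prod_{s\ne j,k}(z-\om_s)$, the derivative appearing because $T^\perp = I - T$ introduces the averaging that differentiates the product — this is a short linear-algebra verification using $T_{jk}=1/m$.

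The main obstacle, as the authors themselves flag in the introduction, is the possible presence of infinitely many groups of (asymptotically) multiple eigenvalues: one cannot analyze each $\al_{nk}$ individually because the spacing within a group can be $O(1/n)$ or smaller, making individual residues blow up while their sums stay bounded. The resolution — and the technically delicate part of the argument — is to work only with the grouped sums $\al_n^I,\al_n^{II},\al_n^{(s)}$ from the outset: one applies the residue calculus to the sum $\sum_{k} \al'_{nk}$ over a cluster directly, i.e. integrates $M(\la)$ over a single small contour enclosing the whole cluster rather than individual eigenvalues, so that the $\Delta(\la)^{-1}$ singularities that would individually be simple poles are handled collectively. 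Making this rigorous requires uniform (in $n$) control of $\Delta(\la)^{-1}$ on these shrinking-but-separated contours, which in turn rests on the sharp eigenvalue separation encoded in the distinct $z_s$-clusters of \eqref{asymptla}; everything downstream is then a matter of bookkeeping the $l_2$ remainders.
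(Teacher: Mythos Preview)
The paper does not prove Proposition~\ref{prop:asymptal}: it is stated in the Preliminaries section and immediately attributed to the references \cite{Bond19-asympt} (general case) and \cite{Bond19-sp} (graph case), with no argument given in the present paper. So there is nothing to compare your proposal against here.

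That said, your outline is the correct strategy and is essentially what those cited papers do: represent the Weyl matrix through the characteristic matrix $V(S(\cdot,\la))$ and a cosine-type solution, compute residues of $M(\la)$ via contour integrals around eigenvalue \emph{clusters} rather than individual eigenvalues (precisely to avoid the blow-up from asymptotically multiple eigenvalues), insert the refined solution asymptotics, and identify the leading matrix coefficients with the spectral projectors of $\Theta$. A couple of small points: you have the initial conditions of your $C$ and $S$ swapped relative to the paper's conventions (in this paper $S(0,\la)=0$, $S'(0,\la)=I$), which is harmless but worth fixing; and the sharper $l_2$-remainder for $\al_n^I,\al_n^{II}$ (with the extra $1/n$ inside the parentheses) compared to $\al_n^{(s)}$ genuinely requires one more term in the expansion of $V(S(\pi,\la))$ and a cancellation when summing over \emph{all} $s\le p$ (resp.\ $s>p$) --- this is the one place where your sketch is a bit light, since the cancellation uses $\sum_{s} z_s A^{(s)} = \Theta$ restricted to $\Ran T$ (resp.\ $\Ran T^{\perp}$), not merely $\sum_s A^{(s)}=T$.
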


Proposition~\ref{prop:asymptal} has been proved in \cite{Bond19-asympt} for the general case and in~\cite{Bond19-sp} for the graph case.

Further we need the main equation of Inverse Problem~\ref{ip:matr}, derived in~\cite{Bond19-alg}. Consider a model problem $\tilde L = L(\tilde Q(x), \tilde T, \tilde H)$ of the same form as $L$, but with different coefficients. We agree that, if a certain symbol $\ga$ denotes an object related to $L$, the symbol $\tilde \ga$ with tilde denotes the analogous object related to $\tilde L$. Let $\tilde L$ be such that $\tilde T = T$ and $\tilde \Theta = \Theta$. In particular, one can put $\tilde Q(x) := \frac{2}{\pi} \Theta$, $\tilde T := T$, $\tilde H = 0$. A detailed algorithm for constructing the problem $\tilde L$ by using the spectral data is provided in~\cite{Bond19-alg}.

In the graph case, it is convenient to choose the model problem $\tilde L$ with the diagonal potential matrix. Suppose that we know the mean values $\{ \om_j \}_{j = 1}^m$. Then we put 
\begin{equation} \label{modelg}
\tilde Q(x) := \frac{2}{\pi} \Omega, \quad \tilde T := T, \quad \tilde H := \tilde h T, \quad \tilde h = \frac{1}{m} \sum\limits_{j = 1}^m \om_j - z_1.
\end{equation}

Denote by $S(x, \la)$ the $m \times m$ matrix solution of equation~\eqref{eqv}, satisfying the initial conditions $S(0, \la) = 0$, $S'(0, \la) = I$. Define
\begin{equation} \label{defD}
D(x, \la, \mu) := \frac{\langle S^{\dagger}(x, \bar \la), S(x, \mu) \rangle}{\la - \mu}.
\end{equation}

Without loss of generality, we can assume that $\la_{nk} \ge 0$ and $\tilde \la_{nk} \ge 0$ for all $n \in \mathbb N$, $k = \overline{1, m}$. One can easily achieve these conditions by a shift of the spectrum: $\la \mapsto \la + C$, $Q(x) \mapsto Q(x) + C I$, where $C$ is a constant. Introduce the notations:
\begin{gather*}
    \la_{nk0} = \la_{nk}, \quad \la_{nk1} = \tilde \la_{nk}, \quad \rho_{nk0} = \rho_{nk}, \quad \rho_{nk1} = \tilde \rho_{nk}, \\
    \al_{nk0} = \al_{nk}, \quad \al_{nk1} = \tilde \al_{nk}, \quad
    \al'_{nk0} = \al'_{nk}, \quad \al'_{nk1} = \tilde \al'_{nk}, \\
    S_{nks}(x) = S(x, \la_{nks}), \quad \tilde S_{nks}(x) = \tilde S(x, \la_{nks}), \quad n \ge 1, \: k = \overline{1, m}, \: s = 0, 1.
\end{gather*}

We group the square roots $\{ \rho_{nks} \}_{n \ge 1, \, k = \overline{1, m}, \, s = 0, 1}$ of the eigenvalues into the collections:
\begin{equation} \label{defG}
G_1 := \{ \rho_{nks} \}_{n = \overline{1, n_0}, \, k = \overline{1, m}, \, s = 0, 1}, \quad
G_{2j} := \{ \rho_{n_0 + j, ks} \}_{k = \overline{1,p}, \, s = 0, 1}, \quad
G_{2j+1} := \{ \rho_{n_0 + j, ks} \}_{k = \overline{p + 1, m}, \, s = 0, 1},
\end{equation}
for $j \ge 1$. 
Each collection $G_n$ is a multiset, i.e. it can contain multiple values.
In view of the asymptotics~\eqref{asymptla}, one can choose $n_0 \ge 1$, such that $G_n \cap G_k = \varnothing$ for all $n \ne k$.

For any finite multiset $\mathcal G$ of complex numbers, we define the finite-dimensional space $B(\mathcal G)$. The space $B(\mathcal G)$ consists of all the matrix functions $f \colon \mathcal G \to \mathbb C^{m \times m}$ with the property: if $\rho, \theta \in \mathcal G$ and $\rho = \theta$, then $f(\rho) = f(\theta)$. The norm in $B(\mathcal G)$ is defined as follows:
$$
   \| f \|_{B(\mathcal G)} = \max \left\{ \max_{\rho \in \mathcal G} \| f(\rho) \|, \max_{\substack{\rho \ne \theta \\ \rho, \theta \in \mathcal G}} 
   |\rho - \theta|^{-1} \| f(\rho) - f(\theta) \| \right\}.
$$

Define the Banach space $B$ of infinite sequences:
\begin{equation} \label{defB}
B = \{ f = \{ f_n \}_{n \ge 1} \colon f_n \in B(G_n), \, n \ge 1, \, \| f \|_B < \iy \}, \quad \| f \|_B := \sup_{n \ge 1} (n \| f_n\|_{B(G_n)}).
\end{equation}
One can easily show, that the following sequence $\psi(x)$ belongs to $B$ for each fixed $x \in [0, \pi]$:
$$
\psi(x) = \{ \psi_n(x) \}_{n \ge 1}, \quad \psi_n(x)(\rho) = S(x, \rho^2), \quad \rho \in G_n, \quad n \ge 1.
$$
Analogously $\tilde \psi(x) \in B$ is defined, by changing $S(x, \rho^2)$ to $\tilde S(x, \rho^2)$.

For each fixed $x \in [0, \pi]$, we also define the linear operator $R(x) \colon B \to B$, acting on any element $f = \{ f_n \}_{n \ge 1}$ of $B$ as follows:
\begin{gather*}
    (f R(x))_n = \sum_{k = 1}^{\iy} f_k R_{k, n}(x), \quad R_{k, n}(x) \colon B(G_k) \to B(G_n), \\ 
    (f_k R_{k, n}(x))(\rho) = \sum_{(l, j) \colon \rho_{lj0}, \rho_{lj1} \in G_k} (f_k (\rho_{lj0}) \al'_{lj0} D(x, \rho^2_{lj0}, \rho^2) - 
    f_k(\rho_{lj1}) \al'_{lj1} D(x, \rho_{lj1}^2, \rho^2)), \quad \rho \in G_n.
\end{gather*}

In the latter expressions, the operators are put to the right of their operands to emphasize that the matrices are multiplied in this order. 
Similarly, the operator $\tilde R(x)$ is defined, by changing $S$, $D$ to $\tilde S$, $\tilde D$.
The operators $R(x)$ and $\tilde R(x)$ are compact on $B$. Furthermore, the following {\it main equation} is satisfied for each fixed $x \in [0, \pi]$:
\begin{equation} \label{main}
    \tilde \psi(x) = \psi(x) (\mathcal I + \tilde R(x)),
\end{equation}
where $\mathcal I$ is the identity operator in $B$. The main equation~\eqref{main} can be used for constructive solution of Inverse Problems~\ref{ip:graph} and~\ref{ip:matr} (see \cite{Bond19-alg} for details).

We call an element $f = \{ f_n \}_{n \ge 1} \in B$ {\it diagonal}, if for every $n \ge 1$, all the values of the matrix function $f_n \colon G_n \to \mathbb C^{m \times m}$ are diagonal matrices. In the graph case, the matrix function $S(x, \la)$ is diagonal, so the element $\psi(x) \in B$ is diagonal for all $x \in [0, \pi]$.

\section{Spectral Data Characterization}

In this section, we formulate necessary and sufficient conditions of solvability for Inverse Problems~\ref{ip:graph} and~\ref{ip:matr}.

Let $SD$ be the class of all the data in the form $\{ \la_{nk}, \, \al_{nk} \}_{n \ge 1, \, k = \overline{1, m}}$, such that: 
\begin{enumerate}
    \item $\la_{nk}$ are real numbers, $\al_{nk}$ are Hermitian, nonnegative definite matrices: $\al_{nk} = \al_{nk}^{\dagger} \ge 0$ for all $n \ge 1$, $k = \overline{1, m}$;
    \item if $\la_{n_1, k_1} = \la_{n_2, k_2}$, then $\al_{n_1, k_1} = \al_{n_2, k_2}$. Moreover, $\rank (\al_{nk})$ equals the multiplicity of the corresponding value $\la_{nk}$ (i.e. the number of times that $\la_{nk}$ occurs in the sequence).
\end{enumerate}

\begin{df} \label{def:F}
Following Definition~\ref{def:al}, consider any group of multiple eigenvalues $\la_{n_1, k_1} = \la_{n_2, k_2} = \dots = \la_{n_r, k_r}$ maximal by inclusion. We have $\rank(\al_{n_1, k_1}) = r$. In other words, $\Ran(\al_{n_1, k_1})$ is an $r$-dimensional subspace in $\mathbb C^m$. Choose in $\Ran(\al_{n_1, k_1})$ an orthonormal basis $\{ \mathcal E_{n_j, k_j} \}_{j = \overline{1, m}}$. (This choice can be not unique). Thus, the sequence of normalized vectors $\{ \mathcal E_{nk} \}_{n \ge 1, \, k = \overline{1, m}}$ is defined. Further we need the following sequence of vector functions
$$
\mathscr F := \left\{ \mathcal E_{nk} \frac{\sin (\rho_{nk} t)}{\rho_{nk}}\right\}_{n \ge 1, \, k = \overline{1, m}}.
$$
\end{df}

The results of \cite{Bond19-sp, Bond19-asympt, Bond19-alg} yield the following necessary conditions on the spectral data.

\begin{prop}[Necessity] \label{prop:nc}
The spectral data $\mathcal S := \{ \la_{nk}, \al_{nk} \}_{n \ge 1, \, k = \overline{1, m}}$ of the problem $L$ in the general form fulfill the following conditions.
\begin{enumerate}
    \item $\mathcal S \in SD$.
    \item \textsc{(Asymptotics)} The eigenvalues~$\{ \la_{nk} \}_{n \ge 1, \, k = \overline{1, m}}$ and the weight matrices $\{ \al_{nk} \}_{n \ge 1, \, k = \overline{1, m}}$ satisfy the asymptotic relations~\eqref{asymptla} and~\eqref{asymptal}, respectively, where $\{ z_k \}_{k = 1}^m$ are the roots of the polynomials $\mathcal P_1(z)$ and $\mathcal P_2(z)$ defined by~\eqref{defP12}, and $\{ A^{(s)} \}_{s = 1}^m$ are the matrices defined in Proposition~\ref{prop:asymptal}.
    \item \textsc{(Completeness)} The sequence $\mathscr F$ is complete in $L_2((0, \pi); \mathbb C^{m})$ for any choice of the vectors $\{ \mathcal E_{nk} \}_{n \ge 1, \, k = \overline{1, m}}$ in Definition~\ref{def:F}.
    \item \textsc{(Solvability)} The main equation \eqref{main} is uniquely solvable for each $x \in [0, \pi]$.
\end{enumerate}

In the graph case, the solution of the main equation~\eqref{main} is diagonal \textsc{(Diagonality)}.
\end{prop}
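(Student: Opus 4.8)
The plan is to verify each of the listed properties by appealing to the preliminary results quoted in Section~2, since the statement asserts only that the spectral data of an \emph{actual} problem $L$ must satisfy these conditions. The first two items are essentially a restatement of earlier facts. For item~1, one checks that $\mathcal S \in SD$: the eigenvalues $\la_{nk}$ are real because $\mathcal L$ (equivalently $L$) is self-adjoint, and the weight matrices $\al_{nk} = -\Res_{\la = \la_{nk}} M(\la)$ are Hermitian and nonnegative definite with rank equal to the multiplicity of $\la_{nk}$ because $M(\la)$ is a Herglotz-type matrix function whose residues at poles are the spectral projections (up to scaling) associated to the corresponding eigenspaces; this is the content of the cited references \cite{Xu19, Bond19-sp}. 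The consistency $\la_{n_1,k_1} = \la_{n_2,k_2} \Rightarrow \al_{n_1,k_1} = \al_{n_2,k_2}$ is immediate from the fact that $\al_{nk}$ depends only on the eigenvalue $\la_{nk}$ and not on the index. Item~2 is simply Propositions~\ref{prop:asymptla} and~\ref{prop:asymptal} applied to $L$.

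For item~3 (Completeness), I would argue via the eigenfunction expansion. The operator $\mathcal L$ has a purely discrete spectrum, and the corresponding eigenvectors form an orthogonal basis of $L_2(G) \cong L_2((0,\pi); \mathbb C^m)$. It remains to identify, for each eigenvalue $\la_{nk}$ of multiplicity $r$, an orthonormal family in the eigenspace whose members are proportional to $\mathcal E_{n_j,k_j}\,\dfrac{\sin(\rho_{nk} t)}{\rho_{nk}}$; this follows because a solution of $-y'' = \la y$, $y(0)=0$ on a single edge is a scalar multiple of $\sin(\rho t)/\rho$, and since the potential enters $\al_{nk}$ through the Weyl matrix, the range of $\al_{nk}$ records exactly which combinations of the boundary "directions" $\mathcal E_{nk}$ produce genuine eigenfunctions. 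Thus the family $\mathscr F$ spans the same closed subspace as the full system of eigenfunctions of $\mathcal L$, hence is complete, and the argument is insensitive to the particular orthonormal basis chosen in $\Ran(\al_{n_1,k_1})$ because any two such bases are related by a unitary transformation that permutes within the span.

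For item~4 (Solvability), I would invoke the derivation of the main equation~\eqref{main} from \cite{Bond19-alg}: equation~\eqref{main} is obtained by contour integration relating $S(x,\la)$ and $\tilde S(x,\la)$ using the model problem $\tilde L$, and by construction it holds with $\psi(x)$ as a solution. Uniqueness for each fixed $x$ is the assertion that $\mathcal I + \tilde R(x)$ is invertible on $B$; since $\tilde R(x)$ is compact, by the Fredholm alternative it suffices to show the homogeneous equation $g(\mathcal I + \tilde R(x)) = 0$ has only the trivial solution, which in the method of spectral mappings is reduced to the Completeness property just established (the kernel of $\mathcal I + \tilde R(x)$ is shown to generate an element of $L_2$ orthogonal to all of $\mathscr F$, forcing it to vanish). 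Finally, in the graph case, $S(x,\la) = \diag\{ S_j(x,\la)\}$ is diagonal because $Q$ is diagonal and each edge equation decouples, so $\psi(x)$ is diagonal; the operator $\tilde R(x)$ built from the diagonal $\tilde S$, $\tilde D$ preserves the subspace of diagonal elements, and by the uniqueness just proved the unique solution of~\eqref{main} must lie in that subspace, giving Diagonality. The main obstacle in fleshing this out rigorously is the Completeness step together with the reduction of uniqueness of the main equation to it: one must carefully track, through the contour integrals defining $\tilde R(x)$, that a nonzero element of the kernel would yield a nonzero $L_2$-function annihilated by every vector in $\mathscr F$, which requires the precise estimates on $D(x,\la,\mu)$ and the grouping $\{G_n\}$ — but all of these are available from the quoted preliminary results.
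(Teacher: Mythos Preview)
The paper does not give its own proof of Proposition~\ref{prop:nc}; it simply attributes the result to \cite{Bond19-sp, Bond19-asympt, Bond19-alg}. Your overall strategy---taking items~1 and~2 from the Herglotz structure of $M(\la)$ and from Propositions~\ref{prop:asymptla}, \ref{prop:asymptal}, and deriving item~4 from item~3 via the Fredholm alternative---is the correct outline and is exactly what the paper itself carries out (for the sufficiency direction) in Lemma~\ref{lem:main}.

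There is, however, a genuine gap in your argument for item~3. You claim that the eigenfunctions of $\mathcal L$ are proportional to $\mathcal E_{nk}\,\sin(\rho_{nk}t)/\rho_{nk}$ because ``a solution of $-y''=\la y$, $y(0)=0$ on a single edge is a scalar multiple of $\sin(\rho t)/\rho$.'' But the equation on each edge is $-y''+q_j y=\la y$, not $-y''=\la y$; the actual eigenfunctions are $S(x,\la_{nk})\mathcal E_{nk}$, and for nonzero potential these are \emph{not} the functions in $\mathscr F$. Completeness of $\mathscr F$ is therefore not the same statement as completeness of the eigenfunctions, and you need a bridge between them. The standard bridge (implicit in \cite{Bond19-sp}) is the transformation operator: one has $S(x,\la)=\frac{\sin\rho x}{\rho}I+\int_0^x K(x,t)\,\frac{\sin\rho t}{\rho}\,dt$ with a Volterra kernel $K$, so $I+K$ is a bounded invertible operator on $L_2((0,\pi);\mathbb C^m)$ taking $\mathscr F$ to the eigenfunction system, and completeness transfers.

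A smaller error: in your Diagonality argument you assert that $\tilde R(x)$ preserves diagonal sequences because $\tilde S$ and $\tilde D$ are diagonal. But the blocks $\tilde R_{k,n}(x)$ also contain the weight matrices $\al'_{ljs}$, and these are \emph{not} diagonal in the graph case (already $A^{(1)}=T$ has all entries equal to $1/m$). The correct and much simpler argument, which the paper states at the end of Section~2, is just that $S(x,\la)$ is diagonal whenever $Q$ is diagonal, so the actual solution $\psi(x)$ of~\eqref{main} is diagonal; uniqueness then makes any invariance claim unnecessary.
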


The main goal of this paper is to show that the conditions of Proposition~\ref{prop:nc} are not only necessary but also sufficient for solvability of Inverse Problems~\ref{ip:graph} and~\ref{ip:matr}. The main results are formulated as follows.

\begin{thm}[Sufficiency in the general case] \label{thm:sc}
Let $\mathcal S = \{ \la_{nk}, \al_{nk} \}_{n \ge 1, \, k = \overline{1, m}}$ be an arbitrary element of $SD$, satifying the following conditions.
\begin{enumerate}
    \item \textsc{(Asymptotics)} 
    The values $\{ \la_{nk} \}_{n \ge 1, \, k = \overline{1, m}}$ and the matrices $\{ \al_{nk} \}_{n \ge 1, \, k = \overline{1, m}}$ satisfy the relations~\eqref{asymptla} and~\eqref{asymptal}, respectively, where $T$ is an orthogonal projector in $\mathbb C^m$, $p := \rank(T) \in [1, m-1]$, $T^{\perp} = I - T$, $\{ z_k \}_{k = 1}^m$ are real numbers, $z_k \le z_{k + 1}$ for $k = \overline{1, m-1}\backslash \{ p \}$, $\{ A^{(s)} \}_{s = 1}^m$ are orthogonal projectors in $\mathbb C^m$, having the following properties:
\begin{gather*}
\sum_{s = 1}^p A^{(s)} = T, \quad \sum_{s = p + 1}^m A^{(s)} = T^{\perp}, \\
\rank(A^{(s)}) = \# \{ k = \overline{1, p} \colon z_k = z_s\}, \: s = \overline{1, p}, \\
\rank(A^{(s)}) = \# \{ k = \overline{p + 1, m} \colon z_k = z_s \}, \: s = \overline{p + 1, m}, \\
A^{(s)} A^{(k)} = 0, \quad s, k = \overline{1, m} \colon \: (s \le p \:\: \text{and} \:\: k > p) \:\: \text{or} \:\: (z_k \ne z_s).
\end{gather*}
    \item \textsc{(Completeness)} The vectors $\{ \mathcal E_{nk} \}_{n \ge 1, \, k = \overline{1, m}}$ in Definition~\ref{def:F} can be chosen so that the sequence $\mathscr F$ is complete in $L_2((0, \pi); \mathbb C^{m})$.
\end{enumerate}
Then there exists a unique boundary value problem $L(Q(x), T, H)$ in the general form, such that $\mathcal S$ is its spectral data.
\end{thm}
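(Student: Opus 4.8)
The plan is to carry out the programme of the method of spectral mappings (see~\cite{FY01, Bond19-alg}): from the asymptotic part of $\mathcal S$ build a model problem $\tilde L$, form the main equation~\eqref{main} in the Banach space $B$, prove that it is uniquely solvable, reconstruct the triple $(Q,T,H)$ from its solution, and finally verify that the resulting problem $L(Q,T,H)$ has spectral data $\mathcal S$. For the model problem, set $\Theta := \sum_{s = 1}^m z_s A^{(s)}$. The hypotheses of the \textsc{Asymptotics} condition on the projectors $\{A^{(s)}\}$ (mutual orthogonality across the $T/T^{\perp}$ splitting and across distinct $z$-values, ranks matching the multiplicities of the $z_k$, and $\sum_{s \le p} A^{(s)} = T$, $\sum_{s > p} A^{(s)} = T^{\perp}$) guarantee that $\Theta$ is Hermitian, that $\Theta = T \Theta T + T^{\perp} \Theta T^{\perp}$, and that $\Theta$ has eigenvalues $\{z_k\}_{k = 1}^m$ with the $A^{(s)}$ as the corresponding spectral projectors in the sense of Proposition~\ref{prop:asymptal}. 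Following the recipe recalled in Section~2, put $\tilde Q(x) := \tfrac{2}{\pi}\Theta$, $\tilde T := T$, $\tilde H := 0$; then $\tilde L = L(\tilde Q, \tilde T, \tilde H)$ is a boundary value problem in the general form with $\tilde \Omega = \Theta$ and $\tilde \Theta = \Theta$, so by Propositions~\ref{prop:asymptla} and~\ref{prop:asymptal} its spectral data $\tilde{\mathcal S}$ satisfies the same asymptotic relations~\eqref{asymptla}--\eqref{asymptal} with the very same $T$, $\{z_k\}$, $\{A^{(s)}\}$. Consequently the differences $\la_{nk} - \tilde\la_{nk}$ and the group-sum differences $\al_n^{I} - \tilde\al_n^{I}$, $\al_n^{II} - \tilde\al_n^{II}$, $\al_n^{(s)} - \tilde\al_n^{(s)}$ decay fast enough for the estimates of Section~4 to apply.

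With $\tilde L$ fixed, define the operator $\tilde R(x) \colon B \to B$ as in Section~2 from the data $\mathcal S$ and $\tilde{\mathcal S}$. The estimates of Section~4 show that $\tilde R(x)$ is compact on $B$ for every $x \in [0,\pi]$, so the Fredholm alternative reduces the unique solvability of the main equation $\tilde\psi(x) = \psi(x)(\mathcal I + \tilde R(x))$ to the statement that the homogeneous equation $\be(x)(\mathcal I + \tilde R(x)) = 0$ has only the zero solution in $B$. This is where the \textsc{Completeness} hypothesis enters, and it is the step I expect to be the main obstacle. Given a solution $\be(x) = \{\be_n(x)\} \in B$, one expands the definition of $\tilde R(x)$, carries out the contour-integration identities of the method of spectral mappings, and uses $\mathcal S \in SD$ (Hermiticity and nonnegativity of $\al_{nk}$, with $\rank \al_{nk}$ equal to the multiplicity of $\la_{nk}$, so that the vectors $\mathcal E_{nk}$ of Definition~\ref{def:F} form an orthonormal basis of $\Ran \al_{nk}$) to produce from the components $\be_n(x)(\rho)$ a nonzero square-summable family of coefficients that would annihilate a nonzero vector function in $L_2((0,\pi); \mathbb C^m)$ orthogonal to every element of $\mathscr F$ --- contradicting \textsc{Completeness}. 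The delicate point is that the spectrum carries infinitely many groups of multiple and asymptotically multiple eigenvalues, so the ``rows'' of the main equation are the finite blocks $B(G_n)$, and the passage from the homogeneous equation to a statement about $\mathscr F$ has to control both the pointwise values and the divided differences $|\rho - \theta|^{-1}\|f(\rho) - f(\theta)\|$ defining the norm of $B(G_n)$; this is precisely why the grouping $\{G_n\}$ and the space $B$ were introduced.

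Let $\psi(x) = \{\psi_n(x)\}$ be the unique solution of~\eqref{main}. As in~\cite{Bond19-alg}, it defines a matrix function $S(x, \la)$ (by $S(x, \rho^2) := \psi_n(x)(\rho)$ for $\rho \in G_n$, extended to all $\la$ by the usual interpolation), an auxiliary matrix function $\eps(x)$ given by the matrix analogue of Yurko's series built from $\psi_n(x)$, $\tilde S$ and the weight matrices $\al'_{nks}$, and then one sets
$$
Q(x) := \tilde Q(x) - 2\eps'(x), \qquad T := \tilde T, \qquad \Omega := \tfrac12 \int_0^{\pi} Q(x)\,dx, \qquad H := T \Omega T - T \Theta T .
$$
Using the symmetry of the main equation together with $\mathcal S \in SD$, one checks that $\eps'(x) \in L_2((0, \pi); \mathbb C^{m \times m})$ and is Hermitian, hence $Q$ is an admissible Hermitian potential with $L_2$ entries; that $H = H^{\dagger}$ and $H = THT$; and $1 \le \rank T \le m-1$ holds by assumption. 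Thus $L = L(Q, T, H)$ is a boundary value problem in the general form. (The formula for $H$ is dictated by the identity $T \Theta T = T \Omega T - H$, which follows from $\Theta = T(\Omega - H)T + T^{\perp} \Omega T^{\perp}$ and $H = THT$.)

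It remains to verify that $\mathcal S$ is the spectral data of this $L$. First one shows that $S(x, \la)$ is exactly the solution of~\eqref{eqv} with $S(0, \la) = 0$, $S'(0, \la) = I$ for the reconstructed $Q$; this is obtained, in the standard way, by substituting $Q = \tilde Q - 2\eps'$ into the integral form of the main equation. Then, running the contour-integral identities of the method of spectral mappings backwards, one shows that the Weyl matrix $M(0, \la) = \Phi'(0, \la)$ of $L$ is meromorphic in $\la$ with simple poles exactly at the points $\{\la_{nk}\}$ and with $-\Res_{\la = \la_{nk}} M(0, \la) = \al_{nk}$, while the \textsc{Asymptotics} hypothesis ensures that the polynomials $\mathcal P_1, \mathcal P_2$ of~\eqref{defP12} and the projectors $A^{(s)}$ built from the reconstructed $(Q, T, H)$ coincide with the prescribed ones, so that the eigenvalue numbering of Definition~\ref{def:eig} and the groupings $\{G_n\}$ agree with those used in the construction. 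Finally, the uniqueness of the problem $L(Q, T, H)$ with the given spectral data is already known from the uniqueness theorems of~\cite{BW05, Yur05, Xu19, Bond19-alg}. Throughout, the source of difficulty is the control of the infinitely many groups of (asymptotically) multiple eigenvalues, which is exactly what the space $B$ with its divided-difference norm and the grouping $\{G_n\}$ are designed to handle.
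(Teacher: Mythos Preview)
Your outline is broadly correct --- model problem, Fredholm alternative for~\eqref{main} via \textsc{Completeness}, reconstruction, verification --- and it matches the paper's strategy in Sections~5--6. However, there is one genuine gap and two points where your route diverges from the paper in ways you should be aware of.

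\smallskip

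\textbf{The gap: the boundary condition $V(\Phi)=0$.} You write that after reconstructing $Q$ one ``shows that the Weyl matrix $M(0,\la)=\Phi'(0,\la)$ of $L$ is meromorphic with simple poles exactly at the points $\{\la_{nk}\}$\ldots''. But the series~\eqref{defPhi} does not define the Weyl solution of $L$ until you prove that it satisfies the boundary condition $V(\Phi)=0$ at $x=\pi$. This is the content of Lemma~\ref{lem:VPhi}, and it is \emph{not} a routine back-substitution: one first has to show that the series
\[
f(x)=\sum_{n,k} V(S(x,\la_{nk}))\,\al'_{nk}\,\tilde S^{\dagger}(x,\la_{nk})
\]
converges in $L_2((0,\pi);\mathbb C^{m\times m})$ (Lemma~\ref{lem:fL2}), which needs a careful cancellation using the asymptotics~\eqref{asymptal} and the identity $T(\Omega-H)T=T\Theta T$; then one invokes the completeness of $\{\tilde S_{nk1}\tilde{\mathcal E}_{nk}\}$ and the minimality of $\{\tilde S_{nk0}\mathcal E_{nk}\}$ (Proposition~\ref{prop:basis}) to deduce $V(S_{lj0})\al_{lj0}=0$ for all $l,j$, and only then does $V(\Phi)=0$ follow. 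Your sentence ``running the contour-integral identities \ldots\ backwards'' glosses over this; without it you cannot identify~\eqref{defPhi} with the Weyl solution, and the residue computation for $M(\la)$ is not justified.

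\smallskip

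\textbf{Hermiticity.} You claim that $\eps'(x)$ is Hermitian ``using the symmetry of the main equation''. The paper does not attempt this and instead argues a posteriori: once one knows $\mathcal S$ is the spectral data of $L(Q,T,H)$, one introduces the transposed problem $L^*$, shows $M^*(\la)\equiv M(\la)$, deduces that $L(Q^{\dagger},T,H^{\dagger})$ has the same spectral data as $L(Q,T,H)$, and then invokes the uniqueness theorems of~\cite{Xu19, Bond19-alg}. Your direct route may be feasible, but it is a different argument and you would have to supply it; the paper's proof is cleaner because it reuses uniqueness.

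\smallskip

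\textbf{Minor point.} Your definition $\Theta:=\sum_{s=1}^m z_s A^{(s)}$ overcounts when several $z_s$ coincide, since in that case the corresponding $A^{(s)}$ are equal (they are the spectral projectors onto the same eigenspace). The paper sums over the index set $J$ of distinct representatives; you should do the same.
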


\begin{thm}[Sufficiency in the graph case] \label{thm:scg}
Let $\mathcal S = \{ \la_{nk}, \al_{nk} \}_{n \ge 1, \, k = \overline{1, m}}$ be an arbitrary element of $SD$, satisfying the following conditions.
\begin{enumerate}
    \item \textsc{(Asymptotics)} There exist real numbers $\{ \om_j \}_{j = 1}^m$, such that for the values $\{ \la_{nk} \}_{n \ge 1, \, k = \overline{1, m}}$ and for the matrices $\{ \al_{nk} \}_{n \ge 1, \, k = \overline{1,m }}$ the asymptotic relations~\eqref{asymptla} and~\eqref{asymptal} hold, respectively, where $p = 1$, $T$ and $T^{\perp}$ are the matrices defined by~\eqref{defT}, $z_1$ is an arbitrary real number, $\{ z_k \}_{k = 2}^m$ are the roots of the polynomial $\mathcal P_2(z)$ defined by~\eqref{defP2}, $z_k \le z_{k+1}$, $k = \overline{2, m-1}$, the matrices $\{ A^{(s)} \}_{s = 1}^m$ are defined by~\eqref{defAs}, \eqref{defA}.
    \item \textsc{(Completeness)} The vectors $\{ \mathcal E_{nk} \}_{n \ge 1, \, k = \overline{1, m}}$ in Definition~\ref{def:F} can be chosen so that the sequence $\mathscr F$ is complete in $L_2((0, \pi); \mathbb C^{m})$.
\end{enumerate}

Let $\tilde L = L(\tilde Q(x), \tilde T, \tilde H)$ be constructed by the formulas~\eqref{modelg}, where $\Omega := \diag\{ \om_j \}_{j = 1}^m$.
Under the conditions~1-2, the main equation~\eqref{main} is uniquely solvable \textsc{(Solvability)}.

If, in addition, the solution $\psi(x)$ of the main equation is diagonal \textsc{(Diagonality)}, then there exist unique real-valued functions $\{ q_j \}_{j = 1}^m$, $q_j \in L_2(0, \pi)$, $j = \overline{1, m}$, such that $\mathcal S$ is the spectral data of the operator $\mathcal L$, constructed by $\{ q_j \}_{j = 1}^m$ and $h = \tilde h$ ($\tilde h$ is defined by~\eqref{modelg}).

\end{thm}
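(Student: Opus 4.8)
I would prove the two assertions in turn, running the argument in parallel with the general case but with the diagonal model problem~\eqref{modelg}. \emph{Solvability.} Since $\tilde R(x)$ is compact on $B$, the main equation~\eqref{main} is uniquely solvable at a fixed $x \in [0,\pi]$ if and only if the homogeneous equation $\be(x)(\mathcal I + \tilde R(x)) = 0$ has only the zero solution in $B$. Assuming $\be(x) = \{\be_n(x)\}_{n\ge1} \in B$ is a nonzero solution, I would reverse the contour-integration and summation-by-groups procedure by which~\eqref{main} is derived, so as to build from $\be(x)$ a nonzero vector function in $L_2((0,\pi);\mathbb C^m)$ orthogonal to every element of $\mathscr F$; the grouping~\eqref{defG} together with the asymptotics~\eqref{asymptla}--\eqref{asymptal} --- which enter the hypothesis through the $\{z_k\}$ and the matrices $A^{(s)}$ of~\eqref{defAs}--\eqref{defA} --- supplies the $l_2$-type bounds needed for convergence of the relevant series in $B$. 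This contradicts the completeness hypothesis, so Solvability holds; the auxiliary estimates required here are those collected in Section~4.

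\emph{Reconstruction.} Now assume, in addition, the Diagonality hypothesis, and let $\psi(x) = \{\psi_n(x)\}_{n\ge1}$ be the unique solution of~\eqref{main}. Following~\cite{Bond19-alg}, from $\psi(x)$ and the quantities $\{\al'_{nks}, \tilde S_{nks}\}$ I would form the kernel-type matrix function $\eps_0(x)$, prove $\eps_0 \in W_2^1((0,\pi);\mathbb C^{m\times m})$, and set $Q(x) := \tilde Q(x) - 2\eps_0'(x)$; the estimates of Section~4 then give $Q \in L_2((0,\pi);\mathbb C^{m\times m})$. Because $\tilde Q = \frac2\pi\Omega$ is diagonal, the matrices $\tilde S_{nks}(x)$ are diagonal, and since every value $\psi_n(x)(\rho)$ is diagonal, so are $\eps_0(x)$ and hence $Q(x) = \diag\{q_j(x)\}_{j=1}^m$; the conditions defining $SD$ and the Hermitian symmetry of the construction force the $q_j$ to be real-valued. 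Finally I set $h := \tilde h$ and $H := hT$; this is legitimate since for $p = \rank(T) = 1$ every Hermitian $H$ with $H = THT$ is automatically a real multiple of $T$.

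\emph{Verification.} For $L := L(Q(x), T, H)$ I would deduce from~\eqref{main} the transformation formula expressing the solution $S(x,\la)$, and then the Weyl solution $\Phi(x,\la)$, of $L$ through $\tilde S(x,\la)$, $\tilde\Phi(x,\la)$ and $\eps_0$. It then follows that the Weyl matrix $M(\la) = \Phi'(0,\la)$ is meromorphic with only simple poles, located precisely at $\{\la_{nk}\}$, satisfying $-\Res_{\la=\la_{nk}} M(\la) = \al_{nk}$, and with the rank conditions of $SD$ and the projector structure~\eqref{asymptal} of the grouped sums holding automatically --- i.e. $\mathcal S$ is the spectral data of $L$. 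Matching the asymptotics~\eqref{asymptla} of $L$ with the given ones, together with~\eqref{defP2} and $\tilde\Omega = \Omega$, yields $\frac12\int_0^\pi Q(x)\,dx = \diag\{\om_j\}_{j=1}^m$, so by~\eqref{defz1} the coefficient of the constructed $\mathcal L$ equals $\frac1m\sum_j \om_j - z_1 = \tilde h = h$, consistently with the choice above. Uniqueness of $\{q_j\}_{j=1}^m$ and of $h$ is the already-known uniqueness for Inverse Problem~\ref{ip:graph} (\cite{BW05,Yur05,Xu19,Bond19-alg}).

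\emph{Main obstacle.} The delicate point is the Verification step: proving that $M(\la)$ has \emph{exactly} the prescribed poles with the prescribed weight matrices --- the correct ranks and the orthogonal-projector structure of the grouped sums $\al_n^{(s)}$ in~\eqref{asymptal} --- when the spectrum contains infinitely many groups of multiple and asymptotically multiple eigenvalues. Controlling these groups needs the full strength of the grouping~\eqref{defG} and of the Banach space $B$, and it is also where the Diagonality hypothesis must be propagated through every contour-integration identity, so that all intermediate objects ($\psi$, $\eps_0$, $Q$, $S$, $\Phi$) remain diagonal and the reconstructed operator is genuinely a Sturm-Liouville operator on the star-shaped graph rather than a general matrix operator.
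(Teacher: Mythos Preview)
Your overall architecture --- Fredholm alternative plus contour integration for \textsc{Solvability}, construction of $\eps_0$ and $Q$ from the solution $\psi(x)$, then verification via the Weyl matrix --- matches the paper's route through Sections~5--6. The solvability sketch is essentially Lemma~\ref{lem:main}: from a nontrivial solution $f$ of the homogeneous equation one builds entire $\ga(\la)$ and meromorphic $\Gamma(\la)$, integrates $F(\la)=\Gamma(\la)\ga^\dagger(\bar\la)$ over expanding contours, obtains $\ga(\la_{lj0})\al_{lj0}=0$, and then Paley--Wiener plus \textsc{Completeness} force $\ga\equiv 0$.

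There is, however, a genuine gap in your diagonality argument. You write that ``since every value $\psi_n(x)(\rho)$ is diagonal, so are $\eps_0(x)$ and hence $Q(x)$''. But in the defining series
\[
\eps_0(x)=\sum_{l,j}\bigl(S_{lj0}(x)\,\al'_{lj0}\,\tilde S^{\dagger}_{lj0}(x)-S_{lj1}(x)\,\al'_{lj1}\,\tilde S^{\dagger}_{lj1}(x)\bigr),
\]
the weight matrices $\al'_{ljs}$ are \emph{not} diagonal, and a product $D_1\,A\,D_2$ with $D_1,D_2$ diagonal and $A$ arbitrary is not diagonal. So diagonality of $\eps_0$ (hence of $Q$) does not follow from diagonality of $\psi$ and $\tilde S$ alone. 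The paper avoids this trap (Lemma~6.6): from $S(x,\la_{lj0})=S_{lj0}(x)$ diagonal for all $l,j$, together with the entire character and growth~\eqref{asymptS} of $S(x,\la)$, an interpolation argument forces the off-diagonal entries of $S(x,\la)$ to vanish identically; then the differential equation $-S''+QS=\la S$ yields that $Q$ is diagonal. Relatedly, your remark that ``all intermediate objects ($\psi,\eps_0,Q,S,\Phi$) remain diagonal'' is too strong: the Weyl solution $\Phi(x,\la)$ is \emph{not} diagonal in the graph case (its off-diagonal columns are nonzero because of the matching conditions at $\pi$), and you should not expect it to be.

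A smaller but related point concerns $H$. You set $H:=\tilde h\,T$ by fiat, but the verification that $V(\Phi)=0$ (the analogue of Lemma~\ref{lem:VPhi}) uses the specific relation $H-\tilde H=-T\eps_0(\pi)T$; without it, the cancellation after substituting $\tilde V(\tilde S)=V(\tilde S)+T(H-\tilde H)\tilde S(\pi,\la)$ does not go through. The paper first defines $H$ by~\eqref{defQH}, proves $V(\Phi)=0$, and only \emph{afterwards} observes that $\rank(T)=1$ forces $H=hT$ and that matching the $z_j$'s gives $h=\tilde h$. Your a posteriori consistency check is the right computation, but it should come after defining $H$ via the construction, not before. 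Finally, real-valuedness of the $q_j$ is not ``Hermitian symmetry of the construction'' but a uniqueness argument: the paper compares $L$ with $L^\dagger=L(Q^\dagger,T,H^\dagger)$ via the adjoint problem $L^*$ and invokes uniqueness of Inverse Problem~\ref{ip:matr}.
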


Thus, Proposition~\ref{prop:nc} and Theorem~\ref{thm:sc} together give characterization of the spectral data for the matrix Sturm-Liouville problem \eqref{eqv}-\eqref{bc} in the general form. Proposition~\ref{prop:nc} together with Theorem~\ref{thm:scg} characterize spectral data for the Sturm-Liouville operator $\mathcal L$ on the star-shaped graph. Note that, in Theorem~\ref{thm:scg}, \textsc{Solvability} of the main equation is not required, but it follows from \textsc{Asymptotics} and \textsc{Completeness}.

Theorems~\ref{thm:sc} and~\ref{thm:scg} are proved in Sections 4-6.

\section{Estimates}

This section plays an auxiliary role. Here we obtain asymptotic formulas and estimates used in the further proofs.

Recall the notation $\tau := \mbox{Im}\,\rho$. The matrix solution $S(x, \la)$ has the following standard asymptotics as $|\rho| \to \iy$, $\la = \rho^2$:
\begin{equation} \label{asymptS}
    S(x, \la) = \frac{\sin \rho x}{\rho} I + O\left( \rho^{-2} \exp(|\tau| x)\right), \quad
    S'(x, \la) = \cos \rho x I + O\left( \rho^{-1} \exp(|\tau| x)\right).
\end{equation}


Let $\Psi(x, \la)$ be the matrix solution of~\eqref{eqv} under the initial conditions $\Psi(\pi, \la) = T$, $\Psi'(\pi, \la) = T^{\perp} + H T$. Clearly, $V(\Psi) = 0$. The following asymptotic formulas are valid as $|\rho| \to \iy$:
\begin{align*}
    \Psi(x, \la) & = \Bigl(\cos \rho (\pi - x) I + O\left( \rho^{-1} \exp(|\tau|(\pi - x))\right)\Bigr) T \\ & + \Bigl( -\frac{\sin \rho (\pi - x)}{\rho} I + O\left( \rho^{-2} \exp(|\tau|(\pi - x))\right)\Bigr) T^{\perp}, \\
    \Psi'(x, \la) & = \Bigl( \rho \sin \rho (\pi - x) I + O\left( \exp(|\tau|(\pi - x))\right)\Bigr) T \\ & + \Bigl( \cos \rho(\pi - x) I + O\left( \rho^{-1} \exp(|\tau|(\pi - x))\right)\Bigr) T^{\perp}. 
\end{align*}

The Weyl solution can be expressed in the form
$$
\Phi(x, \la) = \Psi(x, \la) \Psi^{-1}(0, \la).
$$
For $|\rho| \to \iy$, $\rho \in G_{\de}$, where
$$
G_{\de} := \{ \rho \in \mathbb C \colon |\rho - n| \ge \de, \, |\rho - (n - \tfrac{1}{2})| \ge \de \}, \quad \de > 0,
$$
we have
$$
\Psi^{-1}(0, \la) = T \biggl( \frac{1}{\cos \rho \pi} I + O\left( \rho^{-1} \exp(-|\tau|\pi)\right)\biggr) -T^{\perp} \biggl(\frac{\rho}{\sin \rho \pi} + O\left( \exp(-|\tau|\pi)\right) \biggr).
$$
Consequently, we obtain the asymptotic relations
\begin{align} \label{asymptPhi}
    & \Phi(x, \la) = T \frac{\cos \rho (\pi - x)}{\cos \rho \pi} + T^{\perp} \frac{\sin \rho (\pi - x)}{\sin \rho \pi} + O\left( \rho^{-1} \exp(-|\tau| x)\right), \\ \label{asymptPhip}
    & \Phi'(x, \la) = T \frac{\rho \sin \rho(\pi - x)}{\cos \rho \pi} - T^{\perp} \frac{\rho \cos \rho(\pi - x)}{\sin \rho \pi} + O\left( \exp(-|\tau| x)\right),
\end{align}
for $|\rho| \to \iy$, $\rho \in G_{\de}$.

Define the matrix function
\begin{equation} \label{defE}
    E(x, \la, \mu) := \frac{\langle S^{\dagger}(x, \bar \la), \Phi(x, \mu) \rangle}{\la - \mu}.
\end{equation}
Using the asymptotic formulas~\eqref{asymptS}, \eqref{asymptPhi} and \eqref{asymptPhip} together with the definitions~\eqref{defD} and~\eqref{defE}, we obtain the estimates
\begin{align} \label{estD}
    & \| D(x, \rho^2, \theta^2) \| \le \frac{C \exp((|\mbox{Im}\,\rho| + |\mbox{Im}\,\theta|) x)}{(|\rho| + 1)(|\theta| + 1)(|\rho - \theta| + 1)}, \quad \rho, \theta \in \mathbb C,  \\ \label{estE}
    & \| E(x, \rho^2, \theta^2) \| \le \frac{C \exp((|\mbox{Im}\, \rho| - |\mbox{Im}\, \theta|) x)}{(|\rho| + 1)(|\rho - \theta| + 1)}, \quad \rho \in \mathbb C, \quad
    \theta \in G_{\de}, 
\end{align}
where $x \in [0, \pi]$, and $C$ is a positive constant independent of $x$, $\rho$ and $\theta$.

Along with the problem $L$, consider a problem $\tilde L$, such that 
\begin{equation} \label{eqcoef}
p = \tilde p, \quad T = \tilde T, \quad z_s = \tilde z_s, \quad A^{(s)} = \tilde A^{(s)}, \quad s = \overline{1, m},
\end{equation}
i.e. all the coefficients in the asymptotic formulas~\eqref{asymptla} and~\eqref{asymptal} for the problems $L$ and $\tilde L$ coincide.

Consider the collections~$\{ G_k \}_{k \ge 1}$ defined by~\eqref{defG}. Introduce the notations
$$
n_1 := 0, \quad n_{2j} = n_0 + j - \frac{1}{2}, \quad n_{2j + 1} = n_0 + j, \quad j \ge 1,
$$
i.e. $n_k$ is the main part in the asymptotic relations~\eqref{asymptla} for the values from $G_k$. 
    
Denote by $r_1$ and $r_2$ the numbers of distinct values among $\{ z_k \}_{k = 1}^p$ and among $\{ z_k \}_{k = p + 1}^m$, respectively. Consider the index sets
$$
\mathcal J_s := \begin{cases}
\{ k = \overline{1, p} \colon z_k = z_s \}, \quad s = \overline{1, p}, \\
\{ k = \overline{p+1, m} \colon z_k = z_s \}, \quad s = \overline{p + 1, m}.
\end{cases}
$$
Denote all the distinct sets from $\{ \mathcal J_s \}_{s = 1}^p$ and $\{ \mathcal J_s \}_{s = p + 1}^m$ by $\{ J^{(1)}_s \}_{s = 1}^{r_1}$ and $\{ J^{(2)}_s \}_{s = 1}^{r_2}$, respectively.

We divide every collection $G_k$ into subcollections as follows:
\begin{gather*}
G_k = \bigcup_{i = 1}^{p_k} G_{ki}, \quad
p_1 := 1, \quad p_{2j} := r_1, \quad p_{2j + 1} := r_2, \\
G_{11} := G_1, \quad G_{2j,i} := \{ \rho_{n_0 + j, ks} \}_{k \in J_i^{(1)}, \, s = 0, 1}, \quad
G_{2j+1, i} := \{ \rho_{n_0 + j, ks} \}_{k \in J_i^{(2)}, \, s = 0, 1}.
\end{gather*}
In $n_0$ is sufficiently large, we have $G_{ki} \cap G_{kj} = \varnothing$, $i \ne j$, $k \ge 1$.

For any collection $\mathcal G$, introduce the notations
$$
\al(\mathcal G) = \sum_{(l, j) \colon \rho_{lj0} \in \mathcal G} \al'_{lj0}, \quad \tilde \al(\mathcal G) = \sum_{(l, j) \colon \rho_{lj1} \in \mathcal G} \al'_{lj1}.
$$

In view of the asymptotics~\eqref{asymptla}, \eqref{asymptal} and the relation~\eqref{eqcoef}, we have
\begin{gather} \label{defXi}
\Xi := \left( \sum_{k = 1}^{\iy} (k \xi_k)^2 \right)^{1/2} < \iy, \\ \label{defxi}
\xi_k := \sum_{i = 1}^{p_k} \sum_{\rho, \theta \in G_{ki}} |\rho - \theta| + \frac{1}{k^3} \sum_{i = 1}^{p_k} \| \al(G_{ki}) - \tilde \al(G_{ki}) \| + \frac{1}{k^2} \| \al(G_k) - \tilde \al(G_k) \|. 
\end{gather}

Using the estimates~\eqref{estD} and \eqref{estE} for $\tilde D$ and $\tilde E$, respectively, and the standard approach (see, e.g., \cite[Lemma~1.6.2]{FY01}), based on Schwarz's lemma, we obtain the following result.

\begin{lem} \label{lem:Schwarz}
For $x \in [0, \pi]$, $k \ge 1$, the following estimates are valid:
\begin{gather*}
    \| \tilde D(x, \chi^2, \rho^2) - \tilde D(x, \theta^2, \rho^2) \| \le \frac{C \exp(|\tau| x)}{k^2 (|\rho| + 1) (|\rho - n_k| + 1)}, \quad \rho \in \mathbb C, \quad \theta, \chi \in G_k, \\
\| \tilde D(x, \chi^2, \rho^2) - \tilde D(x, \theta^2, \rho^2) \| \le \frac{C \xi_k \exp(|\tau|x)}{k (|\rho| + 1) (|\rho - n_k| + 1)}, \quad \rho \in \mathbb C, \quad \chi, \theta \in G_{ki}, \quad i = \overline{1, p_k}, \\
\| \tilde E(x, \chi^2, \rho^2) - \tilde E(x, \theta^2, \rho^2) \| \le \frac{C \exp(-|\tau| x)}{k^2 (|\rho - n_k| + 1)}, \quad \rho \in G_{\de}, \quad \theta, \chi \in G_k, \\
\| \tilde E(x, \chi^2, \rho^2) - \tilde E(x, \theta^2, \rho^2) \| \le \frac{C \xi_k \exp(-|\tau|x)}{k (|\rho - n_k| + 1)}, \quad \rho \in G_{\de}, \quad \chi, \theta \in G_{ki}, \quad i = \overline{1, p_k},
\end{gather*}
where the constant $C$ does not depend on $x$, $\rho$, $k$, $\chi$ and $\theta$.
\end{lem}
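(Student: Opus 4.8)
\medskip
\noindent\textbf{Proof plan.} The plan is to treat $\tilde D(x, z^2, \rho^2)$ and $\tilde E(x, z^2, \rho^2)$ as functions of the single variable $z$ in a fixed neighbourhood of the point $n_k$ (with $x$ and $\rho$ kept fixed), to estimate these functions on that neighbourhood by means of \eqref{estD} and \eqref{estE}, and then to pass from such estimates to bounds on the increments $\tilde D(x,\chi^2,\rho^2) - \tilde D(x,\theta^2,\rho^2)$ and $\tilde E(x,\chi^2,\rho^2) - \tilde E(x,\theta^2,\rho^2)$ via Schwarz's lemma, exactly in the manner of \cite[Lemma~1.6.2]{FY01}. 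The matrix-valuedness is immaterial here: one argues entrywise, or uses the Banach-space-valued forms of Cauchy's and Schwarz's estimates. The case $k = 1$ concerns the fixed finite multiset $G_1$ and reduces to elementary bounds on a compact set, with all constants absorbed into $C$; so from now on we take $n_0$ (hence $k$) large, which is anyway what is needed in the sequel.

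First I would record the analytic structure. By the standard Wronskian identity, $\partial_x \langle \tilde S^{\dagger}(x, \overline{z^2}), \tilde S(x, \rho^2)\rangle = (z^2 - \rho^2)\,\tilde S^{\dagger}(x, \overline{z^2})\tilde S(x, \rho^2)$, and this Wronskian vanishes at $x = 0$ since $\tilde S(0, \cdot) = 0$; integrating gives $\tilde D(x, z^2, \rho^2) = \int_0^x \tilde S^{\dagger}(t, \overline{z^2})\tilde S(t, \rho^2)\, dt$, which is entire in $z$. The function $\tilde E(x, z^2, \rho^2)$ is meromorphic in $z$, its only possible poles being at $z = \pm\rho$ (from the factor $(z^2 - \rho^2)^{-1}$); for $\rho \in G_{\de}$ these poles stay away from $n_k$, because $n_k \in \tfrac{1}{2}\mathbb{Z}$ and $\rho$ lies at distance $\ge \de$ from $\tfrac{1}{2}\mathbb{Z}$. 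I would now fix a radius $r \in (0, \de)$ and, using that $\operatorname{diam}(G_k) \le C/k$ by \eqref{asymptla}, enlarge $n_0$ so that $G_k \subset \{\, z : |z - n_k| \le r/2 \,\}$ for every $k \ge 2$; then, for all $\rho \in G_{\de}$, the closed disk $|z - n_k| \le r$ is free of poles, and both $\tilde D(x, z^2, \rho^2)$ and $\tilde E(x, z^2, \rho^2)$ are holomorphic in $z$ on it.

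On the disk $|z - n_k| \le r$ one has $|\mbox{Im}\, z| \le r$, hence $\exp(|\mbox{Im}\, z| x) \le C$, while $|z| + 1 \ge C^{-1}(k + 1)$ and $|z - \rho| + 1 \ge C^{-1}(|\rho - n_k| + 1)$ (the latter by the triangle inequality, distinguishing whether $|\rho - n_k|$ is small or large). Plugging these into \eqref{estD} and \eqref{estE} yields, for all $z$ with $|z - n_k| \le r$,
\[
\| \tilde D(x, z^2, \rho^2) \| \le \frac{C \exp(|\tau| x)}{k\,(|\rho| + 1)(|\rho - n_k| + 1)}, \qquad
\| \tilde E(x, z^2, \rho^2) \| \le \frac{C \exp(-|\tau| x)}{k\,(|\rho - n_k| + 1)}, \quad \rho \in G_{\de}.
\]
Applying Cauchy's estimate for the derivative (equivalently, Schwarz's lemma) on this disk then bounds $\partial_z \tilde D(x, z^2, \rho^2)$ and $\partial_z \tilde E(x, z^2, \rho^2)$ on the concentric disk $|z - n_k| \le r/2$, which contains $G_k$, by the same two right-hand sides up to a new constant $C$; in other words, the maps $z \mapsto \tilde D(x, z^2, \rho^2)$ and $z \mapsto \tilde E(x, z^2, \rho^2)$ are Lipschitz on $G_k$ with these quantities as Lipschitz moduli.

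Finally, for $\chi, \theta \in G_k$ the mean-value inequality $\| f(\chi) - f(\theta) \| \le |\chi - \theta|\,\operatorname{Lip}(f)$ together with $|\chi - \theta| \le \operatorname{diam}(G_k) \le C/k$ produces the first and third estimates of the lemma. For $\chi, \theta \in G_{ki}$ I would instead note that $|\chi - \theta|$ is one of the nonnegative summands of the first term in the definition~\eqref{defxi} of $\xi_k$, so that $|\chi - \theta| \le \xi_k$; using this in place of $\operatorname{diam}(G_k)$ gives the second and fourth estimates. The one point that needs genuine care rather than routine computation is the uniform-in-$\rho$ bookkeeping of the two preceding paragraphs: one must choose the fixed radius $r$ and the threshold $n_0$ so that, simultaneously for every $\rho \in G_{\de}$, a single disk around $n_k$ contains $G_k$, avoids the poles $z = \pm\rho$ of $\tilde E$, and lets the running-variable bounds \eqref{estD}, \eqref{estE} be replaced by their clean form with $n_k$ in place of $z$. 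Once this is arranged, the rest follows the pattern of \cite[Lemma~1.6.2]{FY01}.
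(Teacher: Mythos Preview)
Your proposal is correct and follows exactly the approach the paper indicates: the paper does not give a detailed proof but simply says the lemma follows from the estimates~\eqref{estD}, \eqref{estE} and the standard Schwarz-lemma argument of \cite[Lemma~1.6.2]{FY01}, which is precisely what you have spelled out. Your handling of the details --- the choice of a disk of radius $r<\de$ about $n_k$ containing $G_k$, the replacement of $|z|+1$ and $|z-\rho|+1$ by $k$ and $|\rho-n_k|+1$, the passage from the Cauchy/Schwarz derivative bound to the Lipschitz estimate, and the use of $\operatorname{diam}(G_k)\le C/k$ versus $|\chi-\theta|\le \xi_k$ for $\chi,\theta\in G_{ki}$ --- is accurate and matches the intended argument.
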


The following proposition has been proved in \cite{Bond19-alg}.

\begin{prop} \label{prop:estR}
For $x \in [0, \pi]$, the following estimates hold:
\begin{gather*}
    \| \tilde R_{k, n}(x) \|_{B(G_k) \to B(G_n)} \le \frac{C k \xi_k}{n (|n - k| + 1)}, \quad n, k \ge 1, \\
    \| \tilde R(x) \|_{B \to B} \le C \Xi.
\end{gather*}
\end{prop}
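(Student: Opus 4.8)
The plan is to estimate the operator norm of $\tilde R_{k,n}(x)$ by carefully exploiting the two key structural features built into the space $B$: the Lipschitz-type seminorm on each $B(G_k)$, and the fine subdivision of $G_k$ into the pieces $\{G_{ki}\}_{i=1}^{p_k}$, on which the weight sums $\al(G_{ki})$, $\tilde\al(G_{ki})$ are controlled by $\xi_k$. First I would fix $f_k \in B(G_k)$ with $\|f_k\|_{B(G_k)} = 1$ and write out $(f_k \tilde R_{k,n}(x))(\rho)$ for $\rho \in G_n$. The crucial algebraic move is to replace, inside each subcollection $G_{ki}$, the value $f_k(\rho_{lj0})$ (resp. $f_k(\rho_{lj1})$) by a single reference value $f_k(\chi_i)$ for some fixed $\chi_i \in G_{ki}$. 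The error terms from these replacements are handled by the Lipschitz bound $\|f_k(\rho) - f_k(\chi_i)\| \le |\rho - \chi_i| \le \sum_{\theta,\eta \in G_{ki}} |\theta - \eta|$, combined with the estimate \eqref{estD} for $\tilde D$; summing the geometric-type series in $l$ against $|\rho - n_k|^{-1}$-type weights, and recalling $|\rho - \theta| \le C k^{-1}$ inside $G_k$, one gets a contribution bounded by $C k \xi_k / (n(|n-k|+1))$, using the first term in the definition \eqref{defxi} of $\xi_k$.

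Next, for the main (reference-value) terms, after the replacement one is left with expressions of the shape $f_k(\chi_i)\bigl(\al(G_{ki}) \tilde D(x,\cdot,\rho^2) - \tilde\al(G_{ki}) \tilde D(x,\cdot,\rho^2)\bigr)$ plus cross terms where $\tilde D$ is evaluated at slightly different points within $G_{ki}$. I would split this further: the piece where the same $\tilde D(x,\chi_i^2,\rho^2)$ multiplies $\al(G_{ki}) - \tilde\al(G_{ki})$, which is estimated directly using \eqref{estD} and the middle term $k^{-3}\sum_i \|\al(G_{ki}) - \tilde\al(G_{ki})\|$ of $\xi_k$; and the piece where $\tilde D$ at different nodes of $G_{ki}$ is compared, which is controlled by the second estimate of Lemma~\ref{lem:Schwarz}, namely $\|\tilde D(x,\chi^2,\rho^2) - \tilde D(x,\theta^2,\rho^2)\| \le C\xi_k \exp(|\tau|x)/(k(|\rho|+1)(|\rho - n_k|+1))$ for $\chi,\theta \in G_{ki}$. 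For the Lipschitz part of the $B(G_n)$-norm — i.e. bounding $|\rho - \rho'|^{-1}\|(f_k\tilde R_{k,n})(\rho) - (f_k\tilde R_{k,n})(\rho')\|$ for $\rho, \rho' \in G_n$ — I would run the identical decomposition but with $\tilde D$-differences in the $\rho$-variable; the difference quotients $|\rho - \rho'|^{-1}\|\tilde D(x,\cdot,\rho^2) - \tilde D(x,\cdot,\rho'^2)\|$ again satisfy an estimate of the same order as $\tilde D$ itself by another application of Schwarz's lemma (this is the standard step alluded to before Lemma~\ref{lem:Schwarz}). Note that since $\rho, \rho' \in G_n$, which is bounded away from the zeros of $\cos$ and $\sin$, the exponential factors $\exp(|\tau|x)$ are harmless after being absorbed into constants on $[0,\pi]$.

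Summing all the pieces produces the bound $\|\tilde R_{k,n}(x)\|_{B(G_k)\to B(G_n)} \le C k\xi_k/(n(|n-k|+1))$. For the global bound on $\tilde R(x)$, I would use $\|(f\tilde R(x))_n\|_{B(G_n)} \le \sum_{k\ge 1}\|f_k\|_{B(G_k)}\|\tilde R_{k,n}(x)\|$, multiply by $n$, and use $\|f_k\|_{B(G_k)} \le \|f\|_B / k$ from the definition \eqref{defB} of $B$, to get $n\|(f\tilde R(x))_n\|_{B(G_n)} \le C\|f\|_B \sum_{k\ge 1} \xi_k/(|n-k|+1)$. By the Cauchy–Schwarz inequality this last sum is at most $\bigl(\sum_k (k\xi_k)^2\bigr)^{1/2}\bigl(\sum_k k^{-2}(|n-k|+1)^{-2}\bigr)^{1/2} \le C\Xi$, uniformly in $n$, which gives $\|\tilde R(x)\|_{B\to B} \le C\Xi$ and in particular finiteness (so that, together with the compactness already noted, the Fredholm theory applies).

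The main obstacle is the bookkeeping in the first step: one must organize the replacement of $f_k$-values by reference values so that every residual error is paired with exactly the right term of $\xi_k$ (the $\sum|\rho-\theta|$ term, the $k^{-3}\|\al(G_{ki}) - \tilde\al(G_{ki})\|$ term, or the $k^{-2}\|\al(G_k) - \tilde\al(G_k)\|$ term), and so that the powers of $k$ and $n$ come out matching the claimed $k\xi_k/(n(|n-k|+1))$ rather than something weaker. The subtlety is that $G_k$ is subdivided into $p_k$ pieces but $p_k$ is uniformly bounded (by $\max(r_1,r_2)$), so the number of subdivisions contributes only a constant; the real work is in tracking which difference — of $f_k$-values, or of $\tilde D$-arguments, or of weight sums — carries each factor of $\xi_k$, and verifying no term is left that behaves merely like $k^{-1}$ without an accompanying $\xi_k$. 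This is precisely the place where the grouping of eigenvalues by asymptotics, and the corresponding subdivision of $B$, pays off.
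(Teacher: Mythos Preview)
The paper does not prove this proposition: immediately after the statement it says ``The following proposition has been proved in \cite{Bond19-alg}'', so there is no in-paper argument to compare against. Your outline follows the standard telescoping strategy of the spectral-mappings method and is almost certainly the same route taken in \cite{Bond19-alg}. The deduction of the global bound $\|\tilde R(x)\|_{B\to B}\le C\Xi$ from the componentwise estimate via Cauchy--Schwarz is correct.

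There is, however, a genuine gap in your treatment of the main reference-value terms. You propose to estimate $f_k(\chi_i)\bigl(\al(G_{ki})-\tilde\al(G_{ki})\bigr)\tilde D(x,\chi_i^2,\rho^2)$ ``directly'' using \eqref{estD} and the middle term $k^{-3}\sum_i\|\al(G_{ki})-\tilde\al(G_{ki})\|$ of $\xi_k$. But \eqref{estD} gives only $\|\tilde D(x,\chi_i^2,\rho^2)\|\le C/(k\,n\,(|n-k|+1))$, while the $k^{-3}$ term of $\xi_k$ yields $\|\al(G_{ki})-\tilde\al(G_{ki})\|\le k^3\xi_k$; the product is $Ck^2\xi_k/(n(|n-k|+1))$, one power of $k$ too large. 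The fix is a second reduction step, which is exactly why $\xi_k$ has \emph{three} terms and Lemma~\ref{lem:Schwarz} has two $\tilde D$-estimates. After reducing to a reference $\chi_i\in G_{ki}$ in each subcollection, you must further replace each $\tilde D(x,\chi_i^2,\rho^2)$ by $\tilde D(x,\chi^2,\rho^2)$ for one common $\chi\in G_k$; the \emph{first} estimate of Lemma~\ref{lem:Schwarz} (for $\chi,\theta\in G_k$, giving an extra $k^{-2}$) controls these replacements and, multiplied by $\|\al(G_{ki})-\tilde\al(G_{ki})\|\le k^3\xi_k$, now lands on $Ck\xi_k/(n(|n-k|+1))$. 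What survives is $f_k(\chi)\bigl(\al(G_k)-\tilde\al(G_k)\bigr)\tilde D(x,\chi^2,\rho^2)$, and this is where the \emph{third} term $k^{-2}\|\al(G_k)-\tilde\al(G_k)\|$ of $\xi_k$ is used: combined with \eqref{estD} it gives the correct bound. You flag this bookkeeping as ``the main obstacle'' in your last paragraph, but the explicit argument you give mis-pairs the pieces; with the two-level reduction inserted, the rest of your sketch goes through.
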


\section{Main Equation Solvability}

The aim of this section is to prove that \textsc{Asymptotics} and \textsc{Completeness} conditions of Theorems~\ref{thm:sc} and~\ref{thm:scg} imply the unique solvability of the main equation~\eqref{main}.

Let $\mathcal S := \{ \la_{nk}, \al_{nk} \}_{n \ge 1, \, k = \overline{1, m}}$ be data from the class $SD$, satisfying the conditions of Theorem~\ref{thm:sc}. Then the integer $p \in [1, m-1]$, the reals $\{ z_k \}_{k = 1}^m$ and the matrices $\{ A^{(s)} \}_{s = 1}^m$ are specified by the asymptotic condition. Construct the matrix
$$
\tilde \Theta := \sum_{s \in J} z_s A^{(s)}, \quad
 J := \{ s = \overline{1, p} \colon s = 1 \:\: \text{or} \:\: s = p+1 \:\: \text{or} \:\: z_s \ne z_{s - 1} \}.
$$

Put $\tilde L = L(\frac{2}{\pi} \tilde \Theta, T, 0)$, where $T$ is the orthogonal projector from the asymptotics~\eqref{asymptal}. It is easy to check, that the spectral data $\{ \tilde \la_{nk}, \tilde \al_{nk} \}_{n \ge 1, \, k = \overline{1, m}}$ of the problem $\tilde L$ satisfy the asymptotic relations~\eqref{asymptla} and~\eqref{asymptal} with 
the same coefficients $\{ z_k \}_{k = 1}^m$, $T$, $T^{\perp}$ and $\{ A^{(s)} \}_{s = 1}^m$ as the collection $\mathcal S$ has.
Consequently, the estimates of Section~4 are valid for the problems $L$ and $\tilde L$.
The results of~\cite{Bond19-alg} yield that the operator $\tilde R(x)$, constructed in Section~2, is compact in $B$. Relying on these facts, we prove the following lemma.

\begin{lem} \label{lem:main}
Under the above assumptions, the main equation~\eqref{main} has the unique solution in $B$ for each $x \in [0, \pi]$.
\end{lem}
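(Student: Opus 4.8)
The plan is to establish unique solvability of the main equation \eqref{main} by showing that $\mathcal I + \tilde R(x)$ is a Fredholm operator of index zero (this follows from compactness of $\tilde R(x)$ on $B$, already stated at the end of Section~2 and reconfirmed via Proposition~\ref{prop:estR}), and then proving that its kernel is trivial for every $x \in [0,\pi]$. Once injectivity is known, the Fredholm alternative forces surjectivity, hence bijectivity, and the bounded inverse $(\mathcal I + \tilde R(x))^{-1}$ exists on $B$; since $\tilde\psi(x) \in B$, the main equation then has a unique solution $\psi(x) = \tilde\psi(x)(\mathcal I + \tilde R(x))^{-1} \in B$. The model problem $\tilde L = L(\tfrac{2}{\pi}\tilde\Theta, T, 0)$ has been chosen precisely so that \eqref{eqcoef} holds, so all the estimates of Section~4 — in particular Lemma~\ref{lem:Schwarz} and Proposition~\ref{prop:estR} — apply to the pair $(L, \tilde L)$.

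The crux is therefore the injectivity of $\mathcal I + \tilde R(x)$. First I would fix $x$ and suppose $\beta = \{\beta_n\}_{n \ge 1} \in B$ satisfies $\beta(\mathcal I + \tilde R(x)) = 0$, i.e. $\beta_n = -\sum_{k} \beta_k \tilde R_{k,n}(x)$ for all $n$. Following the method of spectral mappings, the standard device is to convert this discrete homogeneous equation into an assertion about a vector (or matrix) function: one builds, out of the components $\beta_n(\rho_{ljs})$ and the weight matrices $\al'_{ljs}$, a function of $x$ that is entire in $\la$ of suitable exponential type and decays along the real axis, and then invokes Completeness to conclude it vanishes identically. Concretely, I expect to pair the relation $\beta(\mathcal I + \tilde R(x)) = 0$ against the solutions $\tilde S(x,\la)$ and $S(x,\la)$, using the defining formulas for $\tilde R_{k,n}(x)$ in terms of $\tilde D(x,\cdot,\cdot)$ and the Wronskian identities, to produce a function whose expansion coefficients with respect to the system $\mathscr F$ all vanish. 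The Completeness hypothesis of Theorems~\ref{thm:sc} and~\ref{thm:scg} — that $\mathscr F$ is complete in $L_2((0,\pi);\mathbb C^m)$ for an appropriate choice of the orthonormal bases $\{\mathcal E_{nk}\}$ spanning $\Ran(\al_{n_1,k_1})$ — is exactly what is needed to kill this function. The rank condition in the class $SD$ (that $\rank(\al_{nk})$ equals the multiplicity of $\la_{nk}$) guarantees that vanishing of the paired function against every $\mathcal E_{nk}$ forces $\beta_n(\rho_{nk0}) = 0$, and the interpolation structure of $B(G_n)$ then propagates this to $\beta_n(\rho_{nk1}) = 0$ as well, whence $\beta = 0$.

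The main obstacle, I expect, is controlling the contour-integral / series manipulations needed to identify the paired function as an element of $L_2((0,\pi);\mathbb C^m)$ with the correct expansion coefficients, while simultaneously keeping track of the grouping $\{G_k\}$ and the subgrouping $\{G_{ki}\}$: the eigenvalues come in asymptotically coalescing clusters, so the function built from $\beta$ involves differences like $\tilde D(x,\rho_{lj0}^2,\rho^2) - \tilde D(x,\rho_{lj1}^2,\rho^2)$ and $\al(G_{ki}) - \tilde\al(G_{ki})$ rather than individual terms, and the convergence of the resulting series is exactly what the weighted $l_2$ quantity $\Xi$ in \eqref{defXi}–\eqref{defxi} is designed to bound. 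The second, more technical point is ensuring the paired function is entire in $\la$ with no spurious poles — the poles of the individual $\tilde D$-terms at the $\tilde\la_{lj}$ must cancel against the structure of the weight matrices $\al'_{ljs}$, which is where Definition~\ref{def:al} (the bookkeeping that puts $\al'_{n_j,k_j}=0$ for $j \ge 2$ within a multiple group) and the hypothesis $\mathcal S \in SD$ enter decisively. Apart from these two points the argument is the standard Fredholm-plus-completeness scheme of \cite[\S1.6]{FY01}, adapted to the Banach space $B$ of matrix sequences.
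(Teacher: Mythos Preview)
Your Fredholm-alternative setup and your intention to invoke \textsc{Completeness} are both correct and match the paper's strategy. However, there is a genuine gap: you do not explain the mechanism that takes you from the homogeneous equation $\beta(\mathcal I + \tilde R(x)) = 0$ to the statement that the ``paired function'' has vanishing coefficients against $\mathscr F$. The paper's device, which you are missing, is to introduce \emph{two} auxiliary matrix functions of $\la$: the entire function
\[
\gamma(\la) = -\sum_{k}\sum_{(l,j)}\bigl(\beta_k(\rho_{lj0})\al'_{lj0}\tilde D(x,\la_{lj0},\la) - \beta_k(\rho_{lj1})\al'_{lj1}\tilde D(x,\la_{lj1},\la)\bigr),
\]
satisfying $\gamma(\la_{ljs}) = \beta_k(\rho_{ljs})$, and a \emph{meromorphic} companion $\Gamma(\la)$ defined by the same formula with $\tilde D$ replaced by $\tilde E(x,\cdot,\cdot)$ from~\eqref{defE}, which brings in the Weyl solution $\tilde\Phi$. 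One then sets $F(\la) = \Gamma(\la)\gamma^{\dagger}(\bar\la)$, computes $\Res_{\la = \la_{lj0}} F(\la) = \gamma(\la_{lj0})\al_{lj0}\gamma^{\dagger}(\la_{lj0})$, and integrates $F$ over expanding circles $\Gamma_N$. The estimates~\eqref{estD}, \eqref{estE} and Lemma~\ref{lem:Schwarz} give $\|F(\la)\| \le C|\rho|^{-3}$ on $\Gamma_N$, so the contour integral tends to zero; the Residue Theorem then yields $\sum_{l,j}\gamma(\la_{lj0})\al'_{lj0}\gamma^{\dagger}(\la_{lj0}) = 0$, and positive semidefiniteness of $\al_{lj0}$ forces $\gamma(\la_{lj0})\al_{lj0} = 0$. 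Only \emph{after} this does Paley--Wiener and \textsc{Completeness} give $\gamma \equiv 0$, hence $\beta = 0$. Your sketch contains no analogue of $\Gamma$, $\tilde E$, or the contour/positivity argument, and ``pairing against $\tilde S$ and $S$'' cannot substitute for it --- note in particular that $S(x,\la)$ does not yet exist at this stage of the argument (it is constructed in Section~6 \emph{using} Lemma~\ref{lem:main}).

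Two smaller points. First, your worry about ``spurious poles of the individual $\tilde D$-terms at the $\tilde\la_{lj}$'' is misplaced: $\tilde D(x,\la,\mu)$ is entire in both variables (it equals $\int_0^x \tilde S^{\dagger}(t,\bar\la)\tilde S(t,\mu)\,dt$), so $\gamma$ is automatically entire and no cancellation is needed there. Second, your proposed final step --- using ``the interpolation structure of $B(G_n)$'' to pass from $\beta_n(\rho_{nk0}) = 0$ to $\beta_n(\rho_{nk1}) = 0$ --- is unnecessary and would not work directly; once $\gamma \equiv 0$ is established, one gets $\beta_k(\rho_{ljs}) = \gamma(\la_{ljs}) = 0$ for \emph{both} $s = 0,1$ simultaneously.
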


\begin{proof}
Fix $x \in [0, \pi]$. Let us prove that the operator $(I + \tilde R(x))$ has a bounded inverse. By virtue of Fredholm Theorem, it is sufficient to show that the homogeneous equation $f (I + \tilde R(x)) = 0$ has the only solution $f = 0$ in $B$.
Due to the definitions in Section~2, a solution $f = \{ f_k \}_{k \ge 1} \in B$ of the homogeneous equation satisfies the relations
$$
f_n(\rho) + \sum_{k = 1}^{\iy} \sum_{(l, j) \colon \rho_{lj0}, \rho_{lj1} \in G_k} (f_k(\rho_{lj0}) \al'_{lj0} \tilde D(x, \rho_{lj0}^2, \rho^2) - f_k(\rho_{lj1}) \al'_{lj1} \tilde D(x, \rho_{lj1}^2, \rho^2)) = 0, \quad \rho \in G_n,
$$
and the estimates
\begin{equation} \label{estf}
\|f_k(\rho)\| \le \frac{\| f \|_B}{k}, \quad \| f_k(\rho) - f_k(\theta) \| \le \frac{\| f \|_B |\rho - \theta|}{k}, \quad \rho, \theta \in G_k, \quad k \ge 1.
\end{equation}

Introduce the matrix functions 
\begin{gather*}
    \ga(\la) := -\sum_{k = 1}^{\iy} \sum_{(l, j) \colon \rho_{lj0}, \rho_{lj1} \in G_k} (f_k(\rho_{lj0}) \al'_{lj0} \tilde D(x, \la_{lj0}, \la) - f_k(\rho_{lj1}) \al'_{lj1} \tilde D(x, \la_{lj1}, \la)), \\
    \Gamma(\la) := -\sum_{k = 1}^{\iy} \sum_{(l, j) \colon \rho_{lj0}, \rho_{lj1} \in G_k} (f_k(\rho_{lj0}) \al'_{lj0} \tilde E(x, \la_{lj0}, \la) - f_k(\rho_{lj1}) \al'_{lj1} \tilde E(x, \la_{lj1}, \la)), \\
    F(\la) := \Gamma(\la) \ga^{\dagger}(\bar \la).
\end{gather*}

The matrix function $\ga(\la)$ is entire and $\ga(\la_{ljs}) = f_k(\rho_{ljs})$ for $\rho_{ljs} \in G_k$, $k \ge 1$. The matrix functions $\Gamma(\la)$ and $F(\la)$ are meromorphic with simple poles from the set $\{ \la_{ljs} \}_{l \ge 1, \, j = \overline{1, m}, \, s = 0, 1}$. Calculations yield
\begin{equation} \label{ResF}
\Res_{\la = \la_{lj0}} F(\la) = \ga(\la_{lj0}) \al_{lj0} \ga{\dagger}(\la_{lj0}), \quad
\Res_{\la = \la_{lj1}} F(\la) = 0, \quad
l \ge 1, \quad j = \overline{1, m},
\end{equation}
if $\la_{lj1} \not\in \{ \la_{nk} \}_{n \ge 1, \, k = \overline{1, m}}$. The opposite case requires minor technical modifications.

Using the relations~\eqref{estD}, \eqref{estE}, \eqref{defxi} and Lemma~\ref{lem:Schwarz}, we obtain the estimates
\begin{align} \label{estga}
    & \| \ga(\la) \| \le \frac{C \exp(|\tau|x)}{|\rho| + 1} \sum_{k = 1}^{\iy} \frac{\xi_k}{|\rho - n_k| + 1}, \quad \rho \in \mathbb C, \\ \label{estGa}
    & \| \Gamma(\la) \| \le C \exp(-|\tau|x) \sum_{k = 1}^{\iy} \frac{\xi_k}{|\rho - n_k| + 1}, \quad \rho \in G_{\de}.
\end{align}

Consider the contours $\Gamma_N := \{ \la \in \mathbb C \colon |\la| = (N + \tfrac{1}{4})^2 \}$, $N \in \mathbb N$, in the $\la$-plane with the counter-clockwise circuit. Clearly, $\la \in \Gamma_N$ implies $\rho = \sqrt \la \in G_{\de}$ for sufficiently large $N$ and sufficiently small $\de > 0$. Using the estimates~\eqref{estga} and~\eqref{estGa}, we obtain
$$
\| F(\la) \| \le \frac{C}{|\rho|^3}, \quad \la \in \Gamma_N.
$$
Hence, on the one hand, we have
$$
\lim_{N \to \iy} \oint_{\Gamma_N} F(\la) \, d\la = 0.
$$
On the other hand, Residue Theorem together with the relations~\eqref{ResF} imply
$$
\frac{1}{2 \pi i} \oint_{\Gamma_N} F(\la) \, d\la =  \sum_{(l, j) \colon |\la_{lj0}| < \left(N + \tfrac{1}{4}\right)^2} \ga(\la_{lj0}) \al'_{lj0} \ga^{\dagger}(\la_{lj0}).
$$
Taking the limit as $N \to \iy$, we arrive at the relation
$$
\sum_{l = 1}^{\iy} \sum_{j = 1}^m \ga(\la_{lj0}) \al'_{lj0} \ga^{\dagger}(\la_{lj0}) = 0.
$$
Since $\al_{lj0} = \al_{lj0}^{\dagger} \ge 0$ for all $l \ge 1$, $j = \overline{1, m}$, we conclude that
\begin{equation} \label{relga}
\ga(\la_{lj0}) \al_{lj0} = 0, \quad l \ge 1, \quad j = \overline{1, m}.
\end{equation}

Note that the function $\rho \ga(\rho^2)$ is entire and odd in the $\rho$-plane. In view of~\eqref{defXi} and \eqref{estga}, we have
$$
\| \ga(\la) \| = O\left( \rho^{-1} \exp(|\tau|\pi) \right), \quad |\rho| \to \iy, \qquad \rho \ga(\rho^2) \in L_2(\mathbb R; \mathbb C^{m \times m}).
$$
Therefore Paley-Wiener Theorem yields the representation
$$
\ga(\la) = \int_0^{\pi} \mathscr G(t) \frac{\sin \rho t}{\rho} \, dt, \quad \mathscr G \in L_2((0, \pi); \mathbb C^{m \times m}).
$$
The relation~\eqref{relga} implies that
$$
\ga(\la_{nk}) \mathcal E_{nk} = \int_0^{\pi} \mathscr G(t) \frac{\sin \rho_{nk} t}{\rho_{nk}} \mathcal E_{nk} \, dt = 0, \quad n \ge 1, \quad k = \overline{1, m},
$$
where $\{ \mathcal E_{nk} \}$ are the vectors from Definition~\ref{def:F}. Since the sequence $\mathscr F$ is complete in $L_2((0, \pi); \mathbb C^m)$, we conclude that $\mathscr G = 0$ in $L_2((0, \pi); \mathbb C^{m \times m})$. Consequently, $\ga(\la) \equiv 0$. Hence $f = 0$ in $B$, which yields the claim of the lemma.
\end{proof}

\begin{cor}
Suppose that data $\mathcal S \in SD$ satisfy the conditions 1-2 of Theorem~\ref{thm:scg}. Let $\tilde L$ be the auxiliary problem constructed by the formulas~\eqref{modelg}. Then the main equation~\eqref{main} is uniquely solvable.
\end{cor}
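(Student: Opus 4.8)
The plan is to deduce the corollary directly from Lemma~\ref{lem:main}: its proof applies essentially verbatim to the graph-case setting of Theorem~\ref{thm:scg} once one checks that the auxiliary problem $\tilde L$ built by~\eqref{modelg} is an admissible comparison problem, i.e. its spectral data obey the asymptotics~\eqref{asymptla}, \eqref{asymptal} with exactly the same coefficients $p = 1$, $T$, $T^{\perp}$, $\{ z_k \}_{k=1}^m$, $\{ A^{(s)} \}_{s=1}^m$ that hypothesis~1 of Theorem~\ref{thm:scg} prescribes for $\mathcal S$. Establishing this matching is the only substantive step.

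To verify it I would first compute, for $\tilde L = L(\tfrac{2}{\pi}\Omega, T, \tilde h T)$ with $\Omega = \diag\{\om_j\}_{j=1}^m$ and $\tilde h = \tfrac{1}{m}\sum_j \om_j - z_1$, that $\tilde\Omega = \tfrac{1}{2}\int_0^{\pi}\tfrac{2}{\pi}\Omega\,dx = \Omega$. Since in the graph case $T = \tfrac{1}{m}\mathbf{1}\mathbf{1}^{\dagger}$ is the orthogonal projector onto $\mathbf{1} = [1,\dots,1]^{\dagger}$, one gets $T\Omega T = \bigl(\tfrac{1}{m}\sum_j\om_j\bigr)T$ and $T^{\perp}(\tilde h T)T^{\perp} = 0$, hence $\tilde\Theta := T(\tilde\Omega - \tilde h T)T + T^{\perp}\tilde\Omega T^{\perp} = z_1 T + T^{\perp}\Omega T^{\perp}$. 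This matrix leaves $\Ran T$ and $\Ran T^{\perp}$ invariant, acting as multiplication by $z_1$ on the former and as the compression of $\Omega$ to $\Ran T^{\perp}$ on the latter; its eigenvalues are therefore $z_1$ together with the roots of the polynomial~\eqref{defP2}, that is, precisely $\{ z_k \}_{k=1}^m$. By Propositions~\ref{prop:asymptla} and~\ref{prop:asymptal} the spectral data of $\tilde L$ then satisfy~\eqref{asymptla}, \eqref{asymptal} with these $\{ z_k \}$, with $T$, $T^{\perp}$, and — because in the graph case the projectors $A^{(s)}$ depend only on $\{ \om_j \}$ through~\eqref{defAs}, \eqref{defA} — with the same $\{ A^{(s)} \}_{s=1}^m$ as $\mathcal S$. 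Thus~\eqref{eqcoef} holds for $\mathcal S$ and $\tilde L$, so $\Xi < \iy$ in~\eqref{defXi} and the entire machinery of Section~4 (Lemma~\ref{lem:Schwarz}, Proposition~\ref{prop:estR}, compactness of $\tilde R(x)$ on $B$) is available.

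With that in hand, I expect the argument of Lemma~\ref{lem:main} to carry over with no change: it uses only the Section~4 estimates, the compactness of $\tilde R(x)$, the membership $\mathcal S \in SD$ — which gives $\al_{lj0} = \al_{lj0}^{\dagger} \ge 0$ and hence~\eqref{relga} from the vanishing of $\oint_{\Gamma_N} F(\la)\,d\la$ — and the \textsc{Completeness} of $\mathscr F$, which is exactly hypothesis~2 of Theorem~\ref{thm:scg}. The Fredholm alternative then makes $\mathcal I + \tilde R(x)$ boundedly invertible for every $x \in [0,\pi]$, and~\eqref{main} has a unique solution in $B$. I anticipate no genuine difficulty beyond the bookkeeping of the second paragraph: the one point needing care is that the concrete diagonal model~\eqref{modelg} — chosen in place of the model $L(\tfrac{2}{\pi}\tilde\Theta, T, 0)$ used inside the proof of Lemma~\ref{lem:main}, so that $\tilde\psi(x)$ be diagonal — still reproduces the asymptotic coefficients $\{ z_k \}$, $\{ A^{(s)} \}$, which is the computation sketched above.
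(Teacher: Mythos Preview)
Your proposal is correct and is exactly the route the paper intends: the corollary is stated without proof immediately after Lemma~\ref{lem:main}, and your observation that the only thing to check is~\eqref{eqcoef} for the diagonal model~\eqref{modelg} (after which the argument of Lemma~\ref{lem:main} applies verbatim) is precisely the implicit content. Your computation of $\tilde\Theta = z_1 T + T^{\perp}\Omega T^{\perp}$ and the resulting matching of $\{z_k\}$ and $\{A^{(s)}\}$ is the right verification.
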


\section{Proof of Sufficiency}

In this section, several lemmas are provided, which finish the proofs of Theorems~\ref{thm:sc} and~\ref{thm:scg}. By using the solution $\psi(x)$ of the main equation~\eqref{main}, we construct $Q \in L_2((0, \pi); \mathbb C^{m \times m})$ and $H \in \mathbb C^{m \times m}$. Further we prove that the given collection $\mathcal S$ is the spectral data of the boundary value problem $L = L(Q(x), T, H)$.

Let data $\mathcal S = \{ \la_{nk}, \al_{nk} \}_{n \ge 1, \, k = \overline{1, m}}$ fulfill the conditions of Theorem~\ref{thm:sc}, and let $\tilde L$ be the model problem constructed in the previous section. By Lemma~\ref{lem:main} the main equation~\eqref{main}, constructed by $\mathcal S$ and $\tilde L$, has the unique solution $\psi(x) = \{ \psi_n(x) \}_{n \ge 1} \in B$ for each fixed $x \in [0, \pi]$.
Relying on~\eqref{defXi}, \eqref{defxi} and~\eqref{defB}, we prove the following lemma.

\begin{lem} \label{lem:estpsi}
For $n \ge 1$, the operator functions $\psi_n \colon [0, \pi] \to B(G_n)$ are continuously differentiable with respect to $x \in [0, \pi]$ and satisfy the estimates
\begin{gather} \label{estpsin1}
    \| \psi_n^{(\nu)}(x) \|_{B(G_n)} \le C n^{\nu - 1}, \quad \nu = 0, 1, \quad x \in [0, \pi], \\ \label{estpsin2}
    \| \psi_n(x) - \tilde \psi_n(x) \|_{B(G_n)} \le \frac{C \Xi \eta_n}{n}, \quad
    \| \psi'_n(x) - \tilde \psi'_n(x) \|_{B(G_n)} \le \frac{C \Xi}{n}, \quad x \in [0, \pi], \\ \label{defeta}
    \eta_n := \left(\sum_{k = 1}^{\iy} \frac{1}{k^2 (|n - k| + 1)^2} \right)^{1/2}, \quad \{ \eta_n \}_{n \ge 1} \in l_2.
\end{gather}
\end{lem}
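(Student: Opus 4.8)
The plan is to derive both the estimates in \eqref{estpsin1} and the difference estimates in \eqref{estpsin2} from the main equation \eqref{main} rewritten in the resolvent form $\psi(x) = \tilde\psi(x)(\mathcal I + \tilde R(x))^{-1}$, using that $(\mathcal I + \tilde R(x))^{-1}$ is a bounded operator on $B$ with norm bounded uniformly in $x \in [0,\pi]$ (this follows from Lemma~\ref{lem:main} together with the uniform compactness estimate $\|\tilde R(x)\|_{B \to B} \le C\Xi$ of Proposition~\ref{prop:estR} and a standard uniform-invertibility argument). First I would record that $\tilde\psi_n(x)(\rho) = \tilde S(x,\rho^2)$ for $\rho \in G_n$, so by the asymptotics \eqref{asymptS} for $\tilde S$ and the fact that $|\mathrm{Im}\,\rho|$ is bounded on each $G_n$ (with bound uniform in $n$ after the shift ensuring $\tilde\la_{nk} \ge 0$), one gets $\|\tilde\psi_n(x)\|_{B(G_n)} \le C n^{-1}$ and $\|\tilde\psi_n'(x)\|_{B(G_n)} \le C$; the Lipschitz-in-$\rho$ part of the $B(G_n)$-norm is controlled by differentiating the asymptotics in $\rho$ and using $|\rho - \theta| \le C/n$ for $\rho,\theta \in G_n$. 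Hence $\tilde\psi \in B$ with $\|\tilde\psi\|_B \le C$, and the same bound transfers to $\psi = \tilde\psi(\mathcal I + \tilde R(x))^{-1}$, giving \eqref{estpsin1} for $\nu = 0$.

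For $\nu = 1$, I would differentiate the main equation with respect to $x$. The kernels $\tilde D(x,\cdot,\cdot)$ entering $\tilde R(x)$ are smooth in $x$ with $x$-derivatives satisfying estimates of the same shape as \eqref{estD} (since $\partial_x \langle \tilde S^\dagger, \tilde S\rangle$ is again a Wronskian-type expression controlled by \eqref{asymptS}), so $\tilde R'(x) := \frac{d}{dx}\tilde R(x)$ is bounded on $B$ uniformly in $x$, and likewise $\tilde\psi'(x) \in B$ with $\|\tilde\psi'(x)\|_B \le C$. Differentiating $\psi(x)(\mathcal I + \tilde R(x)) = \tilde\psi(x)$ yields
$$
\psi'(x)(\mathcal I + \tilde R(x)) = \tilde\psi'(x) - \psi(x)\tilde R'(x),
$$
so $\psi'(x) = \bigl(\tilde\psi'(x) - \psi(x)\tilde R'(x)\bigr)(\mathcal I + \tilde R(x))^{-1}$, whence $\|\psi'(x)\|_B \le C$, i.e. $\|\psi_n'(x)\|_{B(G_n)} \le C/n$; this is slightly weaker than \eqref{estpsin1} with $\nu=1$, so to get the stated $\|\psi_n'(x)\|_{B(G_n)} \le C$ one must instead track the scaling more carefully — the operator $\tilde R'(x)$ actually maps into a space with one fewer power of $n$ decay, or equivalently one estimates $\psi_n'$ directly from the $n$-th component equation $\psi_n'(x)(\rho) = \tilde\psi_n'(x)(\rho) - (\psi'(x)\tilde R(x))_n(\rho) - (\psi(x)\tilde R'(x))_n(\rho)$ and uses Proposition~\ref{prop:estR} to absorb the sums. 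Continuous differentiability of $x \mapsto \psi_n(x)$ follows from uniform convergence of the relevant series, which the estimates of Lemma~\ref{lem:Schwarz} and Proposition~\ref{prop:estR} supply.

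For \eqref{estpsin2} I would subtract the trivial identity $\tilde\psi(x) = \tilde\psi(x)(\mathcal I + \tilde{\tilde R}(x))$ — here $\tilde{\tilde R}(x)$ is the operator built from the model problem's own spectral data, which however equals zero only in degenerate cases, so more precisely I would use that $\psi(x) - \tilde\psi(x) = \bigl(\tilde\psi(x) - \psi(x)\tilde R(x) - \tilde\psi(x)\bigr) + \text{(correction)}$; cleanly, from $\psi(x)(\mathcal I + \tilde R(x)) = \tilde\psi(x)$ we get $\psi(x) - \tilde\psi(x) = -\psi(x)\tilde R(x) = -\tilde\psi(x)(\mathcal I + \tilde R(x))^{-1}\tilde R(x)$, and then estimate the $n$-th component $(\psi(x)\tilde R(x))_n = \sum_k \psi_k(x)\tilde R_{k,n}(x)$ using $\|\psi_k(x)\|_{B(G_k)} \le C/k$ and the sharp per-block bound $\|\tilde R_{k,n}(x)\|_{B(G_k)\to B(G_n)} \le Ck\xi_k/(n(|n-k|+1))$ from Proposition~\ref{prop:estR}. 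This produces $\|(\psi(x)\tilde R(x))_n\|_{B(G_n)} \le \frac{C}{n}\sum_k \frac{\xi_k}{|n-k|+1}$, and a Cauchy–Schwarz split $\sum_k \frac{\xi_k}{|n-k|+1} \le \Xi^{1/2}\bigl(\sum_k \frac{1}{k^2(|n-k|+1)^2}\bigr)^{1/2} = \Xi^{1/2}\eta_n$ — wait, more directly $\sum_k \frac{\xi_k}{|n-k|+1} \le \bigl(\sum_k (k\xi_k)^2\bigr)^{1/2}\bigl(\sum_k \frac{1}{k^2(|n-k|+1)^2}\bigr)^{1/2} = \Xi\,\eta_n$ by definition \eqref{defXi} and \eqref{defeta} — gives exactly $\|\psi_n(x) - \tilde\psi_n(x)\|_{B(G_n)} \le C\Xi\eta_n/n$. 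The derivative version \eqref{estpsin2} is analogous using $\tilde R'(x)$ in place of $\tilde R(x)$, together with the bound on $\psi'$; here one gets $C\Xi/n$ rather than $C\Xi\eta_n/n$ because the $x$-differentiated kernel loses one power of decay (as reflected in the $\nu$-dependent exponent in \eqref{estpsin1}), so the sum over $k$ no longer carries the extra $\eta_n$ gain. The main obstacle is bookkeeping the interplay of the three decay scales — the $1/n$ in the $B$-norm, the $k$-factor in $\tilde R_{k,n}$, and the $\xi_k \in \ell^1$-after-weighting condition \eqref{defXi} — so that the Cauchy–Schwarz splits land on exactly $\eta_n$ and $\Xi$; in particular the derivative estimates require checking that differentiating in $x$ costs precisely one power of $n$ in each kernel estimate and no more, which is where the asymptotic formulas \eqref{asymptS} and the sharpened Schwarz-lemma bounds of Lemma~\ref{lem:Schwarz} must be invoked with care.
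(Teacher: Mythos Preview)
Your approach to the undifferentiated estimates is correct and in fact a bit more direct than the paper's. The paper introduces $R(x) := \mathcal I - (\mathcal I + \tilde R(x))^{-1}$ and then runs a two-step bootstrap on the block norms $\|R_{k,n}(x)\|$ (first obtaining $\|R_{k,n}\| \le Ck\xi_k/n$ from one of the resolvent identities, then feeding this back into the other identity to get the sharper bound $\|R_{k,n}\| \le \frac{Ck\xi_k}{n}\bigl(\frac{1}{|n-k|+1} + \Xi\eta_n\bigr)$), and finally uses $\psi_n - \tilde\psi_n = -\sum_k \tilde\psi_k R_{k,n}$. You bypass the bootstrap entirely by writing $\psi_n - \tilde\psi_n = -\sum_k \psi_k \tilde R_{k,n}$ and using the already-available sharp bound on $\tilde R_{k,n}$ from Proposition~\ref{prop:estR} together with $\|\psi_k\|_{B(G_k)} \le C/k$. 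Your Cauchy--Schwarz split $\sum_k \frac{\xi_k}{|n-k|+1} \le \Xi\,\eta_n$ is exactly right and lands on the stated bound.

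There is, however, a genuine gap in the $\nu=1$ part. The claim ``$\tilde\psi'(x) \in B$ with $\|\tilde\psi'(x)\|_B \le C$'' is false: from $\tilde S'(x,\rho^2) = \cos\rho x\, I + O(\rho^{-1})$ one gets $\|\tilde\psi_n'(x)\|_{B(G_n)} \le C$ (constant in $n$), so $n\|\tilde\psi_n'(x)\|_{B(G_n)}$ is unbounded and $\tilde\psi'(x) \notin B$. Consequently the line ``$\|\psi'(x)\|_B \le C$, i.e.\ $\|\psi_n'(x)\|_{B(G_n)} \le C/n$'' has no support, and your subsequent remark that $C/n$ is ``slightly weaker'' than the stated $C$ is backwards (for $n\ge1$ the bound $C/n$ is stronger). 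The fix is to stay componentwise: although $\tilde\psi' \notin B$, the element $h := \tilde\psi'\tilde R + \psi\tilde R'$ does lie in $B$, because
\[
\|(\tilde\psi'\tilde R)_n\| \le \sum_k C\cdot\frac{Ck\xi_k}{n(|n-k|+1)} \le \frac{C\Xi}{n}, \qquad
\|(\psi\tilde R')_n\| \le \sum_k \frac{C}{k}\cdot\frac{Ck\xi_k}{n} = \frac{C}{n}\sum_k \xi_k \le \frac{C\Xi}{n},
\]
using $\|\tilde R'_{k,n}\| \le Ck\xi_k/n$ (no $(|n-k|+1)^{-1}$ factor survives differentiation --- this is the ``costs one power of $n$'' point you allude to). From $\psi' - \tilde\psi' = -h(\mathcal I + \tilde R)^{-1}$ and $\|h\|_B \le C\Xi$ you then get $\|\psi_n' - \tilde\psi_n'\|_{B(G_n)} \le C\Xi/n$, and \eqref{estpsin1} for $\nu=1$ follows from this and $\|\tilde\psi_n'\|_{B(G_n)} \le C$.
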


\begin{proof}
Analogously to \cite[Lemma~4.3]{Bond19-alg}, we prove that, for every $k, n \ge 1$, the operator function $\tilde R_{k,n}(x)$ is continuously differentiable by $x$, and
\begin{equation} \label{difR}
\| \tilde R'_{k, n}(x) \|_{B(G_n) \to B(G_n)} \le \frac{C k \xi_k}{n}, \quad x \in [0, \pi],
\end{equation}
Fix $x_0 \in [0, \pi]$. By using~\eqref{difR} and~\eqref{defXi}, one can easily show that 
\begin{equation} \label{Rx0}
\| \tilde R(x) - \tilde R(x_0) \|_{B \to B} \le C \Xi |x - x_0|, \quad x \in [0, \pi].
\end{equation}

Define the operator $P(x) := (\mathcal I + \tilde R(x))^{-1}$. Relying on the estimate~\eqref{Rx0}, we prove that $P(x)$ is continuous on $[0, \pi]$. Consequently, $\| P(x) \|_{B \to B} \le C$, $x \in [0, \pi]$. Define $R(x) := \mathcal I - P(x)$, $R(x) = [R_{k,n}(x)]_{k, n \ge 1}$. Clearly, $\| R(x) \|_{B \to B} \le C$, $x \in [0, \pi]$, and
\begin{equation} \label{RR}
(\mathcal I - R(x)) (\mathcal I + \tilde R(x)) = (\mathcal I + \tilde R(x)) (\mathcal I - R(x)) = \mathcal I.
\end{equation}
The latter relations can be rewritten in the form
\begin{align} \label{Rkn1}
    & R_{k, n}(x) = \tilde R_{k, n}(x) - \sum_{l = 1}^{\iy} R_{k, l}(x) \tilde R_{l, n}(x), \\ \label{Rkn2}
    & R_{k, n}(x) = \tilde R_{k, n}(x) - \sum_{l = 1}^{\iy} \tilde R_{k, l}(x) R_{l, n}(x),
\end{align}
where $n, k \ge 1$. Using~\eqref{Rkn2}, Proposition~\ref{prop:estR} and the estimate $\| R(x) \|_{B \to B} \le C$, we get 
\begin{equation} \label{Rkn3}
    \| R_{k, n}(x) \|_{B(G_k) \to B(G_n)} \le \frac{C k \xi_k}{n}, \quad k, n \ge 1, \quad x \in [0, \pi].
\end{equation}
Next, using~\eqref{Rkn3} together with~\eqref{Rkn1} and Proposition~\ref{prop:estR}, we arrive at the estimate
\begin{equation} \label{Rkn4}
    \| R_{k, n}(x) \|_{B(G_k) \to B(G_n)} \le \frac{C k \xi_k}{n} \left( \frac{1}{|n - k| + 1} + \Xi \eta_n \right), 
\end{equation}
where $n, k \ge 1$, $x \in [0, \pi]$, and $\eta_n$ is defined in \eqref{defeta}.

Since $\psi(x) = \tilde \psi(x) (\mathcal I - R(x))$, we have
\begin{equation} \label{relpsi}
    \psi_n(x) = \tilde \psi_n(x) - \sum_{k = 1}^{\iy} \tilde \psi_k(x) R_{k, n}(x), \quad n \ge 1.
\end{equation}
Using~\eqref{Rkn4}, \eqref{relpsi} and the estimate $\| \tilde \psi_n(x) \|_{B(G_n)} \le \frac{C}{n}$, $n \ge 1$, we obtain~\eqref{estpsin1} for $\nu = 0$ and~\eqref{estpsin2}.

One can similarly prove~\eqref{estpsin1} for $\nu = 1$, differentiating the relation~\eqref{relpsi}. The necessary estimate for $\| R'_{k, n}(x) \|_{B(G_k) \to B(G_n)}$ can be obtained by differentiating~\eqref{RR}.
\end{proof}

Define the matrix functions $S_{ljs}(x) := \psi_k(x)(\rho_{ljs})$ for $l \ge 1$, $j = \overline{1, m}$, $s = 0, 1$, where $k$ is such that $\rho_{ljs} \in G_k$. Also define
\begin{align} \label{defS}
    & S(x, \la) := \tilde S(x, \la) - \sum_{l = 1}^{\iy} \sum_{j = 1}^m (S_{lj0}(x) \al'_{lj0} \tilde D(x, \la_{lj0}, \la) - S_{lj1}(x) \al'_{lj1} \tilde D(x, \la_{lj1}, \la), \\ \label{defPhi}
    & \Phi(x, \la) := \tilde \Phi(x, \la) - \sum_{l = 1}^{\iy} \sum_{j = 1}^m (S_{lj0}(x) \al'_{lj0} \tilde E(x, \la_{lj0}, \la) - S_{lj1}(x) \al'_{lj1} \tilde E(x, \la_{lj1}, \la)), \\ \label{defeps}
    & \eps_0(x) := \sum_{l = 1}^{\iy} \sum_{j = 1}^m (S_{lj0}(x) \al'_{lj0} \tilde S^{\dagger}_{lj0}(x) - S_{lj1}(x) \al'_{lj1} \tilde S^{\dagger}_{lj1}(x)), \quad \eps(x) := -2 \eps_0'(x).
\end{align}

Using the relations~\eqref{main} and~\eqref{defS}, one can easily show that
\begin{equation} \label{relSlj0}
    S(x, \la_{lj0}) = S_{lj0}(x), \quad l \ge 1, \quad j = \overline{1, m}.
\end{equation}

Following the algorithm for solving Inverse Problem~\ref{ip:matr} from \cite{Bond19-alg}, we find 
\begin{equation} \label{defQH}
    Q(x) := \tilde Q(x) + \eps(x), \quad H := \tilde H - T \eps_0(\pi) T.
\end{equation}

Using Lemma~\ref{lem:estpsi}, \eqref{asymptS}, \eqref{defXi} and~\eqref{defxi}, we obtain the following result.

\begin{lem} \label{lem:eps}
The series in~\eqref{defeps} converges uniformly with respect to $x \in [0, \pi]$ to an absolutely continuous matrix functions. Moreover, $\eps(x) \in L_2((0, \pi); \mathbb C^{m \times m})$ and
$\| \eps \|_{L_2} \le C \Xi$.
Hence $Q(x)$ defined by~\eqref{defQH} belongs to $L_2((0, \pi); \mathbb C^{m \times m})$.
\end{lem}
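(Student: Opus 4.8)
The plan is to establish the three assertions of Lemma~\ref{lem:eps} in sequence: uniform convergence of the series in~\eqref{defeps}, absolute continuity together with the $L_2$-bound on $\eps(x) = -2\eps_0'(x)$, and finally the conclusion for $Q(x)$. Throughout I would use the estimates of Lemma~\ref{lem:estpsi} to control the matrix factors $S_{ljs}(x)$ and $\tilde S_{ljs}(x)$, the asymptotics~\eqref{asymptS} for $\tilde S$, and the asymptotics~\eqref{asymptla}--\eqref{asymptal} for $\rho_{ljs}$ and $\al'_{ljs}$, all packaged into the quantities $\xi_k$ and $\Xi$ via~\eqref{defXi}--\eqref{defxi}.

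First I would rewrite the series in~\eqref{defeps} by grouping the terms according to the collections $\{G_k\}_{k\ge 1}$, exactly as the operator $\tilde R(x)$ is organized. For a fixed $k$, the inner sum over the pairs $(l,j)$ with $\rho_{lj0},\rho_{lj1}\in G_k$ is a difference $S_{lj0}\al'_{lj0}\tilde S^\dagger_{lj0} - S_{lj1}\al'_{lj1}\tilde S^\dagger_{lj1}$; since $S_{lj0}$ and $S_{lj1}$ are both equal to $\psi_k(x)$ evaluated at nearby points of $G_k$, and similarly for $\tilde S$, I can add and subtract to express each block as a sum of a term proportional to $\|\al'_{lj0}-\al'_{lj1}\|$ (controlled by $\|\al(G_k)-\tilde\al(G_k)\|$ and the finer sums in~\eqref{defxi}) plus terms proportional to the differences $\psi_k(x)(\rho_{lj0})-\psi_k(x)(\rho_{lj1})$ and $\tilde S^\dagger$-differences, each of which is $O(|\rho_{lj0}-\rho_{lj1}|)$ by the Lipschitz bound in~\eqref{estf}-type estimates from Lemma~\ref{lem:estpsi} and~\eqref{asymptS}. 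The upshot is a bound $\|(\text{$k$-th block})\|\le C\xi_k/k$ (the $1/k$ coming from $\|\psi_k\|_{B(G_k)}\le C/k$ together with $\|\tilde S_{ljs}\|\le C/k$ and $\|\al'_{ljs}\|\le Ck$), so the full series is dominated by $C\sum_k \xi_k/k$, which is finite and in fact $\le C\Xi$ by Cauchy--Schwarz against~\eqref{defXi}. This gives uniform convergence of~\eqref{defeps} to a continuous, indeed bounded, matrix function $\eps_0(x)$.

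Next, to handle $\eps_0'(x)$, I would differentiate the series termwise, justifying this by showing the differentiated series converges uniformly as well. Differentiating a product $S_{lj0}(x)\al'_{lj0}\tilde S^\dagger_{lj0}(x)$ by Leibniz produces terms with one of the two factors replaced by its $x$-derivative; here $\psi_k^{(1)}(x)$ satisfies $\|\psi_k'(x)\|_{B(G_k)}\le C$ (one power of $n$ worse than $\psi_k$, cf.~\eqref{estpsin1}) and $\tilde S'_{ljs}(x) = \cos\rho_{ljs}x\, I + O(\rho^{-1}\ldots)$ is $O(1)$ by~\eqref{asymptS}. The delicate point is that a single differentiated factor costs a full power of $k$, so a block like $S'_{lj0}\al'_{lj0}\tilde S^\dagger_{lj0}$ would only be $O(k\xi_k)$, and $\sum_k k\xi_k$ need not converge in $L_\infty$. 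This is why the lemma asserts $\eps(x)\in L_2$ rather than $L_\infty$: I would instead estimate the $L_2(0,\pi)$ norm of the differentiated series. Using that for fixed $(l,j)$ the function $x\mapsto \cos^2\rho_{lj}x$ and similar oscillatory products have $L_2(0,\pi)$-norms that are $O(1)$, and more importantly that the main oscillatory parts coming from distinct blocks are almost orthogonal in $L_2(0,\pi)$ (the frequencies $\rho_{lj}$ lie near distinct integers $n_k$), a Bessel-type / almost-orthogonality argument — the matrix-valued analogue of the standard step in \cite[Section~1.6]{FY01} — yields $\|\eps\|_{L_2}^2\le C\sum_k(k\xi_k)^2 = C\Xi^2$, hence $\|\eps\|_{L_2}\le C\Xi$. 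Absolute continuity of $\eps_0$ on $[0,\pi]$ follows since $\eps_0$ is the uniform limit of absolutely continuous functions whose derivatives converge in $L_1(0,\pi)$ (which $L_2$-convergence on a finite interval implies).

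Finally, the statement for $Q$ is immediate: $\tilde Q(x) = \frac{2}{\pi}\tilde\Theta$ (or $\frac{2}{\pi}\Omega$ in the graph case) is a constant Hermitian matrix, hence in $L_2((0,\pi);\mathbb C^{m\times m})$, and $Q(x)=\tilde Q(x)+\eps(x)$ by~\eqref{defQH} with $\eps\in L_2$, so $Q\in L_2((0,\pi);\mathbb C^{m\times m})$. I expect the main obstacle to be the second part: getting the $L_2$-bound on $\eps$ rather than a weaker pointwise bound requires exploiting almost-orthogonality of the oscillatory building blocks across the groups $\{G_k\}$, rather than crude term-by-term majorization, and one must track the matrix factors carefully through the grouping so that the gain of $1/k$ from $\|\psi_k\|_{B(G_k)}$ is preserved and combined correctly with the $\xi_k$ weights. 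The differentiation-under-the-sum step and the verification that $\eps_0$ is genuinely absolutely continuous (not merely continuous) are the places where this almost-orthogonality estimate does the real work.
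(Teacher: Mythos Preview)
Your approach is correct and matches what the paper intends: the paper gives no detailed proof of Lemma~\ref{lem:eps} but simply points to Lemma~\ref{lem:estpsi}, \eqref{asymptS}, \eqref{defXi} and \eqref{defxi}, which are exactly the ingredients you invoke, combined via the grouping $\{G_k\}$ and the $L_2$ almost-orthogonality argument familiar from the scalar case in \cite[Section~1.6]{FY01}. One small slip in your power counting: by~\eqref{asymptal} the weight matrices satisfy $\|\al'_{ljs}\|\le Ck^2$, not $Ck$, so the $k$-th block of $\eps_0$ is $O(\xi_k)$ rather than $O(\xi_k/k)$; this does not affect your conclusions, since $\sum_k\xi_k\le C\Xi$ still holds by Cauchy--Schwarz against $\sum_k k^{-2}$, and the bound $\|\eps\|_{L_2}^2\le C\sum_k(k\xi_k)^2=C\Xi^2$ is unchanged.
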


Put $L = L(Q(x), T, H)$, where $Q(x)$ and $H$ are constructed by~\eqref{defQH}.
Our next goal is to show that $S(x, \la)$ is the sine-type matrix solution of eq.~\eqref{eqv} with the matrix potential $Q(x)$, and that $\Phi(x, \la)$ is the Weyl solution of the problem $L$.

\begin{lem} \label{lem:relSPhi}
The following relations hold
\begin{gather} \label{eqvS}
    -S''(x, \la) + Q(x) S(x, \la) = \la S(x, \la), \quad
    -\Phi''(x, \la) + Q(x) \Phi(x, \la) = \la \Phi(x, \la), \\ \label{icS}
    S(0, \la) = 0, \quad S'(0, \la) = I, \quad \Phi(0, \la) = I.
\end{gather}
\end{lem}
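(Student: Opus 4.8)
The plan is to verify the relations \eqref{icS} directly and then to establish the differential equations \eqref{eqvS} by using the explicit series representations \eqref{defS} and \eqref{defPhi} together with the fact, already recorded in \eqref{relSlj0}, that the matrices $S_{ljs}(x)$ are the values of the solution $\psi(x)$ of the main equation. First I would treat the initial conditions: setting $x = 0$ in \eqref{defS}, the kernel $\tilde D(0,\la_{ljs},\la)$ vanishes because $S(0,\la)=0$ forces the Wronskian $\langle S^{\dagger}(0,\bar\la),S(0,\mu)\rangle$ to be zero, so $S(0,\la)=\tilde S(0,\la)=0$; differentiating \eqref{defS} in $x$, evaluating at $0$, and using $S'(0,\la)=I$ for $\tilde S$ together with the cancellation coming from $\eps_0(0)=0$ (visible from \eqref{defeps} since each $\tilde S_{ljs}(0)=0$), one gets $S'(0,\la)=I$. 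The relation $\Phi(0,\la)=I$ follows the same way from \eqref{defPhi}, using $\tilde\Phi(0,\la)=I$ and the vanishing of $\tilde E(0,\la_{ljs},\la)$, which again is a consequence of $S(0,\bar\la)=0$ in the numerator of \eqref{defE}.

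For the differential equations, the standard device of the method of spectral mappings is to compute $-S''(x,\la)+ \bigl(\tilde Q(x)+\eps(x)\bigr)S(x,\la)-\la S(x,\la)$ by term-by-term differentiation of \eqref{defS}. Each summand $S_{ljs}(x)\,\al'_{ljs}\,\tilde D(x,\la_{ljs},\la)$ must be differentiated twice; here one uses that $\tilde D(x,\mu,\la)$, as a function of $x$, is built from $\tilde S(x,\mu)$ and $\tilde S(x,\la)$, both of which solve the model equation with potential $\tilde Q$, so $\partial_x^2$ acting on $\tilde D$ produces lower-order terms governed by $\tilde Q$ plus a "diagonal" contribution from the jump in the Wronskian identity $\partial_x\langle \tilde S^{\dagger}(x,\bar\mu),\tilde S(x,\la)\rangle=(\mu-\la)\tilde S^{\dagger}(x,\bar\mu)\tilde S(x,\la)$, which after division by $\la-\mu$ yields exactly $-\tilde S^{\dagger}(x,\bar\mu)\tilde S(x,\la)$. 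Collecting the terms, the contributions combine into $\eps_0'(x)$-type sums; matching them against the definition $\eps(x)=-2\eps_0'(x)$ and $Q(x)=\tilde Q(x)+\eps(x)$ from \eqref{defQH}, one finds that the bracket vanishes. The argument for $\Phi$ is identical, with $\tilde D$ replaced by $\tilde E$ and $\tilde S$ by $\tilde\Phi$ in the relevant slot, noting $\tilde\Phi$ also solves the model equation.

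The estimates of Lemma~\ref{lem:estpsi} and Lemma~\ref{lem:eps}, together with \eqref{estD}, \eqref{estE} and Lemma~\ref{lem:Schwarz}, are what justify the term-by-term differentiation: they guarantee that the series in \eqref{defS}, \eqref{defPhi} and their formal first and second $x$-derivatives converge uniformly on $[0,\pi]$ for $\la$ in compact sets avoiding the $\{\la_{nk}\}$, so the differentiations are legitimate and the rearrangement of terms is valid. I expect the main obstacle to be the bookkeeping in the second-derivative computation: one must carefully track which terms come from differentiating the prefactor $S_{ljs}(x)$ (these are handled because $S_{ljs}$ satisfies the sought equation by the very relation we are trying to prove — so one should organize the computation as a fixed-point/consistency check rather than a direct verification), which come from the two $x$-dependences inside $\tilde D$, and which come from the Wronskian jump, and then show the net effect is precisely $\eps(x)S(x,\la)$. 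This is the step where the precise choice $H := \tilde H - T\eps_0(\pi)T$ in \eqref{defQH} is not yet used; it will enter only in the subsequent lemma verifying the boundary condition $V(\Phi)=0$, so for the present lemma the boundary term at $x=\pi$ need not be analyzed.
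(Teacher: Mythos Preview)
Your proposal is correct and follows precisely the approach the paper itself indicates: the paper's proof is just the one-line remark that \eqref{eqvS} ``can be proved by differentiating~\eqref{defS} and~\eqref{defPhi}, analogously to the scalar case (see \cite[Lemma~1.6.9]{FY01})'' and that \eqref{icS} ``trivially follow from~\eqref{defS} and~\eqref{defPhi}'', so you are simply spelling out the standard spectral-mappings computation that the paper delegates to the reference. Your observation that the terms coming from $S_{ljs}''$ must be handled as a fixed-point/consistency check (i.e.\ that the residuals $-S_{ljs}''+QS_{ljs}-\la_{ljs}S_{ljs}$ satisfy the homogeneous main equation and hence vanish by the invertibility of $\mathcal I+\tilde R(x)$) is exactly how \cite[Lemma~1.6.9]{FY01} proceeds in the scalar case.
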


The relations~\eqref{eqvS} can be proved by differentiating~\eqref{defS} and~\eqref{defPhi}, analogously to the scalar case (see \cite[Lemma~1.6.9]{FY01}). The relations~\eqref{icS} trivially follow from~\eqref{defS} and~\eqref{defPhi}.

\begin{lem} \label{lem:fL2}
The following series converges in $L_2((0, \pi); \mathbb C^{m \times m})$:
\begin{equation} \label{deff}
f(x) = \sum_{n = 1}^{\iy} \sum_{k = 1}^m V(S(x, \la_{nk})) \al'_{nk} \tilde S^{\dagger}(x, \la_{nk}).
\end{equation}
\end{lem}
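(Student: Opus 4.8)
The plan is to show the convergence of the series \eqref{deff} by reducing it to series that are already under control, namely the defining series \eqref{defeps} for $\eps_0(x)$ (whose convergence is guaranteed by Lemma~\ref{lem:eps}) together with the estimates from Lemma~\ref{lem:estpsi} and Proposition~\ref{prop:asymptal}. First I would compute $V(S(x,\la_{nk}))$ at $x$ near the endpoint $\pi$: since $S(x,\la)$ satisfies equation~\eqref{eqv} with potential $Q(x)=\tilde Q(x)+\eps(x)$ by Lemma~\ref{lem:relSPhi}, and $V(Y)=T(Y'(0)-HY(0))-T^\perp Y(0)$ uses only the values at $x=0$, I would instead use the representation~\eqref{defS} of $S(x,\la)$ and apply the functional $V$ to it, using that $V(\tilde S)$ is known from the model problem and that $V$ applied to $\tilde D(x,\cdot,\la)$ and its derivative produces a bounded, summable contribution thanks to the estimate~\eqref{estD}. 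This expresses each term $V(S(x,\la_{nk}))$ as a model term plus corrections controlled by $\xi_k$ and $\Xi$.

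The key steps, in order, are: (1) rewrite $V(S(x,\la_{nk}))\al'_{nk}\tilde S^\dagger(x,\la_{nk})$ by substituting~\eqref{defS}, splitting into the ``diagonal'' part coming from $\tilde S$ and the ``remainder'' part coming from the sum; (2) for the remainder part, group the terms by the collections $G_k$ exactly as in Section~2 and bound each group using Lemma~\ref{lem:Schwarz}, the estimates~\eqref{estf}/\eqref{estpsin1} on $\psi_n$, and the asymptotics~\eqref{asymptal} of the weight matrices, so that the group with index $k$ contributes $O(k\xi_k)$ in the relevant norm; (3) invoke $\{k\xi_k\}\in l_2$ from~\eqref{defXi} to conclude convergence of the remainder series in $L_2((0,\pi);\mathbb C^{m\times m})$; (4) for the model part, recognize that $\sum_{n,k} V(\tilde S(x,\la_{nk}))\al'_{nk}\tilde S^\dagger(x,\la_{nk})$ (after the same grouping) is, up to terms already handled, of the same type as the series defining $\eps_0'(x)$ or is directly summable by Parseval-type arguments in $B$, using that $\tilde\psi(x)\in B$ with $\|\tilde\psi_n(x)\|_{B(G_n)}\le C/n$. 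Combining (3) and (4) gives the claimed $L_2$-convergence.

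The main obstacle I anticipate is step (2): controlling $V(S(x,\la_{nk}))$ uniformly. The functional $V$ involves the derivative $S'(0,\la_{nk})$, and differentiating the series~\eqref{defS} in $x$ at $x=0$ must be justified — but this is covered by Lemma~\ref{lem:relSPhi} and the differentiability in Lemma~\ref{lem:estpsi}. The more delicate point is that $V(S(x,\la))$ is generically \emph{not} zero (it will vanish only once we prove $\la_{nk}$ are the true eigenvalues, which is the very next step of the argument, not available yet), so one cannot simplify the sum by annihilating a factor; instead one must keep the full bilinear expression and extract a genuine $l_2$-type bound from the weight-matrix asymptotics~\eqref{asymptal}, where the leading $\tfrac{2n^2}{\pi}$-growth of $\al'_{nk}$ is compensated by the $O(n^{-2})$-type decay coming from the product $\|V(S(x,\la_{nk}))\|\cdot\|\tilde S^\dagger(x,\la_{nk})\|$ together with the extra cancellation within each group $G_{ki}$ furnished by the second and fourth estimates of Lemma~\ref{lem:Schwarz}. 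Making this cancellation bookkeeping precise — matching the $\xi_k$ weights against the $G_{ki}$-subgroup structure exactly as in the proof of Proposition~\ref{prop:estR} — is the technical heart of the lemma; once it is in place, the $l_2$ summability and hence the $L_2$-convergence follow routinely.
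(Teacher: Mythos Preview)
Your approach differs substantially from the paper's, and in its present form it has a real gap at step~(4).

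The paper does \emph{not} substitute the series \eqref{defS} into $V(S)$. Instead it uses that $S(x,\la)$ already satisfies equation~\eqref{eqv} with an $L_2$-potential (Lemmas~\ref{lem:eps}, \ref{lem:relSPhi}), so the standard asymptotics \eqref{asymptS} hold for $S$ itself. From these one computes directly
\[
V(S(\cdot,\la_{nk}))=\frac{(-1)^n}{n}\bigl(T(z_kI-\Omega+H)+T^{\perp}+K_n\bigr),\qquad k\le p,
\]
and an analogous formula for $k>p$. After the grouping $Z_{n1}+Z_{n2}+Z_{n3}$, the terms $Z_{n1},Z_{n2}$ are handled by close-eigenvalue differences, and the decisive point is that the leading coefficient of $Z_{n3}$,
\[
\sum_{s\in\mathscr S} T(z_sI-\Omega+H)A^{(s)} \;=\; T\!\Bigl(\sum_s z_sA^{(s)}\Bigr)-T(\Omega-H)T,
\]
\emph{vanishes identically}, because the definition $H=\tilde H-T\eps_0(\pi)T$ forces $T(\Omega-H)T=T(\tilde\Omega-\tilde H)T=T\Theta T=\sum_s z_sA^{(s)}$. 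This algebraic cancellation is what turns an $O(1)$ term into an $l_2$-summable one; it is the heart of the lemma and it does not appear in your outline.

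Your step~(4) does not supply an equivalent mechanism. Each summand of the ``model part'' $\sum_{n,k} V(\tilde S(\cdot,\la_{nk}))\al'_{nk}\tilde S^{\dagger}(x,\la_{nk})$ has $L_2$-norm of order~$1$ (since $\|V(\tilde S)\|=O(n^{-1})$, $\|\al'_{nk}\|=O(n^{2})$, $\|\tilde S^{\dagger}\|_{L_2}=O(n^{-1})$), so neither a Parseval argument nor a naive comparison with $\eps_0'$ gives convergence: $\eps_0$ is built from \emph{differences} at $\la_{nk0}$ and $\la_{nk1}$, whereas your model series sums only over $\la_{nk0}$. One can salvage your route by inserting the zero terms $\tilde V(\tilde S(\cdot,\tilde\la_{nk}))\tilde\al'_{nk}=0$ (this is the eigencondition for $\tilde L$) to create an artificial difference structure, but you do not state this, and carrying it out requires exactly the same asymptotic bookkeeping as the paper's direct computation. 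Two further points: (i) the linear form $V$ is evaluated at $x=\pi$, not at $x=0$ (the displayed definition after \eqref{eqv} contains a typo); (ii) substituting \eqref{defS} into $V(S)$ produces the quantities $V(S_{lj0})$, $V(S_{lj1})$ on the right-hand side --- these are the very objects you are summing, so your ``remainder'' estimate in step~(2) is not a priori closed and would need either a uniform bound on $V(S_{ljs})$ from Lemma~\ref{lem:estpsi} or a fixed-point argument that you do not mention.
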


\begin{proof}
Note that
\begin{align*}
& \sum_{k = 1}^p V(S(x, \la_{nk})) \al'_{nk} \tilde S^{\dagger}(x, \la_{nk}) = Z_{n1}(x) + Z_{n2}(x) + Z_{n3}(x), \\
& Z_{n1}(x) := \sum_{s \in \mathscr S} \sum_{\substack{k = 1 \\ z_k = z_s}}^p (V(S(x, \la_{nk})) - V(S(x, \la_{ns})) \al'_{nk} \tilde S^{\dagger}(x, \la_{nk}) \\ 
& Z_{n2}(x) := \sum_{s \in \mathscr S} V(S(x, \la_{ns})) \sum_{\substack{k = 1 \\ z_k = z_s}}^p \al'_{nk} (\tilde S^{\dagger}(x, \la_{nk}) - \tilde S^{\dagger}(x, \la_{ns})) \\ 
& Z_{n3}(x) := \sum_{s \in \mathscr S} V(S(x, \la_{ns})) \al_n^{(s)} \tilde S^{\dagger}(x, \la_{ns}),
\end{align*}
where $\mathscr S := \{ s = \overline{1, p} \colon s = 1 \: \text{or} \: z_{s - 1} \ne z_s \}$.

Since the function $S(x, \la)$ satisfies~\eqref{eqvS} with $Q \in L_2((0, \pi); \mathbb C^{m \times m})$, the asymptotic relations~\eqref{asymptS} hold. Using \eqref{asymptS} for $S(x, \la)$ and $\tilde S(x, \la)$ together with~\eqref{asymptla}, we obtain for $n \ge 1$, $k = \overline{1, p}$ that
\begin{align} \label{asymptVS}
   & V(S(x, \la_{nk})) = \frac{(-1)^n}{n} (T (z_k I - \Omega + H) + T^{\perp} + K_n), \\ \label{asympttS}
    & \tilde S(x, \la_{nk}) = \frac{\sin \left(\bigl(n - \tfrac{1}{2}\bigr)  x \right) }{n - \tfrac{1}{2}} + \frac{1}{n^2} \cos \left(\bigl(n - \tfrac{1}{2} \bigr) x \right) \left( \frac{z_k x}{\pi} I - \frac{1}{2} \int_0^x \tilde Q(t) \, dt\right) + \frac{K_n}{n^2}.
\end{align}
Using~\eqref{asymptVS} and~\eqref{asympttS} and noting that $\al_{nk} = O(n^2)$, $n \ge 1$, one can easily show that the series $\sum\limits_{n = 1}^{\iy} Z_{ni}$ converges in $L_2((0, \pi); \mathbb C^{m \times m})$ for $i = 1, 2$. In view of \eqref{asymptVS}, \eqref{asympttS} and~\eqref{asymptal}, the following relation holds:
\begin{equation} \label{Zn3}
Z_{n3}(x) = \frac{2(-1)^n}{\pi} \sum_{s \in \mathscr S}  T (z_s I - \Omega + H) T A^{(s)} \sin \left( \bigl( n - \tfrac{1}{2} \bigr)x \right) + Z_{n4}(x), 
\end{equation}
where the series $\sum\limits_{n = 1}^{\iy} Z_{n4}$ converges in $L_2((0, \pi); \mathbb C^{m \times m})$. It follows from~\eqref{defQH} and construction of the model problem $\tilde L$, that 
$$
T( \Omega - H) T = T(\tilde \Omega - \tilde H) T = T \Theta T = T \sum_{s \in \mathscr S} z_s A^{(s)} T.
$$
Hence the main part in~\eqref{Zn3} vanishes, so $\sum\limits_{n = 1}^{\iy} Z_{n3}$ converges in $L_2((0, \pi); \mathbb C^{m \times m})$. The $L_2$-convergence of the series
$$
\sum_{n = 1}^{\iy} \sum_{k = p+1}^m V(S(x, \la_{nk})) \al'_{nk} \tilde S^{\dagger}(x, \la_{nk})
$$
can be proved analogously.
\end{proof}

The asymptotics~\eqref{asymptS} for $\tilde S(x, \la)$, the \textsc{Completeness} condition for $\mathcal S$ and $\tilde {\mathcal S}$ and the results of \cite{Bond19-sp} yield the following proposition.

\begin{prop} \label{prop:basis}
(i) The sequence $\{ \tilde S_{nk1}(x) \tilde {\mathcal E}_{nk} \}_{n \ge 1, \, k = \overline{1, m}}$ is complete in $L_2((0, \pi); \mathbb C^m)$.

(ii) The sequence $\{ \tilde S_{nk0}(x) \mathcal E_{nk} \}_{n \ge 1, \, k = \overline{1, m}}$ is minimal in $L_2((0, \pi); \mathbb C^m)$.
\end{prop}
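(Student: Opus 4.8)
The plan is to reduce both assertions to completeness and minimality of ``sine-type'' vector systems, which are then controlled by the \textsc{Completeness} hypotheses and the eigenfunction-expansion results of \cite{Bond19-sp}.

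For~(i), apply Definition~\ref{def:F} to the spectral data $\tilde{\mathcal S}$: for each maximal group of coincident eigenvalues $\tilde\la_{n_1 k_1} = \dots = \tilde\la_{n_r k_r}$ of $\tilde L$, the vectors $\{\tilde{\mathcal E}_{n_j k_j}\}_{j=1}^r$ form an orthonormal basis of $\Ran(\tilde\al_{n_1 k_1})$, and by \cite{Bond19-sp} this range is exactly the set of vectors $v$ for which $\tilde S(x,\tilde\la_{n_1 k_1})v$ is an eigenfunction of $\tilde L$. Hence $\{\tilde S_{nk1}(x)\tilde{\mathcal E}_{nk}\}$ runs over a family of eigenfunctions of the self-adjoint operator $\tilde L$ that spans all of its eigenspaces; since $\tilde L$ has discrete spectrum and compact resolvent, this family is complete in $L_2((0,\pi);\mathbb C^m)$ --- indeed, after normalization, it is an orthonormal basis, so $\{\tilde\rho_{nk}\tilde S_{nk1}(x)\tilde{\mathcal E}_{nk}\}$ is a Riesz basis. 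Equivalently, since $\tilde L$ is a genuine operator, Proposition~\ref{prop:nc} applies to $\tilde{\mathcal S}$, so $\tilde{\mathscr F}$ is complete; by \eqref{asymptS} and $\tilde\rho_{nk}\in\mathbb R$ one has $\|\tilde S_{nk1}(x)\tilde{\mathcal E}_{nk} - \tilde{\mathcal E}_{nk}\tilde\rho_{nk}^{-1}\sin(\tilde\rho_{nk}x)\|_{L_2}\le C\tilde\rho_{nk}^{-2}$, so the two systems are quadratically close and completeness transfers.

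For~(ii), let $\mathcal I + \tilde{\mathcal K}$ denote the transmutation (transformation) operator of $\tilde L$ --- the Volterra operator on $L_2((0,\pi);\mathbb C^m)$ sending $x\mapsto\rho^{-1}\sin(\rho x)I$ to $\tilde S(x,\la)$ --- which is bounded and boundedly invertible. Applied columnwise, it sends $x\mapsto\mathcal E_{nk}\rho_{nk}^{-1}\sin(\rho_{nk}x)$ to $\tilde S_{nk0}(x)\mathcal E_{nk}$, so $\{\tilde S_{nk0}(x)\mathcal E_{nk}\} = (\mathcal I + \tilde{\mathcal K})\,\mathscr F$. As $\mathcal I + \tilde{\mathcal K}$ is boundedly invertible, $\{\tilde S_{nk0}(x)\mathcal E_{nk}\}$ is minimal (and complete) iff $\mathscr F$ is; by the \textsc{Completeness} hypothesis $\mathscr F$ is complete, hence so is $\{\tilde S_{nk0}(x)\mathcal E_{nk}\}$. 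To upgrade completeness to minimality I would show, using \eqref{asymptla}, \eqref{asymptS} and \eqref{asymptal} (where $\{\|K_n\|\}\in l_2$), that $\{\tilde\rho_{nk}\tilde S_{nk0}(x)\mathcal E_{nk}\}$ is the image, under $\mathcal A(\mathcal I + \mathcal K)$, of the Riesz basis $\{\tilde\rho_{nk}\tilde S_{nk1}(x)\tilde{\mathcal E}_{nk}\}$ from part~(i); here $\mathcal A$ is a bounded invertible operator encoding the block-unitary change of frame $\tilde{\mathcal E}_{nk}\mapsto\mathcal E_{nk}$ within each asymptotic group, and $\mathcal K$ is Hilbert--Schmidt, collecting the $l_2$-small changes of frequency $\tilde\rho_{nk}\mapsto\rho_{nk}$ and of the subspaces $\Ran(\tilde\al_n^{(s)})\mapsto\Ran(\al_n^{(s)})$. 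Since a complete system of the form $\mathcal A(\mathcal I + \mathcal K)(\text{Riesz basis})$ with $\mathcal A$ bounded invertible and $\mathcal K$ compact is automatically minimal (Fredholm alternative), the claim follows.

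The main obstacle is this last step: isolating the bounded invertible block-unitary factor $\mathcal A$ and checking that the remainder is Hilbert--Schmidt. The delicacy is that, within a group of asymptotically multiple eigenvalues, $\{\mathcal E_{nk}\}$ and $\{\tilde{\mathcal E}_{nk}\}$ are orthonormal bases of two distinct (though $l_2$-close) subspaces of $\mathbb C^m$ and may be arbitrarily rotated relative to one another, so the error has to be controlled through the group-sum quantities $\al_n^{(s)}$, $\tilde\al_n^{(s)}$ rather than the individual matrices $\al'_{nk}$, $\tilde\al'_{nk}$ --- precisely the eigenvalue grouping that underlies the Banach space $B$ of Section~2 --- with the requisite estimates supplied by \cite{Bond19-sp}.
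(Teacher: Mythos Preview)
The paper does not prove this proposition; it merely records that it follows from the asymptotics~\eqref{asymptS} for $\tilde S$, the \textsc{Completeness} hypotheses for $\mathcal S$ and $\tilde{\mathcal S}$, and the results of~\cite{Bond19-sp}. You are therefore reconstructing an argument the paper imports wholesale from that reference.

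Your treatment of~(i) is correct and is exactly the intended route: $\tilde L$ is a genuine self-adjoint operator with compact resolvent, the vectors $\tilde S_{nk1}(x)\tilde{\mathcal E}_{nk}$ are eigenfunctions spanning each eigenspace, and hence the system is complete (indeed, an orthogonal basis after normalization).

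For~(ii), the reduction via the transformation operator $\mathcal I + \tilde{\mathcal K}$ to minimality of $\mathscr F$ is clean and correct, and your Fredholm strategy---realize the target system as $\mathcal A(\mathcal I+\mathcal K)$ applied to the Riesz basis from~(i), with $\mathcal A$ bounded invertible and $\mathcal K$ compact, then use completeness plus index zero to force invertibility---is sound. The point that block-unitary mixing within groups of uniformly bounded size preserves the Riesz-basis property (so $\mathcal A$ really is bounded with bounded inverse) is the key structural fact. The obstacle you flag is genuine: the exact-multiplicity groups for $\{\la_{nk}\}$ and $\{\tilde\la_{nk}\}$ need not coincide, and the individual ranges $\Ran(\al'_{nk})$ need not stabilize---only the group sums $\al_n^{(s)}$ do, via~\eqref{asymptal}. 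The resolution is precisely what you outline: work at the level of the asymptotic subcollections $G_{ki}$, compare both $\Ran(\al(G_{ki}))$ and $\Ran(\tilde\al(G_{ki}))$ to the fixed projector $A^{(s)}$, and absorb the $l_2$-small discrepancy into $\mathcal K$ while the residual within-group rotation goes into $\mathcal A$. These estimates are exactly the content of~\cite{Bond19-sp}, which is why the paper cites it rather than reproducing them. Your sketch is thus consistent with the paper's (outsourced) proof; what remains is bookkeeping, not a missing idea.
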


\begin{lem} \label{lem:VPhi}
$V(\Phi) = 0$.
\end{lem}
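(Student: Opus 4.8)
The plan is to show that $V(\Phi) = 0$ by computing $V(\Phi)$ from the defining formula \eqref{defPhi}, reducing it to the series $f(x)$ from Lemma~\ref{lem:fL2}, and then proving $f \equiv 0$ via a completeness/minimality argument based on Proposition~\ref{prop:basis}. First I would apply the boundary form $V(\cdot)$ to both sides of \eqref{defPhi}. Since $\tilde\Phi$ is the Weyl solution of the model problem $\tilde L$, which has the same projector $\tilde T = T$ (and $V$ depends only on $T$, $T^\perp$, and $H$, with $H$ entering only through $TY'(0)$ at $x=0$), the contribution of $\tilde\Phi$ must be handled carefully: $\tilde\Phi$ satisfies $\tilde V(\tilde\Phi) = 0$ where $\tilde V$ uses $\tilde H$, not $H$. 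So $V(\tilde\Phi) = \tilde V(\tilde\Phi) + T(\tilde H - H)\tilde\Phi(0,\la) = T(\tilde H - H)$, using $\tilde\Phi(0,\la) = I$ and the construction $H = \tilde H - T\eps_0(\pi)T$ from \eqref{defQH}, so $V(\tilde\Phi) = T \eps_0(\pi) T$. For the series term, $V$ acts on the $x$-variable of $S_{lj0}(x)$ and $S_{lj1}(x)$ through $\tilde D$ and $\tilde E$; here I would use that $\tilde E(x,\la,\mu)$ is built from the Wronskian $\langle \tilde S^\dagger(x,\bar\la),\tilde\Phi(x,\mu)\rangle$, and that $V(\tilde D(\cdot,\la_{ljs},\la)) $ and $V(\tilde E(\cdot,\la_{ljs},\la))$ can be evaluated at $x=0$ (where the Wronskians simplify because $\tilde S(0,\cdot)=0$, $\tilde S'(0,\cdot)=I$). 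The key identity I expect is that, after applying $V$, the contributions from the $\tilde D$-terms and the $\tilde\Phi$-offset telescope against an expression involving exactly $f(x)$ and $\eps_0(\pi)$.

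The cleanest route is probably this: define $g(x) := V(\Phi(x,\cdot))$ — note $V$ is applied in the $x$-slot but the result is still a $\la$-dependent matrix — and show that $g(\la)$ is entire in $\la$ with the same decay as $\Phi$ minus its singular part, hence $g(\la) = V(\tilde\Phi) - \sum_{l,j}(V(S(\cdot,\la_{lj0})\,\text{-terms}))$; then use \eqref{relSlj0} and the analogue $\Phi(x,\la_{lj0})=\ldots$ together with the residue/contour-integration machinery (as in the proof of Lemma~\ref{lem:main}) to identify $g$. Actually, the most direct approach mirrors \cite[Lemma~1.6.9 and the surrounding argument]{FY01}: one shows $V(\Phi(x,\la))$ is independent of $\la$ (it is entire and bounded, being $V$ applied to a solution of \eqref{eqv} evaluated near $x=0$, and the singular parts cancel), so $V(\Phi) = V(\Phi(\cdot,\la))$ for, say, large negative $\la$; then one computes the limit and matches it to $T\eps_0(\pi)T - T\eps_0(\pi)T = 0$ using the definition \eqref{defeps} of $\eps_0$ and the relation \eqref{relSlj0}. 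Concretely, $V$ applied to the series in \eqref{defPhi} at the relevant point produces $\sum_{l,j}\bigl(V(S(\cdot,\la_{lj0}))\al'_{lj0}\tilde S^\dagger_{lj0}(\pi)T - \cdots\bigr)$-type terms whose $L_2$-convergence is exactly Lemma~\ref{lem:fL2}, and whose sum, evaluated correctly, equals $V(\tilde\Phi) = T\eps_0(\pi)T$, giving $V(\Phi) = T\eps_0(\pi)T - T\eps_0(\pi)T = 0$.

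Alternatively — and this may be the argument actually used — one proves $V(\Phi)=0$ by showing that the columns of $\Phi$, multiplied by suitable vectors, are orthogonal to a complete system. Specifically, from \eqref{eqvS} both $S(x,\la)$ and $\Phi(x,\la)$ solve the equation with potential $Q$; consider the Wronskian-type quantity and integrate by parts against $\tilde S_{nk1}(x)\tilde{\mathcal E}_{nk}$. Using Lemma~\ref{lem:fL2} to guarantee $L_2$-convergence, and using that $\la_{nk}$ are the prescribed points together with \eqref{relSlj0}, one derives that $\langle V(\Phi)\cdot v, \tilde S_{nk1}(x)\tilde{\mathcal E}_{nk}\rangle = 0$ for all $n,k$ and all $v\in\mathbb C^m$; since $\{\tilde S_{nk1}(x)\tilde{\mathcal E}_{nk}\}$ is complete in $L_2((0,\pi);\mathbb C^m)$ by Proposition~\ref{prop:basis}(i), this forces the relevant projection of $V(\Phi)$ to vanish, and a rank/dimension count (using that $V$ takes values in $\Ran(T)\oplus\Ran(T^\perp) = \mathbb C^m$ with the projector structure) finishes it. The main obstacle I anticipate is the bookkeeping of the boundary form: $V$ mixes the value and derivative at $x=0$ via $T$, $T^\perp$, and $H$, while the model problem carries $\tilde H\neq H$, so one must track the discrepancy $T(\tilde H - H)T = T\eps_0(\pi)T$ precisely and verify that the contour-integral/residue cancellation producing $f\equiv 0$ exactly absorbs it — this is where the construction \eqref{defQH} of $H$ is used in an essential way, and getting the signs and the order of matrix multiplication right (operators act on the right, per the Section~2 convention) will require care.
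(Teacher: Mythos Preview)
Your proposal has a basic misreading and, more importantly, misses the central mechanism of the argument.

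First, the boundary form $V$ is evaluated at $x=\pi$, not at $x=0$: $V(Y)=T\bigl(Y'(\pi)-HY(\pi)\bigr)-T^{\perp}Y(\pi)$. Your computation ``$V(\tilde\Phi)=T(\tilde H-H)\tilde\Phi(0,\la)=T(\tilde H-H)$'' is therefore off; the correct discrepancy is $V(\tilde\Phi)-\tilde V(\tilde\Phi)=T(\tilde H-H)\tilde\Phi(\pi,\la)$, and when you apply $V$ to the series in \eqref{defPhi} you also pick up a term $-T\eps_0(\pi)\tilde\Phi(\pi,\la)$ from the product rule. These combine (using $H-\tilde H=-T\eps_0(\pi)T$) to $-T\eps_0(\pi)T^{\perp}\tilde\Phi(\pi,\la)$, which vanishes because $T^{\perp}\tilde\Phi(\pi,\la)=0$. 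None of your three sketches gets this bookkeeping right, and the ``$V(\Phi)$ is entire and bounded in $\la$'' idea does not match the actual structure: $V(\Phi)(\la)$ is a function of $\la$ alone, not an $L_2$ function of $x$, so it cannot be tested against $\tilde S_{nk1}(x)\tilde{\mathcal E}_{nk}$ directly.

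The genuine gap is that you never isolate the key intermediate fact $V(S_{lj0})\al_{lj0}=0$ (equation \eqref{VSal} in the paper), and you never invoke the \emph{minimality} half of Proposition~\ref{prop:basis}. The paper's argument runs through $V(S)$, not $V(\Phi)$: one applies $V$ to \eqref{defS}, evaluates at $\la=\la_{nk1}$, right-multiplies by $\al_{nk1}$, and uses the spectral identities $\tilde V(\tilde S(\cdot,\la_{nk1}))\al_{nk1}=0$, $T^{\perp}\tilde S(\pi,\la_{nk1})\al_{nk1}=0$ together with the biorthogonality $\al'_{lj1}\tilde D(\pi,\la_{lj1},\la_{nk1})\al_{nk1}=\delta_{\la_{lj1},\la_{nk1}}\al_{nk1}$ to cancel the $s=1$ part of the series. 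What remains is exactly $\int_0^{\pi}f(t)\tilde S_{nk1}(t)\al_{nk1}\,dt=0$ with $f$ from Lemma~\ref{lem:fL2}. Completeness of $\{\tilde S_{nk1}\tilde{\mathcal E}_{nk}\}$ forces $f\equiv 0$; then minimality of $\{\tilde S_{nk0}\mathcal E_{nk}\}$ applied to the defining series \eqref{deff} forces each coefficient $V(S_{lj0})\al'_{lj0}$ to vanish. Only after this does one apply $V$ to \eqref{defPhi}: the $s=0$ terms die by \eqref{VSal}, the $s=1$ terms die because $\al'_{lj1}\tilde E(\pi,\la_{lj1},\la)=0$, and the remaining boundary terms cancel as noted above. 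Without the $V(S_{lj0})\al_{lj0}=0$ step and the minimality argument, there is no way to kill the $s=0$ part of the series in $V(\Phi)$.
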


\begin{proof}
Differentiating~\eqref{defS} and using~\eqref{defeps}, we derive
$$
V(S) = V(\tilde S) - \sum_{l = 1}^{\iy} \sum_{j = 1}^m (V(S_{lj0}) \al'_{lj0} \tilde D(\pi, \la_{lj0}, \la) - V(S_{lj1}) \al'_{lj1} \tilde D(\pi, \la_{lj1}, \la)) - T \eps_0(\pi) \tilde S(\pi, \la). 
$$
Since 
$$
\tilde V(\tilde S) = T (\tilde S'(\pi, \la) - \tilde H \tilde S(\pi, \la)) - T^{\perp} \tilde S(\pi, \la) = V(\tilde S) + T(H - \tilde H) \tilde S(\pi, \la),
$$
we get
\begin{equation} \label{VS1}
V(S) = \tilde V(\tilde S) - T \eps_0(\pi) T^{\perp} \tilde S(\pi, \la) - \sum_{l = 1}^{\iy} \sum_{j = 1}^m (V(S_{lj0}) \al'_{lj0} \tilde D(\pi, \la_{lj0}, \la) - V(S_{lj1}) \al'_{lj1} \tilde D(\pi, \la_{lj1}, \la)).
\end{equation}

Recall that $\{ \la_{nk1} \}_{n \ge 1, \, k = \overline{1, m}}$ and $\{ \al_{nk1} \}_{n \ge 1, \, k = \overline{1, m}}$ are the eigenvalues and the weight matrices of the problem $\tilde L$, respectively. Therefore 
\begin{equation} \label{VS2}
\tilde V(\tilde S(x, \la_{nk1})) \al_{nk1} = 0, \quad T^{\perp} \tilde S(\pi, \la_{nk1}) \al_{nk1} = 0, \quad n \ge 1, \quad k = \overline{1, m}
\end{equation}
(see \cite[Lemma~2.2]{Bond19-sp}). The relations~\eqref{VS1} and~\eqref{VS2} together imply
\begin{equation} \label{VS3}
V(S_{nk1}) \al_{nk1} = -\sum_{l = 1}^{\iy} \sum_{j = 1}^{m} (V(S_{lj0}) \al'_{lj0} \tilde D(\pi, \la_{lj0}, \la_{nk1}) \al_{nk1} - V(S_{lj1}) \al'_{lj1} \tilde D(\pi, \la_{lj1}, \la_{nk1}) \al_{nk1}).
\end{equation}

Using~\eqref{defD} and~\eqref{eqv} for the problem $\tilde L$, one can easily show that
\begin{equation} \label{intD}
\tilde D(x, \la, \mu) = \int_0^x \tilde S^{\dagger}(t, \bar \la) \tilde S(t, \mu) \, dt.
\end{equation}
It has been proved in \cite{Bond19-sp}, that
\begin{equation}
    \al_{lj1} \int_0^{\pi} \tilde S^{\dagger}(t, \la_{lj1}) \tilde S(t, \la_{nk1}) \, dt \, \al_{nk1} = \begin{cases}
                                             \al_{nk1}, \quad \la_{nk1} = \la_{lj1}, \\
                                             0, \quad \la_{nk1} \ne \la_{lj1}.
                                        \end{cases}    
\end{equation}
Combining the latter relations, we obtain that
$$
\sum_{l = 1}^{\iy} \sum_{j = 1}^m V(S_{lj1}) \al'_{lj1} \tilde D(\pi, \la_{lj1}, \la_{nk1}) \al_{nk1} = V(S_{nk1}) \al_{nk1}.
$$
Hence \eqref{VS3} takes the form
\begin{equation} \label{VS4}
\sum_{l = 1}^{\iy} \sum_{j = 1}^m V(S_{lj0}) \al'_{lj0} \tilde D(\pi, \la_{lj0}, \la_{nk1}) \al_{nk1} = 0.
\end{equation}
Substituting~\eqref{intD} into~\eqref{VS4}, we arrive at the relations
\begin{equation} \label{intfS}
    \int_0^{\pi} f(t) \tilde S_{nk1}(t) \al_{nk1} \, dt = 0, \quad n \ge 1, \quad k = \overline{1, m},
\end{equation}
where $f(t)$ is defined by~\eqref{deff}. By virtue of Lemma~\ref{lem:fL2}, $f \in L_2((0, \pi); \mathbb C^{m \times m})$. Then we apply Proposition~\ref{prop:basis}. The completeness of the sequence $\{ \tilde S_{nk1}(t) \tilde{\mathcal E}_{nk} \}_{n \ge 1, \, k = \overline{1, m}}$ together with~\eqref{intfS} implies $f = 0$ in $L_2((0, \pi); \mathbb C^{m \times m})$. The relation~\eqref{deff} and the minimality of the sequence $\{ \tilde S_{nk0}(x) \mathcal E_{nk} \}_{n \ge 1, \, k = \overline{1, m}}$ yield 
\begin{equation} \label{VSal}
    V(S_{lj0}) \al_{lj0} = 0, \quad l \ge 1, \quad j = \overline{1, m}.
\end{equation}

Differentiating~\eqref{defPhi}, we derive the following relation similar to~\eqref{VS1}:
$$
V(\Phi) = \tilde V(\tilde \Phi) - T \eps_0(\pi) T^{\perp} \tilde \Phi(\pi, \la) - \sum_{l = 1}^{\iy} \sum_{j = 1}^m (V(S_{lj0}) \al'_{lj0} \tilde E(\pi, \la_{lj0}, \la) - V(S_{lj1}) \al'_{lj1} \tilde E(\pi, \la_{lj1}, \la)).
$$
Recall that $\tilde V(\tilde \Phi) = 0$, $T^{\perp} \tilde \Phi(\pi, \la) = 0$. Using~\eqref{defE} and~\eqref{VS2}, one can easily show that $\al'_{lj1} \tilde E(\pi, \la_{lj1}, \la) = 0$ for $\la \ne \la_{lj1}$, $l \ge 1$, $j = \overline{1, m}$. Consequently, taking~\eqref{VSal} into account, we obtain that $V(\Phi) = 0$.
\end{proof}

Lemmas~\ref{lem:relSPhi} and~\ref{lem:VPhi} imply that $\Phi(x, \la)$ defined by~\eqref{defPhi} is the Weyl solution of the problem $L$. Hence $M(\la) := \Phi'(0, \la)$ is the Weyl matrix of $L$.
Using~\eqref{defPhi}, we derive the relation
$$
M(\la) = \tilde M(\la) - \sum_{l = 1}^{\iy} \sum_{j = 1}^m \left( \frac{\al'_{lj0}}{\la - \la_{lj0}} - \frac{\al'_{lj1}}{\la - \la_{lj1}}\right).
$$
Obviously, the singularities of $M(\la)$ coincide with $\{ \la_{nk} \}_{n \ge 1, \, k = \overline{1, m}}$ and the relation~\eqref{defal} holds, so $\{ \la_{nk}, \al_{nk} \}_{n \ge 1, \, k = \overline{1, m}}$ are the spectral data of $L$.

In order to finish the proof of Theorem~\ref{thm:sc}, it remains to show that the matrices $Q(x)$ and $H$ constructed by~\eqref{defQH} are Hermitian. For this purpose, along with $L$ we consider the boundary value problem $L^* = L^*(Q(x), T, H)$ of the following form:
\begin{gather} \label{eqvZ}
-Z''(x) + Z(x) Q(x) = \la Z(x), \quad x \in (0, \pi), \\ \label{bcZ}
Z(0) = 0, \quad V^*(Z) := (Z'(\pi) - Z(\pi) H) T - Z(\pi) T^{\perp} = 0.
\end{gather}

The Weyl solution of the problem $L^*$ is the matrix solution $\Phi^*(x, \la)$ of eq.~\eqref{eqvZ}, satisfying the conditions $\Phi^*(0, \la) = I$, $V^*(\Phi^*) = 0$. The Weyl matrix is defined as $M^*(\la) := {\Phi^*}'(0, \la)$. Using the approach of \cite{Yur06}, one can show that $M(\la) \equiv M^*(\la)$, so the eigenvalues $\{ \la_{nk}^* \}_{n \ge 1, \, k = \overline{1, m} }$ of the problem $L^*$ coincide with the eigenvalues of $L$, and the weight matrices $\al_{nk}^* := \Res\limits_{\la = \la_{nk}^*} M^*(\la)$ coincide with $\al_{nk}$, $n \ge 1$, $k = \overline{1, m}$.

Taking the conjugate transpose of~\eqref{eqvZ} and~\eqref{bcZ}, we conclude that the boundary value problem $L^{\dagger} := L(Q^{\dagger}(x), T, H^{\dagger})$ has the spectral data $\{ \bar \la_{nk}^*, (\al^*_{nk})^{\dagger} \}_{n \ge 1, \, k = \overline{1, m}} = \{ \bar \la_{nk}, \al_{nk}^{\dagger} \}_{n \ge 1, \, k = \overline{1, m}}$. Since $\la_{nk} \in \mathbb R$ and $\al_{nk} = \al_{nk}^{\dagger}$, $n \ge 1$, $k = \overline{1, m}$, the spectral data of the problems $L$ and $L^{\dagger}$ coincide. Uniqueness of Inverse Problem~\ref{ip:matr} solution (see \cite{Xu19, Bond19-alg}) yields that $L = L^{\dagger}$, i.e. $Q(x) = Q^{\dagger}(x)$ for a.a. $x \in (0, \pi)$ and $H = H^{\dagger}$. Theorem~\ref{thm:sc} is completely proved.

\begin{lem}
In the graph case, the matrix function $Q(x)$ constructed by~\eqref{defQH} is diagonal.
\end{lem}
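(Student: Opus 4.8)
The plan is to prove that the sine‑type solution $S(x,\la)$ built in~\eqref{defS} takes only diagonal values, and then to read off diagonality of $Q$ directly from the matrix equation~\eqref{eqvS}.

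First I would isolate the role of the \textsc{Diagonality} hypothesis. By assumption the solution $\psi(x) = \{\psi_n(x)\}_{n\ge1}$ of the main equation~\eqref{main} is diagonal, so each matrix $S_{lj0}(x) = \psi_k(x)(\rho_{lj0})$ is diagonal; combined with~\eqref{relSlj0} this gives that $S(x,\la_{lj0}) = S_{lj0}(x)$ is a diagonal matrix for all $l \ge 1$, $j = \overline{1,m}$ and all $x \in [0,\pi]$. Writing $S(x,\la) = [S_{rs}(x,\la)]_{r,s=1}^m$, this means $S_{rs}(x,\la_{lj0}) = 0$ for every $r \ne s$.

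Next I would use that, by Lemmas~\ref{lem:relSPhi} and~\ref{lem:eps}, $S$ solves~\eqref{eqvS} with $Q \in L_2((0,\pi); \mathbb C^{m\times m})$, so $S(x,\cdot)$ is entire and satisfies the standard asymptotics~\eqref{asymptS}. Fix $x \in (0,\pi]$ and indices $r \ne s$. Then $F(\rho) := \rho\, S_{rs}(x,\rho^2)$ is entire and odd in $\rho$, and~\eqref{asymptS} yields $|F(\rho)| \le C(1+|\rho|)^{-1}\exp(\pi|\mathrm{Im}\,\rho|)$, so $F \in L_2(\mathbb R)$; by the Paley--Wiener theorem, $F(\rho) = \int_0^{\pi} g(t)\sin(\rho t)\,dt$ for some $g \in L_2(0,\pi)$. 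The vanishing relations of the previous step give $\int_0^{\pi} g(t)\sin(\rho_{l10}t)\,dt = F(\rho_{l10}) = 0$ for all $l \ge 1$ (I use only the subsequence $j = 1$). By~\eqref{asymptla}, $\rho_{l10} = l - \tfrac12 + O(l^{-1})$ with an $l_2$-correction, hence $\{\sin(\rho_{l10}t)\}_{l\ge1}$ is an $l_2$-close perturbation of the orthonormal basis $\{\sin((l-\tfrac12)t)\}_{l\ge1}$ of $L_2(0,\pi)$ and is therefore complete there. Thus $g \equiv 0$, i.e. $S_{rs}(x,\la) \equiv 0$. Since $x$ and $r \ne s$ were arbitrary, $S(x,\la)$ is diagonal for all $x \in [0,\pi]$ and all $\la \in \mathbb C$ (the case $x = 0$ being trivial).

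Finally I would substitute back into~\eqref{eqvS}. Since $S$ is diagonal, for $r \ne s$ the $(r,s)$-entry of $-S''(x,\la) + Q(x)S(x,\la) = \la S(x,\la)$ collapses to $Q_{rs}(x)\, S_{ss}(x,\la) = 0$ for a.a. $x \in (0,\pi)$ and all $\la$. For $x > 0$ the scalar function $S_{ss}(x,\cdot)$ is not identically zero, again by~\eqref{asymptS}, so for a.a. $x$ one may choose $\la$ with $S_{ss}(x,\la) \ne 0$ and conclude $Q_{rs}(x) = 0$. Hence $Q(x)$ is diagonal. The only step requiring some care is the completeness of the perturbed sine system $\{\sin(\rho_{l10}t)\}_{l\ge1}$ in $L_2(0,\pi)$, but this is a standard consequence of the eigenvalue asymptotics~\eqref{asymptla} together with classical stability results for exponential (sine) systems of Bari/Kadec type; everything else is direct computation with the already-established properties of $S$.
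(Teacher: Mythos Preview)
Your argument is correct and follows essentially the same route as the paper's proof: the paper invokes ``the relation~\eqref{relSlj0}, the asymptotic formulas~\eqref{asymptla} and~\eqref{asymptS} together with interpolation arguments'' to conclude that $S(x,\la)$ is diagonal, and then reads off diagonality of $Q$ from~\eqref{eqvS}; your Paley--Wiener representation plus completeness of the perturbed sine system is exactly one way to make those interpolation arguments precise. One very small remark: you restrict to the zeros $\{\rho_{l,1,0}\}_{l\ge 1}$, which is enough, but using all $\{\rho_{lj0}\}_{l\ge1,\,j=\overline{1,m}}$ (which you have for free) gives a zero set of strictly larger density than an entire function of exponential type $\le\pi$ in $L_2(\mathbb R)$ can support, and so avoids any care about possible coincidences among the $\rho_{l,1,0}$ for small $l$.
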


\begin{proof}
Recall that, in the graph case, we have an additional requirement: the solution $\psi(x)$ of the main equation~\eqref{main} is diagonal. Consequently, the matrices $S_{ljs}(x)$ are diagonal for all $l \ge 1$, $j = \overline{1, m}$, $s = 0, 1$. The relation~\eqref{relSlj0}, the asymptotic formulas~\eqref{asymptla} and~\eqref{asymptS} together with interpolation arguments imply that the matrix function $S(x, \la)$ is also diagonal. In view of~\eqref{eqvS}, we conclude that $Q(x)$ is diagonal.
\end{proof}

Since in the graph case $\rank(T) = 1$, the matrix $H$ defined by~\eqref{defQH} has the form $H = h T$, $h \in \mathbb R$. According to~\eqref{defz1} and~\eqref{defP2}, the number $h$ is uniquely specified by the numbers $\{ z_j \}_{j = 1}^m$:
$$
h = \frac{1}{m} \sum_{j = 2}^m z_j - z_1.
$$
Since $z_j = \tilde z_j$, $j = \overline{1, m}$, we have $h = \tilde h$
($\tilde h$ is defined by~\eqref{modelg}). Thus, Theorem~\ref{thm:scg} is proved.

\section{Local Solvability and Stability}

This section concerns local solvability and stability of the studied inverse problems.  
The main results are formulated in Theorems~\ref{thm:loc} and~\ref{thm:locg} for the general matrix case and for the graph case, respectively. The proofs of these theorems strongly rely on the results of the previous sections.

Let $\tilde L = L(\tilde Q(x), T, \tilde H)$ be a fixed boundary value problem in the general form, and let $\tilde {\mathcal S} = \{ \tilde \la_{nk}, \tilde \al_{nk} \}_{n \ge 1, \, k = \overline{1, m}}$ be the spectral data of $\tilde L$. Denote by $SD(\tilde {\mathcal S})$ the set of all collections $\mathcal S = \{ \la_{nk}, \al_{nk} \}_{n \ge 1, \, k = \overline{1, m}} \in SD$, such that the relations~\eqref{asymptla}, \eqref{asymptal} and~\eqref{eqcoef} hold.

Let us group the numbers $\{ \tilde \rho_{nk} \}_{n \ge 1, \, k = \overline{1, m}}$ into the collections (multisets) as follows
$$
\tilde G_0 := \{ \tilde \rho_{nk} \}_{n = \overline{1, n_0}, \, k = \overline{1, m}}, \quad \tilde G_{2j} := \{ \tilde \rho_{n_0 + j, k} \}_{k = 1}^p, \quad \tilde G_{2j + 1} := \{ \tilde \rho_{n_0 + j, k} \}_{k = p + 1}^m, \quad j \ge 1,
$$
where $n_0$ is a fixed integer such that $\tilde G_n \cap \tilde G_k = \varnothing$ for all $n \ne k$. 

Let $\mathcal S$ be an arbitrary element from $SD(\tilde {\mathcal S})$. Consider the partition $\{ G_k \}_{k \ge 1}$ of the numbers $\{ \rho_{nks} \}_{n \ge 1, \, k = \overline{1, m}, \, s = 0, 1}$, defined by~\eqref{defG} and having the same $n_0$ as the partition $\{ \tilde G_k \}_{k \ge 1}$. Clearly, $\tilde G_k \subset G_k$, $k \ge 1$. 
Consider an arbitrary partition $G_k = \bigcup\limits_{i = 1}^{p_k} G_{ki}$, $p_k \in \mathbb N$, $k \ge 1$ (which not necessarily coincide with the one defined in Section~4), satisfying the conditions: $G_{ki} \cap G_{kj} = \varnothing$ for all $i = \overline{1, p_k}$, $k \ge 1$, and $\rho_{lj0} \in G_{ki}$ iff $\rho_{lj1} \in G_{ki}$. 
For any such partition, we define the numbers $\{ \xi_k \}_{k \ge 1}$ and $\Xi$ by the formulas~\eqref{defxi} and~\eqref{defXi}, respectively. For a collection $\mathcal S$, we fix such partition $\{ G_{ki} \}_{k \ge 1, \, i = \overline{1, p_k}}$ that the value of $\Xi$ is minimal possible.

The following theorem gives local solvability and stability of Inverse Problem~\ref{ip:matr}.

\begin{thm} \label{thm:loc}
Let $\tilde L = L(\tilde Q(x), T, \tilde H)$ is a fixed boundary value problem in the general form, and let $\tilde {\mathcal S}$ be the spectral data of $\tilde L$. Then there exists $\de > 0$ (depending only on $\tilde L$) such that for any data $\mathcal S \in SD(\tilde {\mathcal S})$ such that $\Xi \le \de$ there exist a unique matrix function $Q(x) = Q^{\dagger}(x) \in L_2((0, \pi); \mathbb C^{m \times m})$ and a unique matrix $H = H^{\dagger}$ such that the problem $L = L(Q(x), T, H)$ has the spectral data $\mathcal S$ and
\begin{equation} \label{estQH}
    \| Q - \tilde Q \|_{L_2} \le C \Xi, \quad \| H - \tilde H \| \le C \Xi. 
\end{equation}
The constant $C$ in the estimates~\eqref{estQH} depends only on $\tilde L$, $\de$ and not on a particular choice of $\mathcal S$.
\end{thm}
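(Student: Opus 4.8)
The plan is to reduce the statement to the machinery of Sections~4--6, replacing the Fredholm/\textsc{Completeness} argument of Lemma~\ref{lem:main} by the smallness of $\Xi$, and to keep every constant explicit so that the bounds~\eqref{estQH} come out. Since $\tilde L = L(\tilde Q(x),T,\tilde H)$ is fixed and $\mathcal S\in SD(\tilde{\mathcal S})$, the relations~\eqref{eqcoef} hold, so the construction of Section~2 (the space $B$, the collections $\{G_n\}$, the operator $\tilde R(x)$ and the main equation~\eqref{main}) applies with $\tilde L$ as the model problem, and the estimates of Section~4 are valid for the pair $L$, $\tilde L$. By Proposition~\ref{prop:estR}, $\|\tilde R(x)\|_{B\to B}\le C\Xi$ uniformly in $x\in[0,\pi]$, with $C$ depending only on $\tilde L$. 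I would fix $\de>0$ with $C\de<1$; then for $\Xi\le\de$ the operator $\mathcal I+\tilde R(x)$ is boundedly invertible for every $x$, with $\|(\mathcal I+\tilde R(x))^{-1}\|_{B\to B}\le (1-C\de)^{-1}$, so the main equation~\eqref{main} has a unique solution $\psi(x)\in B$. Lemma~\ref{lem:estpsi} then applies verbatim (its proof uses only Proposition~\ref{prop:estR} and the solvability just obtained, not \textsc{Completeness}), giving $\|\psi_n(x)-\tilde\psi_n(x)\|_{B(G_n)}\le C\Xi\eta_n/n$ and $\|\psi_n'(x)-\tilde\psi_n'(x)\|_{B(G_n)}\le C\Xi/n$.

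Next I would define $S(x,\la)$, $\Phi(x,\la)$, $\eps_0(x)$, $\eps(x)$ by~\eqref{defS}--\eqref{defeps} and set $Q:=\tilde Q+\eps$, $H:=\tilde H-T\eps_0(\pi)T$ as in~\eqref{defQH}. Lemma~\ref{lem:eps} gives $\eps\in L_2((0,\pi);\mathbb C^{m\times m})$ with $\|\eps\|_{L_2}\le C\Xi$, hence $\|Q-\tilde Q\|_{L_2}\le C\Xi$; since $\tilde S(0,\la)\equiv 0$ one has $\eps_0(0)=0$, and $\eps_0'=-\tfrac12\eps$ then yields $\|\eps_0(\pi)\|\le C\|\eps\|_{L_2}\le C\Xi$, so $\|H-\tilde H\|\le C\Xi$, which is~\eqref{estQH}. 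That $\mathcal S$ is precisely the spectral data of $L=L(Q(x),T,H)$ follows, as in Section~6, from Lemmas~\ref{lem:relSPhi}, \ref{lem:fL2} and~\ref{lem:VPhi} (identifying $\Phi$ with the Weyl solution) together with the residue computation for $M(\la)=\Phi'(0,\la)$; the Hermitian property $Q=Q^{\dagger}$, $H=H^{\dagger}$, the membership of $L$ in the general form (note $THT=H$), and the uniqueness of the pair $(Q,H)$ producing $\mathcal S$ follow from the adjoint-problem argument of Section~6 and the uniqueness theorem for Inverse Problem~\ref{ip:matr} (\cite{Xu19,Bond19-alg}). The dependence of $C$ in~\eqref{estQH} and of $\de$ only on $\tilde L$ is then read off from the chain of estimates.

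The one step that is more than bookkeeping is that the proof of Lemma~\ref{lem:VPhi} (through Proposition~\ref{prop:basis}) requires \textsc{Completeness} of $\mathscr F$ for $\mathcal S$, which is not part of the definition of $SD(\tilde{\mathcal S})$. I would deduce it from the fact that $\tilde{\mathscr F}$, being attached to the genuine problem $\tilde L$, is a Riesz basis of $L_2((0,\pi);\mathbb C^m)$ (cf.~\cite{Bond19-sp}): membership in $SD(\tilde{\mathcal S})$ forces $\rho_{nk}-\tilde\rho_{nk}=(\varkappa_{nk}-\tilde\varkappa_{nk})/n$ with $\{\varkappa_{nk}\},\{\tilde\varkappa_{nk}\}\in l_2$ (see~\eqref{asymptla}), and $\Xi\le\de$ makes the grouped weight matrices of $\mathcal S$ close to those of $\tilde L$, so the vectors $\{\mathcal E_{nk}\}$ may be chosen close to $\{\tilde{\mathcal E}_{nk}\}$ and $\mathscr F$ becomes a compact --- and, after a further shrinking of $\de$, an arbitrarily small --- perturbation of $\tilde{\mathscr F}$; hence $\mathscr F$ remains complete and the system in Proposition~\ref{prop:basis}(ii) remains minimal, by the stability of Riesz bases. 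This completeness transfer, together with making every estimate of Sections~4--6 explicitly proportional to $\Xi$, is the main obstacle; the rest merely repeats the sufficiency proof.
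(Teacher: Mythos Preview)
Your approach coincides with the paper's: choose $\de$ so that Proposition~\ref{prop:estR} forces $\|\tilde R(x)\|_{B\to B}\le\tfrac12$, invert $\mathcal I+\tilde R(x)$ by a Neumann series (bypassing the Fredholm/\textsc{Completeness} argument of Lemma~\ref{lem:main}), and then invoke the machinery of Section~6 together with Lemma~\ref{lem:eps} to build $Q,H$ and obtain~\eqref{estQH}. The paper's proof is considerably terser than yours---after obtaining invertibility it simply writes ``Relying on the results of Section~6, we conclude\ldots'' and reads off~\eqref{estQH} from $\|\eps\|_{L_2}\le C\Xi$.

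The one place you go beyond the paper is your last paragraph on \textsc{Completeness}. You are right that Lemma~\ref{lem:VPhi} (via Proposition~\ref{prop:basis}) invokes completeness of $\mathscr F$ for $\mathcal S$ and minimality of $\{\tilde S_{nk0}\mathcal E_{nk}\}$, and that membership in $SD(\tilde{\mathcal S})$ does not list this as a hypothesis. The paper's proof does not address this point at all; it simply cites Section~6 wholesale and then remarks, after the proof, that ``the advantage of Theorem~\ref{thm:loc} is that \textsc{Completeness} condition is not required.'' Your Riesz-basis perturbation argument (using that $\tilde{\mathscr F}$ is a Riesz basis for the genuine problem $\tilde L$, that~\eqref{asymptla} with identical coefficients gives $\rho_{nk}-\tilde\rho_{nk}=o(n^{-1})$ in $l_2$, and that for small $\Xi$ the ranges $\Ran(\al_{nk})$ are close to $\Ran(\tilde\al_{nk})$ so the $\mathcal E_{nk}$ can be aligned with $\tilde{\mathcal E}_{nk}$) is a legitimate way to make explicit what the paper leaves implicit. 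In short: same proof, but you have spotted and patched a step the paper glosses over.
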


\begin{proof}
Fix $\de_0$ so that, for every $\mathcal S \in SD(\tilde {\mathcal S})$ satisfying $\Xi \le \de_0$, we have $G_n \cap G_k = \varnothing$, $n \ne k$, $n, k \ge 1$. By using the partition $\{ G_k \}_{k \ge 1}$, we construct the Banach space $B$ and the operator $\tilde R(x)$, as it was described in Section~2. Then the estimates of Proposition~\ref{prop:estR} hold with a constant $C$ depending only on $\tilde L$, $n_0$ and $\de_0$ and independent of $\mathcal S$ and $x$. Consequently, one can choose $\de \in (0, \de_0]$ such that $\Xi \le \de$ implies $\| \tilde R(x) \|_{B \to B} \le \frac{1}{2}$. Then the operator $(\mathcal I + \tilde R(x))$ has a bounded inverse in $B$. Hence the main equation~\eqref{main} constructed by $\tilde L$ and $\mathcal S$ is uniquely solvable. Relying on the results of Section~6, we conclude that $\mathcal S$ is the spectral data of a unique boundary value problem $L = L(Q(x), T, H)$.
The estimates~\eqref{estQH} easily follow from~\eqref{defQH} and the estimate $\| \eps \|_{L_2} \le C \Xi$ of Lemma~\ref{lem:eps}.
\end{proof}

Let us compare Theorems~\ref{thm:loc} and~\ref{thm:sc}. Theorem~\ref{thm:loc} has a local nature, while Theorem~\ref{thm:sc} establishes global solvability of Inverse Problem~\ref{ip:matr}. However, the advantage of Theorem~\ref{thm:loc} is that \textsc{Completeness} condition is not required. 

Theorem~\ref{thm:loc} yields the following corollary.
Let $\{ \la_{nk}^0, \al_{nk}^0 \}_{n \ge 1, \, k = \overline{1, m}}$ be the spectral data of the problem $L(\tfrac{2}{\pi} \tilde \Omega, T, \tilde H)$. For every $N \in \mathbb N$, define the data $\mathcal S^N = \{ \la_{nk}^N, \al_{nk}^N \}_{n \ge 1, \, k = \overline{1, m}}$, $N \in \mathbb N$, as follows
$$
\la_{nk}^N = \begin{cases}
                \tilde \la_{nk}, \quad n \le N, \\
                \la_{nk}^0, \quad n > N,
             \end{cases}
\al_{nk}^N = \begin{cases}
                \tilde \al_{nk}, \quad n \le N, \\
                \al_{nk}^0, \quad n > N.
             \end{cases}
$$

Obviously, $\mathcal S^N \in SD(\tilde {\mathcal S})$ for sufficiently large $N$.

\begin{cor} \label{cor:locN}
Let $\tilde L = L(\tilde Q(x), T, \tilde H)$ be a fixed boundary value problem in the general form, and let $\tilde {\mathcal S}$ be the spectral data of $\tilde L$. Then for sufficiently large $N$ the data $\mathcal S^N$ is the spectral data of a certain boundary value problem $L^N = L(Q^N(x), T, H^N)$ in the general form, and
$$
\lim_{N \to \iy} \| Q^N - \tilde Q \|_{L_2} = 0, \quad \lim_{N \to \iy} \| H^N - \tilde H \| = 0.
$$
\end{cor}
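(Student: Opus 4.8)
The plan is to deduce the corollary from the local solvability and stability Theorem~\ref{thm:loc}. Since $\tilde L$ is fixed, that theorem supplies a constant $\de > 0$, so it is enough to prove that the quantity $\Xi = \Xi_N$ associated with the data $\mathcal S^N$ tends to $0$ as $N \to \iy$ (in particular $\Xi_N \le \de$ for $N$ large): then Theorem~\ref{thm:loc} furnishes a unique $L^N = L(Q^N(x), T, H^N)$ in the general form with spectral data $\mathcal S^N$, and the estimates~\eqref{estQH} give $\| Q^N - \tilde Q \|_{L_2} \le C \Xi_N$, $\| H^N - \tilde H \| \le C \Xi_N$, whence the asserted limits follow by letting $N \to \iy$. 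First I would record that $\mathcal S^N \in SD(\tilde {\mathcal S})$ for $N$ large: the auxiliary problem $L^0 := L(\tfrac{2}{\pi} \tilde \Omega, T, \tilde H)$ has the same projector $T$ and the same matrices $\Omega = \tilde \Omega$ and $H = \tilde H$ entering~\eqref{defP12} and the matrix $\Theta$ of Proposition~\ref{prop:asymptal}, hence the same $p$, $\{ z_k \}_{k=1}^m$ and $\{ A^{(s)} \}_{s = 1}^m$ as $\tilde L$; consequently both $\tilde {\mathcal S}$ and the spectral data $\mathcal S^0 = \{ \la_{nk}^0, \al_{nk}^0 \}$ of $L^0$ lie in $SD(\tilde {\mathcal S})$, and the collection $\mathcal S^N$, obtained by gluing the two along $n = N$, lies in $SD(\tilde {\mathcal S})$ once $N$ is large enough that the blocks $\{ n \le N \}$ and $\{ n > N \}$ are asymptotically separated (needed for Item~2 in the definition of $SD$ and for the asymptotics to persist). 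In particular $\Xi^0 := \Xi(\mathcal S^0, \tilde {\mathcal S})$ is finite by~\eqref{defXi}.

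The core of the argument is the estimate $\Xi_N \to 0$. I would form the partition $\{ G_k \}_{k \ge 1}$ of the square roots $\{ \rho_{nks}^N \}$ attached to $\mathcal S^N$, with the same $n_0$ as for $\{ \tilde G_k \}$, and, exploiting the freedom in the choice of the subpartitions allowed in Section~7, take each $G_{ki}$ to be a single pair $\{ \rho_{lj0}^N, \rho_{lj1}^N \}$. If $G_k$ is built only from indices $l \le N$, then $\la_{lj0}^N = \la_{lj1}^N = \tilde \la_{lj}$ and $\al_{lj0}^N = \al_{lj1}^N$; for $N$ large the maximal groups of equal eigenvalues of $\mathcal S^N$ coincide with those of $\tilde {\mathcal S}$ on this block, so the reduction producing the primed weights acts identically and the $s=0$ and $s=1$ primed weights coincide there. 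Hence each of the three terms of~\eqref{defxi} vanishes and $\xi_k = 0$; this covers all $k \le 2(N - n_0) + 1$. For the remaining $k$, the multiset $G_k$, the primed weights attached to its elements, the reference multiset $\tilde G_k$ and the model problem $\tilde L$ are exactly those arising from $\mathcal S^0$. Since $\Xi^2 = \sum_k (k \xi_k)^2$ is minimized by minimizing each $\xi_k$ separately, the minimizing value satisfies $\xi_k^N = 0$ for $k \le 2(N - n_0) + 1$ and $\xi_k^N = \xi_k^0$ otherwise; therefore $\Xi_N^2 = \sum_{k > 2(N - n_0) + 1} (k \xi_k^0)^2$, a tail of the convergent series $\sum_k (k \xi_k^0)^2 = (\Xi^0)^2$, which tends to $0$ as $N \to \iy$.

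With $\Xi_N \to 0$ established, the conclusion is immediate from Theorem~\ref{thm:loc}. I expect the main obstacle to be the bookkeeping in the middle paragraph: one must verify that for $N$ large the blocks $\{ l \le N \}$ and $\{ l > N \}$ do not interact, so that the multiplicity pattern (hence the primed weights $\al'$), the grouping $\{ G_k \}_{k \ge 1}$, and all three sums in~\eqref{defxi} split cleanly at $n = N$, and that the term $k^{-2}\| \al(G_k) - \tilde \al(G_k) \|$, which is independent of the subpartition, genuinely vanishes on the small groups. Everything else reduces to the routine check that $\mathcal S^N$ satisfies the hypotheses of Theorem~\ref{thm:loc}.
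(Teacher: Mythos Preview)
Your argument is correct and is exactly the approach the paper intends: the paper does not give a separate proof of the corollary but simply states that it follows from Theorem~\ref{thm:loc}, and your computation that $\Xi_N^2$ is the tail beyond $k = 2(N - n_0) + 1$ of the convergent series $\sum_k (k\xi_k^0)^2$ is the natural way to make that deduction explicit. The bookkeeping you flag (clean splitting of the multiplicity groups and of the primed weights across $n = N$ for large $N$) is routine given the asymptotics~\eqref{asymptla} and is the only thing one must check.
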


Corollary~\ref{cor:locN} is important for numerical solution of Inverse Problem~\ref{ip:matr}, since only a finite number of spectral data $\{ \la_{nk}, \al_{nk} \}_{n = \overline{1, N}, \, k = \overline{1, m}}$ is usually available in practice. 

Theorem~\ref{thm:loc} also yields the following corollary.

\begin{thm} \label{thm:locg}
Let $\tilde L = L(\tilde Q(x), T, \tilde H)$ is a fixed boundary value problem in the graph case, and let $\tilde {\mathcal S}$ be the spectral data of $\tilde L$. Then there exists $\de > 0$ depending only on $\tilde L$ such that for any data $\mathcal S \in SD(\tilde {\mathcal S})$, such that $\Xi \le \de$ and the solution of the main equation~\eqref{main} is diagonal, there exists a unique real-valued matrix function $Q(x) = \diag\{ q_j(x) \}_{j = 1}^m \in L_2((0, \pi); \mathbb C^{m \times m})$ such that the problem $L = L(Q(x), T, H)$, $H = \tilde H$, has the spectral data $\mathcal S$, and
\begin{equation} \label{estqj}
\| q_j - \tilde q_j \|_{L_2} \le C \Xi, \quad j = \overline{1, m}.
\end{equation}
The constant $C$ in the estimate~\eqref{estqj} depends only on $\tilde L$, $\de$ and not  on a particular choice of $\mathcal S$.
\end{thm}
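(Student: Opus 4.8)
\medskip

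\noindent\textbf{Proof proposal.}
The plan is to obtain Theorem~\ref{thm:locg} as a specialization of Theorem~\ref{thm:loc}, adding the diagonality bookkeeping from Section~6. First I would regard the fixed graph-case problem $\tilde L = L(\tilde Q(x), T, \tilde H)$ as a boundary value problem in the general form and apply Theorem~\ref{thm:loc}: this furnishes $\de > 0$, depending only on $\tilde L$, such that any $\mathcal S \in SD(\tilde{\mathcal S})$ with $\Xi \le \de$ is the spectral data of a unique problem $L = L(Q(x), T, H)$ with $Q = Q^{\dagger} \in L_2((0,\pi);\mathbb C^{m\times m})$, $H = H^{\dagger}$, satisfying the estimates~\eqref{estQH}. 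In particular, the main equation~\eqref{main} built from $\tilde L$ and $\mathcal S$ is uniquely solvable, and $Q$, $H$ arise from its solution $\psi(x)$ via~\eqref{defQH}. Uniqueness of a \emph{diagonal} $Q$ with the required properties is then automatic from the uniqueness of the pair $(Q,H)$.

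Next I would feed in the diagonality hypothesis on $\psi(x)$. Since $\tilde L$ is in the graph case, $\tilde Q$ is diagonal, hence $\tilde S(x,\la)$ and $\tilde D$ are diagonal; and as $\psi(x)$ is assumed diagonal, the matrices $S_{ljs}(x)$ (the values of $\psi$ at the points $\rho_{ljs}$) are diagonal for all $l,j,s$. Then~\eqref{defS} shows that $S(x,\la)$ is diagonal, and~\eqref{eqvS} forces $Q(x)$ to be diagonal; writing $Q(x) = \diag\{q_j(x)\}_{j=1}^m$, Hermiticity of $Q$ gives that each $q_j$ is real-valued and lies in $L_2(0,\pi)$. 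This is precisely the lemma at the end of Section~6, applied with the present $\tilde L$ in the role of the canonical model problem.

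For the boundary coefficient, I would use that $\rank(T) = 1$ in the graph case, so the Hermitian $H = THT$ has the form $H = hT$ with $h \in \mathbb R$, and likewise $\tilde H = \tilde h T$. As recorded in the proof of Theorem~\ref{thm:scg}, $h$ is uniquely determined by the numbers $\{z_j\}_{j=1}^m$ through~\eqref{defz1} and~\eqref{defP2}, and the relation~\eqref{eqcoef} defining $SD(\tilde{\mathcal S})$ gives $z_j = \tilde z_j$ for all $j$; hence $h = \tilde h$, i.e.\ $H = \tilde H$. Finally, since $Q - \tilde Q = \diag\{q_j - \tilde q_j\}_{j=1}^m$, the matrix norm $\|\cdot\|_{L_2}$ collapses to $\|Q - \tilde Q\|_{L_2} = \max_{1\le j\le m}\|q_j - \tilde q_j\|_{L_2}$, so the estimate~\eqref{estqj} is an immediate consequence of the first estimate in~\eqref{estQH}, with $C$ inheriting its dependence on $\tilde L$ and $\de$.

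The analytic core of the argument---small-perturbation solvability of the main equation, the $L_2$-convergence of the series~\eqref{defeps}, and Hermiticity of the recovered coefficients---is already contained in Theorem~\ref{thm:loc} and Section~6, so I do not expect a serious obstacle here. The two points that do require care are checking that the diagonality of $\psi(x)$ genuinely propagates through~\eqref{defS} to $S(x,\la)$ and hence to $Q$, and confirming that the graph-case $\tilde L$ may legitimately replace the canonical model problem in the Section~6 constructions; both of these are routine verifications rather than substantial difficulties.
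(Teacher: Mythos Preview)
Your overall strategy is correct and matches the paper's: Theorem~\ref{thm:locg} is obtained as a corollary of Theorem~\ref{thm:loc}, invoking the diagonality lemma of Section~6 and the observation that $H=\tilde H$ follows from the asymptotics (equivalently, from \eqref{eqcoef}). The estimate~\eqref{estqj} indeed drops out of~\eqref{estQH} via the definition of $\|\cdot\|_{L_2}$.

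There is, however, one genuine gap in the step where you pass from diagonality of $\psi(x)$ to diagonality of $S(x,\la)$. You write that ``\eqref{defS} shows that $S(x,\la)$ is diagonal'' because $\tilde S$, $\tilde D$, and the $S_{ljs}$ are diagonal. But the weight matrices $\al'_{ljs}$ appearing in~\eqref{defS} are \emph{not} diagonal in general (they are merely Hermitian non-negative), and a product $D_1 \al' D_2$ with $D_1,D_2$ diagonal is diagonal only if $\al'$ is. So~\eqref{defS} alone does not force $S(x,\la)$ to be diagonal. The paper's lemma at the end of Section~6 gets around this by a different route: from~\eqref{relSlj0} one knows that $S(x,\la_{lj0})=S_{lj0}(x)$ is diagonal for all $l,j$; then the asymptotics~\eqref{asymptla} and~\eqref{asymptS} together with an interpolation argument (the entire function $\rho S(x,\rho^2)$ has controlled exponential type, and its off-diagonal entries vanish on the sequence $\{\rho_{lj0}\}$) force the off-diagonal entries of $S(x,\la)$ to vanish identically. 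You should replace the direct appeal to~\eqref{defS} by this interpolation argument; once that is done, the rest of your proposal goes through exactly as written.
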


Theorem~\ref{thm:locg} asserts local solvability and stability of Inverse Problem~\ref{ip:graph}.
It follows from the proof of Theorem~\ref{thm:loc}, that for sufficiently small $\Xi$ the main equation~\eqref{main} is automatically solvable. Therefore we do not need to require \textsc{Solvability} condition in Theorem~\ref{thm:locg}, so only \textsc{Diagonality} is additionally required. The equality $H = \tilde H$ in the graph case follows from the spectral data asymptotics.

\medskip

{\bf Acknowledgement.} This work was supported by Grant 19-71-00009 of the Russian Science Foundation.

\medskip

\noindent Natalia Pavlovna Bondarenko \\
1. Department of Applied Mathematics and Physics, Samara National Research University, \\
Moskovskoye Shosse 34, Samara 443086, Russia, \\
2. Department of Mechanics and Mathematics, Saratov State University, \\
Astrakhanskaya 83, Saratov 410012, Russia, \\
e-mail: {\it BondarenkoNP@info.sgu.ru}

\end{document}